\theoremstyle{plain}
\newtheorem{proposition}{Proposition}[section]
\newtheorem{theorem}[proposition]{Theorem}
\newtheorem*{theoremst}{Theorem}
\newtheorem{lemma}[proposition]{Lemma}
\newtheorem{corollary}[proposition]{Corollary}
\newtheorem*{corollaryst}{Corollary}
\theoremstyle{definition}
\newtheorem{definition}[proposition]{Definition}
\theoremstyle{remark}
\newtheorem{remark}[proposition]{Remark}
\newtheorem{example}[proposition]{Example}
\DeclareMathOperator{\Supp}{Supp}
\DeclareMathOperator{\Conv}{Conv}
\DeclareMathOperator{\Aut}{Aut}
\DeclareMathOperator{\proj}{proj}
\DeclareMathOperator{\Haar}{Haar}
\newcommand{\tld}{\widetilde}
\DeclareMathOperator{\Csph}{\mathscr C_{\mathrm {sph}}}
\DeclareMathOperator{\Rsph}{\mathrm {Res}_{\mathrm {sph}}}
\newcommand{\eps}{\varepsilon}
\newcommand{\C}{\mathbb{C}}                          
\newcommand{\N}{\mathbb{N}}                          
\newcommand{\Z}{\mathbb{Z}}                          
\newcommand{\PR}{\mathbf{P}}
\author{Jean L\'ecureux\footnote{Universit{\'e} de Lyon;
Universit{\'e} Lyon 1;
INSA de Lyon;
Ecole Centrale de Lyon;
CNRS, UMR5208, Institut Camille Jordan,
43 blvd du 11 novembre 1918,
F-69622 Villeurbanne-Cedex, France.  lecureux@math.univ-lyon1.fr}}
\date{}
\begin{document}
\title{Amenability of actions on the boundary of a building }
\maketitle
\begin{abstract}
We prove that the action of the automorphism group of a building on its boundary is topologically amenable. The notion of boundary we use was defined in a previous paper \cite{CL}. It follows from this result that such groups have property (A), and thus satisfy the Novikov conjecture. It may also lead to applications in rigidity theory. 
\end{abstract}

\section*{Introduction}
The notion of an amenable action was first defined by R. Zimmer \cite{Zim}. He generalized the notion of amenability for a group \textit{via} a fixed point property: his definition of an amenable action on some space $S$ implies some fixed point property for a bundle  of compact convex subspaces over $S$. 

One of the interests in introducing this notion is to explain and generalize the existence of equivariant maps from $S$ to the set of probability measures on any compact metric $G$-space. These maps were used by Furstenberg and then Margulis to prove rigidity theorems \cite{Mar}.

Amenability of group actions has been intensively studied since then, sometimes in other contexts, for example in the study of bounded cohomology \cite{Mon} or of $C^*$-algebras \cite{HR}. As in the case of amenability of groups, there are many definitions of amenability of actions. Unfortunately, not all of them are equivalent. The notion we use, which we recall in Section \ref{amenability}, is a topological notion, whereas Zimmer's notion is a measured one. Namely, we call an action of a group $G$ on a space $X$ \emph{amenable} if there exists a sequence of continuous, almost equivariant maps from $X$ to $\PR(G)$, where $\PR(G)$ is the set of probability measures in $G$ -- see Section \ref{amenability} for a more precise definition. This topological notion of amenability is stronger than amenability in the sense of Zimmer.

The argument which was first used to prove that some actions are amenable is the following: if $P$ is an amenable subgroup of some group $G$, then $G/P$ is an amenable $G$-space \cite[Proposition 4.3.2]{Zimmer}. In particular, this is true when $G$ is a real semisimple Lie group and $P$ a minimal parabolic subgroup of $G$. However, the question of knowing whether an action is amenable is more delicate when the space acted upon is not homogeneous.\\

Buildings are combinatorial and geometric objects introduced by J. Tits. We recall their definitions and some basic properties in Section \ref{buildings}. Tits' aim was to describe the structure of semisimple algebraic  groups; he proved that these groups act on spherical buildings. These buildings are constructed by gluing tesselations of spheres.
Later on, Bruhat and Tits \cite{BT,BT2} proved that, in the case of a semisimple group over a local field, this construction could be generalized: to such a group is associated an affine building, constructed by gluing  tesselations of Euclidean planes.

However, there are many other types of buildings. Even for affine buildings, there are some constructions of buildings that do not come from an algebraic group \cite{Bar,CMSZ2,Ron}. Some of these buildings have a cocompact automorphism group, some have no automorphism at all. One can also construct buildings from many types of Coxeter groups, instead of just spherical and affine ones. For example, there are some constructions of Fuchsian buildings \cite{GaP,Bou}, which are gluing of tesselations of the hyperbolic plane. Some other buildings with non-affine tessellations come from the so-called Kac-Moody groups \cite{Ti}.

It is an interesting question to know how close these groups are from classical algebraic groups. It has been proven that they share some properties. For example, the article \cite{Bou} proves some Mostow rigidity for groups acting on Fuchsian buildings. Kac-Moody groups have been proven to have the Normal Subgroup Property (any non-trivial normal subgroup is of finite index, or is finite and central) \cite{BS06}. However, Kac-Moody groups are not linear \cite{CR}, which makes them of course quite different from algebraic groups.

In the case of a $p$-adic semisimple Lie group $G$, if $P$ is a minimal subgroup, we know that the action of $G$ on $G/P$ is amenable. In this setting, the set $G/P$ has a combinatorial interpretation: it is the set of chambers of the spherical building at infinity of the affine building associated to $G$: an element of $G/P$ can be seen as an equivalence class of sectors \cite[11.8]{AB}.
 The construction of this building at infinity can be described purely in terms of the geometry of the building, and therefore also makes sense for affine buildings that do not come from a $p$-adic group. It is therefore natural to wonder whether the action of the automorphism group of such a building on the set of chambers of the spherical building at infinity is amenable or not. 
In the case of buildings of type $\widetilde A_2$, this  has been proved to be true in \cite[section 4.2]{RS}.

For arbitrary, non necessarily affine buildings, it is not really obvious to know which boundary should be considered. One could for example consider the visual boundary of a building, seen as a CAT(0) space. However, this boundary does not have nice combinatorial properties, as in the affine case, and the action of the group on the visual boundary is not amenable. In a previous paper \cite{CL}, we defined a notion of combinatorial compactification, denoted $\Csph(X)$, for any building $X$. This construction generalizes a previous construction for affine buildings \cite{GuR}. In the affine case, this compactification contains not only the equivalence classes of sectors, but also equivalence classes of ``chemin\'ees'' (``chimney'' in english), defined in \cite{Rou}, which are some kind of half-strips in apartments. In particular, we generalize the notion of sector (including the chemin\'ees) to arbitrary buildings.

In this context, we prove:

\begin{theoremst}\label{moyen}
Let $X$ be any building, and $G$ a locally compact subgroup of $\Aut(X)$ acting properly on $X$. Then the action of $G$ on the combinatorial compactification of $X$ is amenable.
\end{theoremst}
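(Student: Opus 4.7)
The plan is to exhibit continuous, almost equivariant maps $\mu_n : \Csph(X) \to \PR(G)$, in the sense of the topological definition of amenability recalled in Section~\ref{amenability}. A first reduction replaces $\PR(G)$ by probability measures on chambers. Fix a base chamber $c_0 \in X$ and let $K = \mathrm{Stab}_G(c_0)$; properness of the action of $G$ on $X$ makes $K$ compact, since chambers are open cells. Pushing forward the normalized Haar measure on each coset $gK$ along the orbit map $gK \mapsto gc_0$ yields a $G$-equivariant isometric embedding $\PR(G\cdot c_0) \hookrightarrow \PR(G)$; hence it suffices to build continuous, almost equivariant maps $\nu_n : \Csph(X) \to \PR(G\cdot c_0)$.

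By \cite{CL}, each $\xi \in \Csph(X)$ comes equipped with combinatorial projections $\proj_\xi$ and is represented by a generalized sector (a sector in the affine case, a chimney in general) based at any chosen chamber. Let $S(c_0,\xi)$ be such a sector from $c_0$, and $S_n(c_0,\xi)$ a finite initial piece of $S(c_0,\xi)$ of size tending to infinity with $n$ --- the natural candidate being the intersection with a combinatorial ball about $c_0$. I would set
\[
\nu_n(\xi) \;=\; \frac{1}{|S_n(c_0,\xi)|}\sum_{c \in S_n(c_0,\xi)} \delta_c.
\]
Continuity of $\nu_n$ should be immediate from the topology on $\Csph(X)$: the map $\xi \mapsto \proj_\xi(c_0)$ is continuous with discrete target, hence locally constant, so $\xi \mapsto S_n(c_0,\xi)$ is locally constant for each fixed $n$.

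For almost equivariance, observe that $g \cdot S(c_0,\xi) = S(gc_0,g\xi)$, so $\nu_n(g\xi) - g_\ast \nu_n(\xi)$ measures the discrepancy between the uniform distributions on $S_n(c_0, g\xi)$ and on $S_n(gc_0, g\xi) := g \cdot S_n(c_0,\xi)$ --- two initial sectors toward the same boundary point $g\xi$ based at $c_0$ and at $gc_0$ respectively. The central combinatorial input I expect to be the main obstacle is a F\o lner-type estimate
\[
\frac{\lvert S_n(c_0,\eta)\,\triangle\, S_n(gc_0,\eta)\rvert}{\lvert S_n(c_0,\eta)\rvert} \xrightarrow[n\to\infty]{} 0
\]
uniformly in $\eta \in \Csph(X)$ and in $g$ ranging over an arbitrary compact subset of $G$. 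What is needed is a stability property of the compactification: two sectors pointing to a common point of $\Csph(X)$ must eventually lie in a common apartment and coincide there up to a bounded shift.

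In the affine case this reduces to Euclidean parallelism of Weyl sectors and can be read off from the geometry of apartments. For buildings of general Coxeter type (Fuchsian, Kac--Moody, etc.) combinatorial sectors can be exponentially thick and two metric geodesics toward the same visual point need not stay close, so the estimate must be derived purely combinatorially from the fine behavior of the projections $\proj_\xi$ established in \cite{CL}. Once this estimate is in place, composing the maps $\nu_n$ with the embedding $\PR(G \cdot c_0) \hookrightarrow \PR(G)$ produces the desired $\mu_n$ and completes the proof.
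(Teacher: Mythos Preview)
Your proposal has a concrete failure, not just a missing estimate. The bordification $\Csph(X)$ contains $\Rsph(X)$ as a dense open subset, and your maps $\nu_n$ must be almost equivariant there as well. But for $\xi\in\Rsph(X)$ the sector $Q(c_0,\xi)=\Conv(c_0,\xi)$ is bounded, so $S_n(c_0,\xi)=Q(c_0,\xi)$ once $n$ exceeds its diameter, and $\nu_n(\xi)$ becomes the \emph{fixed} uniform measure on $\Conv(c_0,\xi)$, independent of $n$. Take for instance $\xi=g^{-1}c_0$ with $gc_0\neq c_0$: then $\nu_n(g\xi)=\nu_n(c_0)=\delta_{c_0}$, whereas $g_*\nu_n(\xi)$ is uniform on $g\cdot\Conv(c_0,g^{-1}c_0)=\Conv(gc_0,c_0)$, a set with at least two elements. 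Hence $\|\nu_n(g\xi)-g_*\nu_n(\xi)\|\geqslant 1$ for all large $n$, and asymptotic equivariance fails. The paper sidesteps this by using Ces\`aro averages of annuli $Z(x,\xi,n,k)$ centred around ``the point of $Q(x,\xi)$ at distance $n$ from $x$''; when $\xi$ lies at finite distance these collapse to $\{\xi\}$, so that $\lambda_n(x,\xi)$ and $\lambda_n(x',\xi)$ both converge to $\delta_\xi$ and hence to each other.

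Even restricted to the genuine boundary, the F\o lner estimate you flag as the main obstacle is not supplied, and for non-affine $W$ a direct combinatorial proof is far from clear: the shape of $Q(c_0,\xi)$ inside an apartment can be irregular, and ``bounded shift'' has no obvious meaning outside the affine case. The paper's route is different in kind. It embeds the Coxeter complex $\Sigma$ $W$-equivariantly into a finite product of trees $T_1\times\dots\times T_l$ (Section~\ref{arbres}), under which every sector lands in a product of segments and half-lines; there Kaimanovich's sandwich lemma (Lemma~\ref{Ka}) applies directly and yields the required convergence with explicit bounds. The resulting measures are then pushed back to $Q(x,\xi)\subset\Sigma$ by an explicit norm-one operator $S_{Q(x,\xi)}$, and finally transported from apartments to the building via retractions, using that the ``position'' of a chamber inside a sector is apartment-independent (Remark~\ref{position}). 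Each of these three steps---the tree embedding, the push-down operator, and the gluing across apartments---is doing real work that your sketch does not replace.
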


 In particular, we prove that closed subgroups of reductive groups over local fields have an amenable action on this combinatorial boundary (which, as explained above, contains $G/P$ as a closed subset). Similarly, Kac-Moody groups, which act on a product of buildings $X_+\times X_-$, and their complete versions \cite{RR}, which act on only one of these two buildings, admit an amenable action on this combinatorial boundary.

From the point of view of the study of a group, the existence of some amenable actions has many applications. First of all, the existence of some compact space $X$ with an amenable action of a discrete group $G$ implies a metric property on $G$, namely Property (A) \cite{HR}. This property was proved in \cite{Yu} to imply that there exists an uniform embedding of $G$ into a Hilbert space. He also proves that this in turn implies the coarse Baumes-Connes conjecture and the Novikov conjecture for $G$.  As a consequence of our result, we get the following corollary, valid for every building:

\begin{corollaryst}
 Let $X$ be a locally finite building and $G=\Aut(X)$. Let $\Gamma$ be a closed, discrete subgroup of $G$. Then $\Gamma$ satisfies the Novikov conjecture.
\end{corollaryst}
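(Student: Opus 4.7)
The plan is to deduce the corollary directly from Theorem \ref{moyen} together with the two implications already cited in the introduction: amenable action $\Rightarrow$ Property (A) (Higson--Roe \cite{HR}), and Property (A) $\Rightarrow$ Novikov conjecture (Yu \cite{Yu}). So the real content is to check that Theorem \ref{moyen} is applicable to the pair $(\Gamma, X)$.

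First I would verify that the hypotheses of Theorem \ref{moyen} are met by $\Gamma$. Since $X$ is locally finite, the pointwise stabilizer of any ball in $\Aut(X)$ (equipped with the compact-open topology) is compact, so $G=\Aut(X)$ is a locally compact group acting properly on $X$. A closed subgroup of a locally compact group is locally compact, and the restriction of a proper action to a closed subgroup is still proper; thus $\Gamma$ is locally compact and acts properly on $X$. Theorem \ref{moyen} then yields that the action of $\Gamma$ on the combinatorial compactification $\Csph(X)$ is topologically amenable.

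Next I would observe that $\Csph(X)$ is compact and Hausdorff by its very construction in \cite{CL}, and that $\Gamma$ is discrete by assumption. The Higson--Roe criterion says that if a countable discrete group admits a topologically amenable action on a compact Hausdorff space, then it has Property (A); this applies to $\Gamma$. (Local finiteness of $X$ together with discreteness of $\Gamma$ in the separable group $\Aut(X)$ ensures that $\Gamma$ is countable, so no set-theoretic issue arises.) Finally, Yu's theorem \cite{Yu} asserts that groups with Property (A) admit a uniform embedding into Hilbert space and hence satisfy the Novikov conjecture, which gives the statement for $\Gamma$.

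There is no serious obstacle here, since all the hard work sits inside Theorem \ref{moyen}; the only point that deserves a line of justification is the properness of the $\Aut(X)$-action on the locally finite building $X$, which is why I would spell it out in the first step.
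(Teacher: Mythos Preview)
Your proposal is correct and follows exactly the route the paper takes: the paper does not give a formal proof of this corollary but explains in the introduction that it follows from Theorem~\ref{moyen} via Higson--Roe \cite{HR} (amenable action on a compact space $\Rightarrow$ Property~(A)) and Yu \cite{Yu} (Property~(A) $\Rightarrow$ Novikov). Your write-up is in fact more detailed than the paper's, spelling out why $\Gamma$ satisfies the hypotheses of the main theorem and why $\Csph(X)$ is compact, which is all to the good.
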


This corollary can also be deduced by combining \cite{KaS}, together with the fact that  buildings  are complete, geodesic and bolic spaces \cite[Corollary 8]{BKa}.

These applications only require the existence of some compact set $K$ on which $G$ acts amenably. There are some other kinds of maps in which the nature of $K$ may be useful. For example, \cite[Theorem 5.7.1]{Mon} allows us to find some resolution of a coefficient Banach module $E$ which (at least theoretically) calculates the continuous bounded cohomology of $G$ with values in $E$.

 The amenability of some actions is also a crucial point in the proof of superrigidity theorems \`a la Margulis \cite{Mar,ACB,Zimmer}. They imply the existence of ``boundary maps''\cite[Proposition 4.3.9]{Zimmer}. Thus, we get:

\begin{corollaryst}
 Let $\Gamma$ be a closed subgroup of the automorphisms of $X$. Let $K$ be a compact metric $\Gamma$-space. Then there exists a $\Gamma$-equivariant map from $\Csph(X)$ to the set $\PR(K)$ of probability measures on $K$.
\end{corollaryst}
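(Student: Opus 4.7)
My plan is to derive the corollary directly from Theorem \ref{moyen}, via the classical principle (due to Zimmer) that an amenable action on a space $S$ forces the existence of equivariant maps $S \to \PR(K)$ for every compact metric $\Gamma$-space $K$.

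By Theorem \ref{moyen}, the action of $\Gamma$ on $\Csph(X)$ is topologically amenable, so there is a sequence of continuous, almost $\Gamma$-equivariant maps $f_n : \Csph(X) \to \PR(\Gamma)$; concretely, $\|g_* f_n(\xi) - f_n(g \xi)\|$ tends to $0$ uniformly in $\xi \in \Csph(X)$ on compact subsets of $g \in \Gamma$. Pick any $\mu_0 \in \PR(K)$ (which exists since $K$ is nonempty) and define continuous maps $F_n : \Csph(X) \to \PR(K)$ by
\[
F_n(\xi) = f_n(\xi) * \mu_0 = \int_\Gamma g_* \mu_0 \, df_n(\xi)(g).
\]
Since convolution intertwines left-translation on $\Gamma$ with the $\Gamma$-action on $\PR(K)$, the maps $F_n$ inherit almost equivariance from the $f_n$.

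The remaining step is to pass from almost equivariant to genuinely equivariant. Fix a quasi-invariant probability measure $\nu$ on the compact space $\Csph(X)$ (e.g.\ obtained by averaging any Borel probability against an absolutely continuous probability on $\Gamma$). The space of weak-$*$ measurable maps $\Csph(X) \to \PR(K)$, modulo $\nu$-null sets, sits inside the unit ball of the dual of the separable Banach space $L^1(\nu, C(K))$, hence is weak-$*$ compact by Banach-Alaoglu. Any weak-$*$ accumulation point $F$ of $(F_n)$ is a measurable map, and the almost equivariance of $F_n$ passes to the limit to yield exact $\Gamma$-equivariance $\nu$-almost everywhere, producing the desired map.

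The main subtlety is simply to observe that the topological notion of amenability used in Theorem \ref{moyen} implies the measure-theoretic notion used by Zimmer, for any quasi-invariant $\nu$ on $\Csph(X)$: the continuous almost equivariant maps $f_n$ directly furnish an approximate invariant mean in Zimmer's sense. Once this is noted, the entire argument above is packaged as \cite[Proposition 4.3.9]{Zimmer}, and no further building-theoretic input is required beyond Theorem \ref{moyen} itself.
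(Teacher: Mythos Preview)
Your proposal is correct and follows exactly the route the paper intends: the corollary is stated in the introduction as an immediate consequence of Theorem \ref{moyen} via \cite[Proposition 4.3.9]{Zimmer}, with no further proof given. You have simply unpacked that citation by sketching the standard convolution-plus-weak-$*$-limit argument, which is a faithful expansion of what the paper leaves implicit.
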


 In the classical context of a real Lie group $G$, if $\Gamma$ is a lattice in $G$, acting on a vector space $V$, there is a $\Gamma$-equivariant map from $G/P$ to $\PR(V)$. This boundary map is used, together with some ergodicity properties, to extend the action from $\Gamma$ to $G$, and thus prove a rigidity theorem. The same kind of argument, using the existence of a boundary map, together with some ergodicity properties, was used afterwards in some other situations, for example for groups acting on CAT(-1) spaces \cite{BM}. We hope that such a line of arguments, with an explicit source for the boundary map, could also lead in our case to rigidity and non-linearity properties for lattices of arbitrary buildings \cite{RGAF}. This could also lead to superrigidity results for lattices of irreducible buildings, a complementary case with respect to \cite{Monod}, applied to products of buildings.\\

Let us finigh this introduction by presenting shortly some previous results. The article \cite{Ja} proves that Coxeter groups are amenable at infinity: there exists some compact subspace on which they act amenably. The proof consists essentially in the remark that a Coxeter group is metrically a subset of a finite product of homogeneous trees, remark which we will use again here. The advantage of our proof, even in the case of Coxeter group, is that we have an explicit description on the compact set they act on.
For hyperbolic spaces with a cocompact action, amenability of the action on the (visual) boundary is well-known. The first proof is due to S. Adams \cite{A}. Another proof can be found in \cite{Ka}. 

The strategy we use is the following. First, we use Kaimanovich's proof to get something a little more precise than amenability for the action of a group on a tree. Then we use the remark of \cite{Ja}  that Coxeter groups are embedded in product of trees, and we prove that the action on the combinatorial boundary of product of trees is also amenable. Thus, the action of a Coxeter group on its combinatorial boundary is also amenable. Hence, we get a construction of measures on Coxeter complexes. Then, using incidence properties of the combinatorial sectors, we glue these measures (defined on apartments) to get measures on the combinatorial boundary of the whole building.\\

{\bf Structure of the paper.} The first section shortly recalls the definition and some basic facts about buildings. The second section explains the combinatorial boundary of buildings. Then, the third section is about the embedding of a Coxeter complex into a product of trees, and how this embedding behaves with respect to the compactification. The fourth section recalls the various definition of amenability, and of course especially the one we use. The fifth section proves that Coxeter complexes are boundary amenable, in such a way that the measures we obtain can, in the last section, be glued together to prove the boundary amenability of a group acting on a building.

{\bf Acknowledgment.} The author is very grateful to Bertrand R\'emy for his constant advice.

\section{Buildings}\label{buildings}

In this section, we recall the definition and some standard facts about buildings. Standard books on this subject are \cite{AB} and \cite{Ro}.

There are several  points of view on buildings, some of them geometric, and others more combinatorial. In view of the definition of the combinatorial boundary \cite{CL}, we adopt a combinatorial point of view. Namely, we see a building as a $W$-metric space, where $W$ is a Coxeter group \cite{Ti}.

\subsection{Coxeter groups}
We recall some basic facts about Coxeter groups. All of this is present in \cite[IV.§1]{BBK}.
Recall that a \emph{Coxeter group} is a group $W$ given by a presentation $$W=\langle s_1,\dots,s_n \;|\; (s_is_j)^{m_{ij}}\rangle,$$
where $m_{ij}\in\N\cup\{\infty\}$ are such that $m_{ii}=1$ and $m_{ij}=m_{ji}$ (by $m_{ij}=\infty$, we mean that there is no relation between $s_i$ and $s_j$). The generating set $S=\{s_1,\dots,s_n\}$ is  a part of the data. 

The group $W$ should be thought of as a group generated by a set of reflections $S$. Indeed, standard examples of Coxeter groups are discrete reflection subgroups of a Euclidean space or a hyperbolic space. In general, we still use the word  reflections:

\begin{definition}
 A \emph{reflection} in $W$ is an element of $W$ which is conjugated to a canonical generator of $S$.
\end{definition}

In the sequel, we denote by $(W,S)$ a Coxeter group. An element $w$ of $W$ can be written as a word in $S$. The length $l(w)$ of $w$ is defined as the minimal length of such words. Recall the following basic fact:  if $s\in S$ and $w\in W$, then either $l(sw)=l(w)+1$ or $l(sw)=l(w)-1$.

\begin{definition} The \emph{Coxeter complex} $\Sigma$ associated to $W$ is the Cayley graph of $W$ with respect to the set of generators $S$. 
\end{definition}

Thus, the set of vertices of $\Sigma$ is canonically in bijection with $W$, and each edge of $\Sigma$ is labelled by an element of $S$. Left translation by elements of $W$ endowes $\Sigma$ with a left $W$-action. The labelling of edges is $W$-equivariant.

 In the langage of buildings, vertices of $\Sigma$ are called \emph{chambers} and edges of $\Sigma$ are called \emph{panels}. If $C_1$ and $C_2$ are chambers in $\Sigma$, a \emph{gallery} between $C_1$ and $C_2$ is a path in $\Sigma$ from $C_1$ to $C_2$.

Since edges are labelled by elements of $S$, such a gallery gives rise to a word with letters in $S$, and therefore to an element of $W$. If $C_1=1_W$, by definition of $\Sigma$, this element is exactly $C_2$, and thus does not depend on the gallery chosen. By $W$-equivariance, it is the same for general chambers $C_1$ and $C_2$. We define the \emph{Weyl distance}, or $W$-distance $\delta(C_1,C_2)$ between $C_1$ and $C_2$ to be the element of $W$ discussed above. Note that, seeing $C_1$ and $C_2$ as elements of $W$, we have also $\delta(C_1,C_2)=C_1^{-1}C_2$. Note also that the distance between $C_1$ and $C_2$ in $\Sigma$ exactly the length $l(\delta(C_1,C_2))$.

\subsection{Buildings and apartments}
Let $W$ be a Coxeter group with generating set $S$. We assume $S$ to be finite. In this section, we define buildings of type $(W,S)$.

Coxeter complexes are  special, and perhaps the most simple, cases of buildings. As we have seen, they are endowed with a Weyl distance. A building can be seen as a set endowed with this kind of $W$-distance. More precisely \cite[Definition 5.1]{AB}:

\begin{definition}
 A \emph{building} is a set $\mathcal C$, whose elements are called \emph{chambers}, endowed with a map $\delta : \mathcal C\times \mathcal C\to W$, called the \emph{Weyl distance}, such that:
\begin{enumerate}[(i)]
 \item $\delta(C,D)=1$ if and only if $C=D$
 \item If $\delta(C,D)=w$ and $C'\in \mathcal C$ is such that $\delta(C',C)=s$, then $\delta(C',D)\in\{sw,w\}$. If furthermore $l(sw)=l(w)+1$ then $\delta(C',D)=sw$.
\item If $\delta(C,D)=w$ then for any $s\in S$ there is a chamber $C'$ such that $\delta(C',C)=s$ and $\delta(C',D)=sw$.
\end{enumerate}
\end{definition}

\begin{remark}
A building can alternatively be seen as a higher-dimensionnal simplicial complex of  or as a metric space of nonpositive curvature. But the combinatorics from all these points of view, including ours, is exactly the same: the only difference is that from our point of view, a chamber is just a point; from a simplicial point of view, a chamber  is a simplex; and from a metric point of view, a chamber is some (well-chosen) compact metric space.
\end{remark}

\begin{example}
 The Coxeter complex $\Sigma$ associated to $(W,S)$ is a building. In this example, in axiom (ii), we always have in fact  $\delta(C',D)=sw$.
\end{example}

\begin{example}
 Let $(X_1,\delta_1)$ be a building of type $(W_1,S_1)$ and $(X_2,\delta_2)$ be a building of type $(W_2,S_2)$. Then $X_1\times X_2$, endowed with the Weyl distance $\delta_1\times\delta_2$, is  a building of type $(W_1\times W_2,S_1\cup S_2)$.
\end{example}

\begin{definition}
 Let $X$ and $X'$ be buildings of type $(W,S)$, with associated Weyl distances $\delta$ and $\delta'$ respectively. A \emph{$W$-isometry} $f$ between some subsets $A\subset X$ and $B\subset X'$ is a map $f:A\to B$ such that for every $C,D\in A$, we have $\delta(C,D)=\delta'(f(C),f(D))$. An \emph{automorphism}, or more precisely \emph{type-preserving automorphism}, of $X$ is a bijective $W$-isometry from $X$ to $X$.
\end{definition}

From a building $X$, we can produce a graph by adding an edge between two chambers $C$ and $C'$ if $\delta(C,C')=s\in S$, and we label such an edge by $s$. We also call this graph a building. Two chambers at distance $1$ in this graph will be called \emph{adjacent}. A path in this graph is called a \emph{gallery}. As in the case of Coxeter complexes, $\delta(C,D)$ can be read as the label of the edges of any minimal gallery between $C$ and $D$. 

If $C$ is a chamber and $s\in S$, the set of chambers $C'$ such that $\delta(C,C')=s$ is called a \emph{panel}. Any chamber which belongs to a panel $\sigma$ is said to be \emph{adjacent} to $\sigma$. The \emph{thickness} of a panel is its cardinality. If all panels have finite thickness, the building $X$ is said to be \emph{locally finite}.

\begin{definition}
 Let $X$ be a building. A \emph{convex} or \emph{combinatorially convex} subset of $X$ is a subset $A$ such that any minimal gallery between chambers in $A$ is contained in $A$.
\end{definition}

We can now define apartments in a building $X$:

\begin{definition}
 An apartment $A$ in a building $X$ is a subset of $X$ which is $W$-isometric to the Coxeter complex $\Sigma$ associated to $W$.
\end{definition}

With this definition, it is not really easy to see that apartments exist in a given building. The following theorem, first proved in \cite[§3.7.4]{Tit81} (see also \cite[Theorem 5.73]{AB}), ensures there are many ones:

\begin{theorem}\label{lemTits}
 Let $X$ be a building of type $(W,S)$. Then any subset of $X$ which is $W$-isometric to a subset of $\Sigma$ is contained in an apartment.
\end{theorem}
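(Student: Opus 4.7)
The plan is to rephrase the assertion as an extension problem. Write the given subset of $X$ as $f(Y)$ for a $W$-isometric embedding $f:Y\to X$ with $Y\subseteq \Sigma$; since an apartment is by definition the image of a $W$-isometric embedding $\Sigma\to X$, it suffices to extend $f$ to a $W$-isometry $\tilde f:\Sigma\to X$, as then $\tilde f(\Sigma)$ is an apartment containing $f(Y)$.

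To produce such an extension I would apply Zorn's lemma to the poset of $W$-isometric extensions $g:Z\to X$ of $f$ with $Y\subseteq Z\subseteq \Sigma$, and pick a maximal element $g:Z\to X$. Assume for contradiction that $Z\neq \Sigma$. Connectedness of the Cayley graph $\Sigma$ supplies adjacent chambers $C\in\Sigma\setminus Z$ and $D\in Z$ with $\delta(D,C)=s\in S$. To contradict maximality it is enough to find a chamber $C'\in X$ with $\delta(g(D),C')=s$ and $\delta(g(E),C')=\delta(E,C)$ for every $E\in Z$, for then $g$ extends $W$-isometrically to $Z\cup\{C\}$ by $g(C):=C'$.

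Set $w_E:=\delta(g(D),g(E))=\delta(D,E)$. Thinness of $\Sigma$ gives $\delta(E,C)=sw_E$, so the constraint reads $\delta(C',g(E))=sw_E$. Axiom (iii) supplies at least one candidate for $C'$ in the $s$-panel $\sigma$ through $g(D)$. For any such candidate, axiom (ii) places $\delta(C',g(E))$ in $\{sw_E,w_E\}$ and forces the value $sw_E$ whenever $l(sw_E)>l(w_E)$; the only binding constraints therefore come from those $E\in Z$ with $l(sw_E)<l(w_E)$, for which the required equality is equivalent to $C'=\proj_\sigma(g(E))$, the unique chamber of $\sigma$ nearest $g(E)$ (the gate of the rank-one residue $\sigma$).

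I expect the main obstacle to be consistency: one must show that for any two chambers $E_1,E_2\in Z$ with $l(sw_{E_i})<l(w_{E_i})$, the projections $\proj_\sigma(g(E_1))$ and $\proj_\sigma(g(E_2))$ coincide. Inside $\Sigma$ this holds trivially, because the hypothesis $l(sw_{E_i})<l(w_{E_i})$ exactly says that $C$ lies on a minimal gallery from $D$ to $E_i$, so both $E_i$ project to $C$ on the thin panel $\{C,D\}$. Transferring this identity from $\Sigma$ to $X$ is done by combining the gate property of the rank-one residue $\sigma$ with the $W$-isometry of $g$ and a short Coxeter-theoretic computation using that both $w_{E_i}$ admit reduced expressions beginning with $s$ to analyse $w_{E_1}^{-1}w_{E_2}=\delta(g(E_1),g(E_2))$. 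Once this consistency is in place the single-chamber extension succeeds, maximality of $g$ is contradicted, and the theorem follows with the desired apartment $\tilde f(\Sigma)\supseteq f(Y)$.
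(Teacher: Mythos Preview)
The paper does not prove this theorem; it is quoted from Tits \cite{Tit81} and Abramenko--Brown \cite{AB}. Your outline is precisely the standard argument one finds in those references: Zorn's lemma followed by a one-chamber extension across a panel, with the case split according to whether $l(sw_E)$ exceeds or falls below $l(w_E)$. You have also correctly located the crux, namely the consistency of the projections $\proj_\sigma(g(E_i))$ for those $E_i\in Z$ lying on the $C$-side of the wall.

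Where the proposal is thin is in dismissing this consistency as ``a short Coxeter-theoretic computation''. The obvious line --- if $P_1\neq P_2$ then the gate property gives $\delta(g(E_1),P_2)=v_1^{-1}s$ reduced, and concatenating with $\delta(P_2,g(E_2))=v_2$ should yield a minimal gallery of type $v_1^{-1}sv_2$, contradicting $\delta(g(E_1),g(E_2))=v_1^{-1}v_2$ --- does \emph{not} go through, because the word $v_1^{-1}sv_2$ need not be reduced even under the hypotheses $l(sv_i)>l(v_i)$: already in $W=S_3$ with $v_1=t$, $v_2=ts$ one gets $l(tsts)=l(st)=2$, not $4$. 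The constraint $\delta(g(E_1),g(E_2))=v_1^{-1}v_2$ \emph{is} exactly what forces $P_1=P_2$, but turning this into a proof requires a genuine inductive argument (on $\min(l(w_{E_1}),l(w_{E_2}))$, say), which is the content of the lemma immediately preceding Theorem~5.73 in \cite{AB}. Your ingredients --- gate property, the $W$-isometry of $g$, the fact that each $w_{E_i}$ begins with $s$ --- are the right ones; the combination just takes a page rather than a sentence, and is in fact the technical heart of the standard proof.
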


One should think of apartments as ``slices'' of the building. Properties (ii) and (iii) in the following proposition are often given as another definition of buildings.

\begin{proposition}
 Let $X$ be a building of type $(W,S)$.
\begin{enumerate}[(i)]
 \item Every apartment in $X$ is convex.
\item For every two chambers $C$ and $D$ in $X$, there exists an apartment $A$ in $X$ such that $C,D\in A$.
\item  Let $A_1$ and $A_2$ be two apartments in $X$. If $C\in A_1\cap A_2$, then there exists a bijective $W$-isometry $\phi:A_1\to A_2$ such that $\phi(C)=C$.
 \end{enumerate}

\end{proposition}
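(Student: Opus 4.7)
The plan is to prove (ii) and (iii) first as direct consequences of Theorem~\ref{lemTits} and the definition of an apartment, and then prove (i) by induction on gallery length using a gate lemma for panels.

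For (ii), set $w=\delta(C,D)$. Then the map $1_W\mapsto C$, $w\mapsto D$ is a $W$-isometry from $\{1_W,w\}\subset\Sigma$ to $\{C,D\}\subset X$, since $\delta_\Sigma(1_W,w)=w$. Theorem~\ref{lemTits} places this pair inside some apartment. For (iii), choose $W$-isometries $f_i:\Sigma\to A_i$ (which exist by the definition of an apartment), let $w_i=f_i^{-1}(C)$, and define $g_i:\Sigma\to A_i$ by $g_i(w)=f_i(w_iw)$. Left multiplication by $w_i$ preserves $\delta_\Sigma(w,w')=w^{-1}w'$, so each $g_i$ is a $W$-isometry, and now $g_i(1_W)=C$. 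Then $\phi=g_2\circ g_1^{-1}:A_1\to A_2$ is the required bijective $W$-isometry with $\phi(C)=C$.

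The bulk of the work is (i), which I would derive from the following \emph{gate lemma}: for any chamber $E\in X$, any chamber $F\in X$ and any $s\in S$, among the chambers $s$-adjacent to $F$ (together with $F$ itself) there is a unique chamber minimizing the length of the $W$-distance to $E$; moreover, this gate lies in any apartment containing both $E$ and $F$. The existence of a length-minimizing chamber among two ``obvious'' candidates comes from axiom~(iii): it produces $F'$ with $\delta(F',F)=s$ and $\delta(F',E)=s\cdot\delta(F,E)$, so one of $F,F'$ has length of $W$-distance to $E$ strictly smaller than the other. Axiom~(ii) then forces every other $s$-neighbor of $F$ to have $W$-distance to $E$ equal to the longer of these two values, pinning the gate down uniquely. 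Since an apartment is thin (the panel contains exactly one other chamber besides $F$), the two chambers of the panel lying in the apartment are precisely the two candidates of distinct lengths, so the gate is among them.

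Given the gate lemma, (i) is a short induction on $n=l(\delta(C,D))$. Let $C=C_0,C_1,\dots,C_n=D$ be a minimal gallery of type $(s_1,\dots,s_n)$ in $X$, with $C,D\in A$. Consider the $s_n$-panel through $D$. Minimality of the gallery gives $l(\delta(C,C_{n-1}))=n-1<n=l(\delta(C,D))$, so $C_{n-1}$ is the gate of $C$ on this panel. By the gate lemma, this gate lies in any apartment containing $C$ and $D$, in particular in $A$, so $C_{n-1}\in A$. The subgallery $C_0,\dots,C_{n-1}$ is then a minimal gallery of length $n-1$ between two chambers of $A$, and the inductive hypothesis gives $C_i\in A$ for all $i$. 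The main obstacle is the gate lemma itself; once that is established, (i) is a two-line induction.
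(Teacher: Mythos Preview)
Your argument is correct. For (ii) you do exactly what the paper sketches; for (iii) your explicit construction via $g_i(w)=f_i(w_iw)$ is clean and valid. For (i), your gate lemma is sound: in the case $l(sw)>l(w)$ axiom~(ii) forces every $s$-neighbour of $F$ to have $W$-distance $sw$ to $E$, so $F$ is the gate; in the case $l(sw)<l(w)$, axiom~(iii) produces $F'$ with $\delta(F',E)=sw$, and if some other $s$-neighbour $F''$ also satisfied $\delta(F'',E)=sw$ then $\delta(F'',F')=s$ and axiom~(ii) (applied with $l(s\cdot sw)=l(w)=l(sw)+1$) would force $\delta(F'',E)=w$, a contradiction. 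Thinness of $A$ then identifies the gate with the unique $s$-neighbour of $F$ inside $A$, and your induction on gallery length goes through.

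The paper itself does not argue any of this: its proof consists of citations to Corollaries~5.54, 5.68 and~5.74 of \cite{AB}, together with the one-line remark (identical to yours) that (ii) follows from Theorem~\ref{lemTits}. So your route is genuinely different in that it is self-contained from the $W$-metric axioms, whereas the paper defers to the reference. What your approach buys is independence from \cite{AB} and a direct feel for why convexity holds; what the paper's approach buys is brevity, since these facts are standard and the paper's focus lies elsewhere.
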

\begin{proof}
 This is Corollary 5.54, Corollary 5.68 and Corollary 5.74 in \cite{AB}. Note that (ii) is a direct consequence of Theorem \ref{lemTits}, since $\{C,D\}$ is $W$-isometric to $\{1_W,\delta(C,D)\}\subset \Sigma$.
\end{proof}

\begin{example}
 A tree without endpoints is a building. Here the Weyl group is the infinite dihedral group $D_\infty$ ; apartments are bi-infinite lines \cite[Proposition 4.44]{AB}. More precisely, following our definition, the building is the set of edges of this tree.

The set of vertices of a regular tree of valency $k$ can also be considered as a building, or more precisely, as a Coxeter complex. Its Weyl group is the free product of $k$ copies of $\Z/2\Z$.
\end{example}

\begin{example}
 Let $k$ be a field, and $V$ a vector space of dimension $n$ over $k$. The set of maximal flags of subspaces of $V$ is a building \cite[4.3]{AB}. The Weyl group is the permutation group over $n$ indices. 
\end{example}

\begin{example}
 Let $F$ be a local, nonarchimedean field. Let $G$ be a semisimple algebraic group defined over $k$. Bruhat-Tits theory \cite{BT} associates to such a $G$ a building. The Weyl group of this building is a Euclidean reflection group: it is a group generated by reflections with respect to a locally finite set of hyperplanes, in a Euclidean space of dimension $r$, where $r$ is the rank of $G$.
\end{example}

\subsection{Projections and roots}
\begin{proposition}
Let $C$ be a chamber, and let $\sigma$ be a panel. There exists a unique chamber $C'$ adjacent to $\sigma$ at minimal distance from $C$.
\end{proposition}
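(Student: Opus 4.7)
The plan is to prove existence and uniqueness separately. Existence is immediate: the gallery distance $d(C,D)=l(\delta(C,D))$ takes values in $\N$, so its restriction to $\sigma$ attains a minimum $n$; fix $D\in\sigma$ with $d(C,D)=n$, and set $w:=\delta(C,D)$, so $l(w)=n$. Let $s\in S$ be the type of $\sigma$, so that any two distinct chambers of $\sigma$ are $s$-adjacent; to conclude, I will show that every $D'\in\sigma$ with $D'\neq D$ satisfies $d(C,D')=n+1$.

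Let $D'\in\sigma$, $D'\neq D$, so $\delta(D,D')=s$. Applying axiom (ii) to $\delta(D,C)=w^{-1}$ together with $\delta(D',D)=s$, and using the standard symmetry $\delta(C,D')=\delta(D',C)^{-1}$, one obtains the right-sided analogue
\[
\delta(C,D')\in\{w,\,ws\}, \qquad \delta(C,D')=ws \text{ whenever } l(ws)>l(w).
\]
It therefore suffices to rule out $l(ws)=l(w)-1$. Suppose for contradiction this holds; then $w$ admits a reduced expression ending in $s$, say $w=s_1\cdots s_{n-1}s$. By the standard correspondence between reduced decompositions of $\delta(C,D)$ and minimal galleries from $C$ to $D$ (obtained by iterating axiom (iii)), there exists a minimal gallery $C=C_0,C_1,\ldots,C_{n-1},C_n=D$ whose last edge has type $s$. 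Then $C_{n-1}$ is $s$-adjacent to $D$, hence $C_{n-1}\in\sigma$, yet $d(C,C_{n-1})\leq n-1<n$, contradicting the minimality of $n$. So $l(ws)=l(w)+1$, whence $\delta(C,D')=ws$ and $d(C,D')=n+1>n$, giving uniqueness.

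The main obstacle is the background fact that any reduced decomposition of the $W$-distance $\delta(C,D)$ is the type sequence of some minimal gallery from $C$ to $D$; this is standard in the theory of buildings, relying on an iterated application of axiom (iii), but it is the only non-formal ingredient in the argument. One could alternatively localize the problem inside an apartment $A$ containing $C$ and $D$, invoke the classical wall-separation argument in the Coxeter complex $A$ to handle the one competitor $D'\in\sigma\cap A$, and then transfer the conclusion to arbitrary $D'\in\sigma$ via the isometry of apartments provided by the apartment axioms; the direct axiomatic argument above, however, requires no such detour.
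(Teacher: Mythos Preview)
Your argument is correct. The paper does not actually prove this proposition: it simply refers to \cite[Proposition~5.34]{AB}, the general existence of projections onto residues. What you wrote is essentially the standard direct proof from the $W$-metric axioms, specialised to rank-one residues, and is in spirit the argument one finds in \cite{AB}. The only background facts you use beyond axioms (i)--(iii) are the symmetry $\delta(C,D)=\delta(D,C)^{-1}$ and the correspondence between reduced expressions of $\delta(C,D)$ and minimal galleries from $C$ to $D$; both are immediate consequences of the axioms and are established in \cite{AB} just before the cited proposition. So your route is more self-contained than the paper's bare citation, at the price of a few lines; the alternative apartment-based argument you sketch at the end is also valid but, as you note, unnecessary here.
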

\begin{proof}
This is a particular case of \cite[Proposition 5.34]{AB}.
\end{proof}

\begin{definition}
The chamber $C'$ defined as above is called the \emph{projection} of $C$ onto $\sigma$, and is denoted $\proj_\sigma(C)$.
\end{definition}

\begin{definition}
The set of chambers of $\Sigma$ whose projection on a given panel is constant is called a \emph{root} of $\Sigma$, or a \emph{half-apartment}. More generally, a root (or a half-apartment) in a building $X$ is a subset of $X$ which is $W$-isometric to a root of $\Sigma$. 
\end{definition}

Let us restrict now to the case when $X$ is the Coxeter complex $\Sigma$ of $W$. 
Consider $1_W\in \Sigma$ and the panel $\sigma$ of type $s$ adjacent to $1_W$. The root $\alpha_s$ consisting of the chambers whose projection on $\sigma$ is $1_W$ can be identified with $\{w\in W\;|\;l(sw)>l(w)\}$. All roots of $\Sigma$ are conjugated by some element of $W$ to a root of this type. Thus there is only a finite number of orbits of roots in $\Sigma$.

Note also that, in $\Sigma$, every panel is adjacent to exactly two chambers. Let $\sigma$ be a panel in $\Sigma$, and let $x$ and $y$ be the chambers adjacent to $\sigma$. Then $\Sigma$ is the union of two disjoint roots: the set of chambers $C$ such that $\proj_\sigma(C)=x$, and the set of chambers $C'$ such that $\proj_\sigma(C')=y$. These two roots are called \emph{opposite}. If $\alpha$ is a root of $\Sigma$, its opposite root is usually written $(-\alpha)$. In the case of a root of the form $\alpha_s=\{w\in W\;|\;l(sw)>l(w)\}$, we can see that the reflection $s$ sends $\alpha_s$ to $(-\alpha_s)$. By conjugating by an element of $W$, in the case of a general root $\alpha$, we can see that there is always a reflection of $W$ which switches $\alpha$ and $(-\alpha)$. We call this reflection the \emph{reflection associated to the wall of $\alpha$}.

\begin{definition}
 Let $\alpha$ be a root of $\Sigma$. The set of panels which are adjacent to some chamber of $\alpha$ and to some chamber of $(-\alpha)$ is called the \emph{wall} associated to $\alpha$.
\end{definition}

Note that the wall of $\alpha$ divides the graph $\Sigma$ into two connected components, which are $\alpha$ and $(-\alpha)$.

\subsection{Residues}
There is a natural generalization of the notion of panels, which is the notion of \emph{spherical residue}. This notion is useful to define the combinatorial compactification. In the following, $X$ is a building of type $(W,S)$.

\begin{definition}
 Let $J\subset S$, and $W_J$ the subgroup of $W$ generated by $J$. Let $C$ be a chamber in $X$. The set $R$ of chambers $C'\in X$ such that $\delta(C,C')\in W_J$ is called a \emph{residue of type $J$} of $X$.
If $W_J$ is finite, $R$ is called a \emph{spherical residue}.
\end{definition}

The main interest of residues is the following:
\begin{proposition}[{\cite[Corollary 5.30]{AB}}]
Let $R$ be a residue of type $J$ in the building $X$. Then $R$ is itself a building of type $W_J$.\begin{flushright}$\square$\end{flushright}
\end{proposition}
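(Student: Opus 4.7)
The plan is to verify directly the three building axioms for the pair $(R,\delta|_{R\times R})$ with respect to the Coxeter system $(W_J,J)$, using the axioms already satisfied by $X$ with respect to $(W,S)$.

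The preliminary step would be to check that $\delta$ restricted to $R\times R$ indeed takes values in $W_J$, since this is needed before the axioms can even be formulated for $R$. I would first establish an intrinsic description of $R$: fixing a base chamber $C\in R$, a chamber $C'$ lies in $R$ if and only if some gallery from $C$ to $C'$ has type a word in $J$. The ``only if'' direction relies on the Coxeter-theoretic fact that a reduced expression of an element $w\in W_J$ involves only letters of $J$, combined with the fact that a minimal gallery in $X$ between two chambers realizes a reduced expression of their Weyl distance; the ``if'' direction is an induction on the length of the gallery, using axiom (ii), which forces $\delta(C',D)\in\{sw,w\}\subset W_J$ whenever $\delta(C,D)=w\in W_J$ and an adjacent chamber $C'$ is added with $\delta(C',C)=s\in J$. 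Granting this description, given $C_1,C_2\in R$, I would concatenate $J$-typed galleries from $C_1$ to $C$ and from $C$ to $C_2$, and apply the same axiom-(ii) induction along the concatenation to conclude $\delta(C_1,C_2)\in W_J$.

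Next I would verify the three axioms for $R$ viewed as an object of type $(W_J,J)$. Axiom (i) is inherited from $X$. Axioms (ii) and (iii) for $(W_J,J)$ refer to the length function of $W_J$ relative to the generating set $J$; to transfer them from $X$ I would invoke the standard fact that this length coincides with the restriction of $l$ to $W_J$ (equivalently, reduced expressions of elements of $W_J$ lie in $J^\ast$). With this identification, axiom (ii) transfers verbatim from $X$ to $R$. For axiom (iii), given $C,D\in R$ with $\delta(C,D)=w\in W_J$ and $s\in J$, axiom (iii) in $X$ produces $C'\in X$ with $\delta(C',C)=s$ and $\delta(C',D)=sw$; since $C'$ is $s$-adjacent to $C\in R$ with $s\in J$, the preliminary step forces $C'\in R$, which is the only content beyond what $X$ already provides.

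The only non-formal ingredient is the identification of the length on $W_J$ with the restriction of $l$ to $W_J$ --- the classical statement that standard parabolic subgroups of a Coxeter group are themselves Coxeter, with the inherited length. This is the main obstacle, but it is a purely group-theoretic fact independent of buildings; everything else is a direct unfolding of the axioms.
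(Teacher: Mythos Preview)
The paper does not supply a proof of its own; the statement is quoted from \cite[Corollary 5.30]{AB} and closed with a bare QED box, relying entirely on that reference. Your argument is correct and is essentially the standard verification one finds there: restrict the Weyl distance to $R$, check it lands in $W_J$ via a $J$-gallery argument, and then verify the three axioms, the only substantive input being that $(W_J,J)$ is a Coxeter system whose length function is the restriction of $l$. So there is nothing in the paper to compare against; your sketch simply fills in what the paper chose to cite rather than prove.
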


The set of spherical residues of $X$ is denoted by $\Rsph(X)$. Note that if the building $X$ is locally finite, then spherical residues are finite.

The next thing we do is to endow the set of spherical residues with the structure of a graph -- or, equivalently, to turn it into a discrete metric space. The most natural thing to do would be to consider the incidence graph of $\Rsph(X)$. However, the drawback of this point of view is that the graph of chambers of $X$ will not be isometrically embedded into $\Rsph(X)$. Hence we define another distance on $\Rsph(X)$.

By a slight abuse of language, we say that $R\in\Rsph(X)$ is contained in an apartment $A$ (respectively in a half-apartment $\alpha$) if $R$ contains a chamber which is in $A$ (respectively in $\alpha$).
\begin{definition}
 Let $R,S\in\Rsph(X)$, and $A$ an a apartment containing $R$ and $S$. Let $\Phi_A(R,S)$ (respectively $\Phi_A(S,R)$) be the set of roots of $A$ which contain $R$ but not $S$ (respectively, the set of roots which contain $R$ but not $S$). The \emph{root-distance} $d(R,S)$ between $R$ and $S$ is defined in the following way:

$$d(R,S)=\frac1 2 (|\Phi_A(R,S)|+|\Phi_A(S,R)|).$$
\end{definition}

It is quite easy to see that this root-distance does not depend on the choice of the apartment $A$. The fact that $d$ is indeed a distance needs some verification, see \cite[Proposition 1.2]{CL}.

\begin{example}
Let $W$ be the free product of $r$ copies of the group with two elements. The Coxeter complex $\Sigma$ of $W$ is a regular tree of valency $r$. Then the set of spherical residues of $\Sigma$ can be identified with the union of the set of vertices and the set of edges of $\Sigma$. In this case, the root-distance between two edges or two vertices is the usual distance. If $x$ is a vertex and $e$ an edge, the root distance $d(x,e)$ is equal to $1/2$ plus the minimal distance between $x$ and the extremal vertices of $e$.
\end{example}

\begin{example}
 Let $W=W_1\times W_2$, with Coxeter generators $S=S_1\cup S_2$. Let $X=X_1\times X_2$ be a building of type $W$, with $X_i$ of type $W_i$. Then the set of spherical residues of $X$ is equal to the product $\Rsph(X_1)\times \Rsph(X_2)$. Furthermore, this identification is isometric, when $\Rsph(X)$ and $\Rsph(X_i)$ (for $i=1,2$) are equipped with the root-distance.
\end{example}

As in the case of panels, there is a notion of projection on spherical residues:

\begin{definition}
 Let $R\in\Rsph(X)$ and $C$ a chamber in $X$. The \emph{projection} $\proj_R(C)$ of $C$ on $R$ is the unique chamber $C'\in R$ which realizes the minimum distance between $C$ and any chamber in $R$.

Let $S\in\Rsph(X)$. The projection $\proj_R(S)$ is the union of all the chambers $\proj_R(C)$, where $C\in S$; it is itself a spherical residue.
\end{definition}

\section{Combinatorial boundary}

Let $X$ be a building of type $(W,S)$. 
In this section, we define the combinatorial compactification of $X$. All results, definitions and examples can be found in \cite{CL}.

\subsection{Construction of the boundary}
The projection of a spherical residue on another one defines a map 
$$\pi_{\mathrm {Res}}:\Rsph(X)\to\prod_{S\in\Rsph(X)} \Rsph(S), $$
which associates to a spherical residue $R$ the map $S\mapsto \proj_S(R)$.

We endowe the latter space with the product topology. If $X$ is locally finite, it is a compact space. It is quite easy to see \cite[Proposition 2.2]{CL} that $\pi_ {\mathrm {Res}}$ is $\Aut(X)$-equivariant, injective and has discrete image. Thus we can make the following definition:

\begin{definition}
 Let $\Csph(X)$ be the closure of $\pi_ {\mathrm {Res}}(\Rsph(X))$ in $\prod_{S\in\Rsph(X)} \Rsph(S) $. We call $\Csph(X)$ the \emph{combinatorial bordification} of $X$, or \emph{combinatorial compactification} if $X$ is locally finite (and hence $\Csph(X)$ is compact).
\end{definition}

\begin{example}
Let $T$ be a regular tree, seen either as a building of type $D_\infty$, or as an apartment for a free Coxeter group. Then the combinatorial compactification is the usual compactification: $\Csph(T)=\Rsph(T)\cup\partial_\infty T$.
\end{example}

\begin{example}
 Let $X_1$ and $X_2$ be buildings. As we have seen before, $\Rsph(X_1\times X_2)=\Rsph(X_1)\times \Rsph(X_2)$. Furthermore, if $S_i,R_i\in\Rsph(X_i)$ (for $i=1,2$), then we see that $$\proj_{(R_1,R_2)}(S_1,S_2)=(\proj_{R_1}(S_1),\proj_{R_2}(S_2)).$$
Thus, we have $\Csph(X_1\times X_2)=\Csph(X_1)\times \Csph(X_2)$. 
\end{example}

The combinatorial compactification is much more easily understood in restriction to apartments. A point $\xi$ in  the combinatorial compactification of a Coxeter complex $\Sigma$ is uniquely determined by the roots containing it. More precisely, there exists a set $\Phi(\xi)$ of roots of $\Sigma$ such that every sequence of spherical residues of $\Sigma$ converging to $\xi$ is eventually in every root of $\Phi(\xi)$. 

It is not at all obvious that such an analysis can be done for general buildings; for example, it is not obvious that any $\xi\in\Csph(X)$ is in the boundary of an apartment. Nevertheless, it is the case:

\begin{proposition}[{See \cite[Proposition 1.20]{CL}}]\label{phi}
For every $\xi \in U(X)$, there exists an apartment $A$ containing a sequence of spherical residues converging to $\xi$. For every such $A$, the point $\xi$ is uniquely determined by the set of roots $\Phi_A(\xi)$, which is the set of roots containing a sequence of residues which are all contained in $A$ and are converging to $\xi$ 
\end{proposition}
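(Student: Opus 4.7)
The plan is to first produce an apartment $A$ through a fixed basepoint $C_0 \in X$ that carries the limiting behaviour of $\xi$, and then to argue that inside this apartment $\xi$ is fully recorded by its root data, since the Coxeter complex structure of $A$ makes such a combinatorial description possible. Fix a chamber $C_0$ and a sequence $(R_n) \subset \Rsph(X)$ with $\pi_{\mathrm{Res}}(R_n) \to \xi$; Theorem \ref{lemTits} applied to the $W$-isometric pair $\{C_0,\proj_{R_n}(C_0)\}$ produces an apartment $A_n$ containing $C_0$ and meeting $R_n$.

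Identify each $A_n$ with $\Sigma$ by a $W$-isometry $\phi_n \colon \Sigma \to A_n$ sending $1_W$ to $C_0$. In the locally finite case, for each $w \in W$ the chambers $D$ with $\delta(C_0,D)=w$ form a finite set, so a diagonal extraction yields a subsequence for which $\phi_n(w)$ stabilises for every $w \in W$; the pointwise limit $\phi \colon \Sigma \to X$ is still a $W$-isometry, so its image $A$ is an apartment of $X$ through $C_0$. Use the combinatorial retraction $\rho = \rho_{A,C_0} \colon X \to A$ to define $R'_n := \rho(R_n) \in \Rsph(A)$. Because $A_n$ agrees with $A$ inside any prescribed Weyl ball around $C_0$ once $n$ is large, the retraction $\rho$ acts on $R_n$ by the natural identification $A_n \to A$ coming from the extraction, and from this one verifies that $\pi_{\mathrm{Res}}(R'_n) \to \xi$ in $\Csph(X)$: projections on every spherical residue $S \subset X$ stabilise to the correct value $\xi(S)$.

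For uniqueness, one reduces via $\phi$ to $\Sigma$. Two distinct limit points $\xi, \xi' \in \Csph(\Sigma)$ must differ at some spherical residue $S$, i.e.\ $\xi(S) \neq \xi'(S)$ in $\Rsph(S)$. The two distinct spherical residues $\xi(S)$ and $\xi'(S)$ of the finite Coxeter complex $S$ are separated by a wall of $S$; this wall extends to a wall of $\Sigma$ whose associated root lies in the symmetric difference $\Phi_A(\xi) \triangle \Phi_A(\xi')$, so $\Phi_A$ determines $\xi$. The step I expect to be the main obstacle is the assertion $\pi_{\mathrm{Res}}(R'_n) \to \xi$ at the end of paragraph two: one must verify that projections of $R'_n$ onto residues $S$ \emph{not} contained in $A$ still converge to $\xi(S)$. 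This should rest on the careful diagonal choice of the $A_n$, arranged so that for any prescribed target $S$ the gallery from $C_0$ to $R_n$ used to compute $\proj_S(R_n)$ already lies in $A_n \cap A$ for $n$ large, forcing $\proj_S(R_n) = \proj_S(R'_n)$ for such $n$; getting this right in the non-locally-finite case will require replacing the diagonal extraction by a compactness argument on apartments through $C_0$.
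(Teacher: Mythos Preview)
The paper does not give a proof of this proposition: it simply cites \cite[Proposition~1.20]{CL}. There is therefore no ``paper's own proof'' to compare your attempt against. What follows are comments on your argument itself.

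Your outline is reasonable, and you have correctly located the genuine difficulty: after retracting $R_n$ to $R'_n=\rho(R_n)\in A$, one must check that $R'_n\to\xi$ in $\Csph(X)$, i.e.\ that $\proj_S(R'_n)=\xi(S)$ eventually for \emph{every} spherical residue $S$, not only those lying in $A$. Your proposed justification, however, does not work as written. You say that ``the gallery from $C_0$ to $R_n$ used to compute $\proj_S(R_n)$ already lies in $A_n\cap A$''; but $\proj_S(R_n)$ is determined by minimal galleries between chambers of $S$ and chambers of $R_n$, not by galleries issuing from $C_0$. Knowing that $A_n$ and $A$ agree on a large ball about $C_0$ does not by itself control these galleries, since $R_n$ typically lies far outside any fixed ball and $S$ need not be near $C_0$ either. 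So the equality $\proj_S(R_n)=\proj_S(R'_n)$ is not established.

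A cleaner route, and the one taken in \cite{CL}, avoids the retraction altogether. After a diagonal extraction one arranges that the combinatorial convex hulls $\Conv(C_0,R_n)$ form an increasing chain; their union is then $W$-isometric to a subset of $\Sigma$ and hence lies in an apartment $A$ by Theorem~\ref{lemTits}. With this construction the original residues $R_n$ (along the subsequence) already belong to $A$, so no transport by $\rho$ is needed and the convergence $R_n\to\xi$ is untouched. If you wish to keep your retraction approach, you would need an additional argument---for instance, that $\proj_S$ factors through the retraction $\rho$ once the convex hull $\Conv(\xi(S),C_0)$ is contained in $A_n\cap A$---and this requires more than the ball-agreement you invoke.
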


This proposition is quite useful to understand the compactification. For example, it is possible to understand the case of affine buildings:

\begin{example}
Let us consider an apartment $A$ of type $\tilde A_2$. It is a Euclidean plane,
 tessellated by regular triangles. We can distinguish several types of boundary points. Let us choose some root basis $\{a_1,a_2\}$ in the vectorial system of roots. Then there is a point $\xi\in \Csph(A)$ defined by $\Phi(\xi)=\{a_1+k,a_2+l|\,k,l\in\Z\}$. There are $6$ such points, which correspond to a choice of positive roots, {\it i.e.}, to a Weyl chamber in $A$. The sequences of (affine) chambers that converge to these points are the sequences that eventually stay in a given sector, but whose distances to each of the two walls in the boundary of this sector tend to infinity.

There is also another category of boundary points, which corresponds to sequence of residues that stay in a given sector, but stay at bounded distance of one of the two walls defining this sector. With a choice of $a_1$ and $a_2$ as before, these are points associated to set of roots of the form $\{a_1+k,a_2+l\mid k,l\in\Z,k\leqslant k_0\}\cup\{ -a_1 -k \mid k > k_0 \}$, or $\{a_1+k,a_2+l\mid k,l\in\Z ,k\leqslant k_0\}\cup\{-a_1-k_0\}$. As $k_0$ varies, we get a ``line'' of such points, and there are $6$ such lines.

When $X$ is a building of type $\tilde A_2$, as we have seen in Proposition \ref{phi}, we can always write a point in the boundary of $X$ as a point in the boundary of some apartment of $X$. Thus, the description above gives us also a description of the compactification of $X$.
\end{example}

In this example, as in the more general case of affine buildings associated to $p$-adic groups, the compactification we get was studied in \cite{GuR}.

\subsection{Sectors}

A very useful tool in the study of the combinatorial compactification is the notion of combinatorial sectors. These sectors generalize the classical notion of sectors in affine buildings: these classical sectors are also sectors in our sense. Our notion of sectors also include what G. Rousseau called ``chemin\'ees'' in \cite{Rou}.

Recall that the \emph{combinatorial convex closure} of two residues $x$ and $y$ of $X$, denoted by $\Conv(x,y)$, is the intersection of all the half-apartments that contain them.

\begin{definition}
Let $x\in \Rsph(X)$ and $\xi\in \Csph(X)$. Let $(x_n)$ be a sequence of residues converging to $\xi$. We let $$Q(x,\xi)=\bigcup_{n=0}^\infty \bigcap_{k\geqslant n}\Conv(x,x_k),$$
and call it the (combinatorial) \emph{sector} from $x$ to $\xi$.
\end{definition}

If $x$ and $y$ are in $\Rsph(X)$, we know that $x$ and $y$ are contained in an apartment. Since apartments are convex, we can deduce  that $\Conv(x,y)$ is also contained in an apartment, and hence is $W$-isometric to some subset of $\Sigma$. It is easy to deduce from this fact that $Q(x,\xi)$ is $W$-isometric to a subset of $\Sigma$, and hence is contained in an apartment by Theorem \ref{lemTits}.

It is not obvious \textit{a priori} that this $Q(x,\xi)$ does not depend on the choice of the sequence $(x_n)$. It is nevertheless the case, as the following proposition \cite[Proposition 2.27]{CL} proves:

\begin{proposition}\label{sectorroots}
 Let $\xi\in \Csph(X)$ and $x\in X$. Let $A$ be an apartment containing $Q(x,\xi)$. Then $Q(x,\xi)$ is the intersection of the roots of $A$ which are in $\Phi_A(\xi)\cap\Phi_A(x)$.
\end{proposition}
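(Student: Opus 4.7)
The plan is to prove the two inclusions separately, working inside the apartment $A$ where $\Conv$ is characterized as an intersection of roots: for $x,y\in A$, $\Conv(x,y)$ is the intersection of all roots of $A$ containing both $x$ and $y$.

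For the easy inclusion $Q(x,\xi)\subset\bigcap\{\alpha\in\Phi_A(x)\cap\Phi_A(\xi)\}$, fix such an $\alpha$ and a sequence $(x_k)$ in $A$ with $x_k\to\xi$ defining $Q(x,\xi)$. Then $x\in\alpha$ and, by definition of $\Phi_A(\xi)$, one has $x_k\in\alpha$ for all $k$ large enough, say $k\geqslant n_0$. For each such $k$, $\Conv(x,x_k)\subset\alpha$. For any $n$, the set $\bigcap_{k\geqslant n}\Conv(x,x_k)$ is contained in $\Conv(x,x_{k_0})$ for any single $k_0\geqslant\max(n,n_0)$, hence it is contained in $\alpha$. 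Taking the union over $n$ gives $Q(x,\xi)\subset\alpha$.

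For the reverse inclusion, let $y\in\bigcap\{\alpha\in\Phi_A(x)\cap\Phi_A(\xi)\}$. I want to show that $y\in\Conv(x,x_k)$ for all $k$ large enough, which will place $y$ in $Q(x,\xi)$. Suppose not; then for infinitely many indices $k$ there exists a root $\alpha_k$ of $A$ with $x,x_k\in\alpha_k$ but $y\notin\alpha_k$. The key finiteness input is that the set of roots of $A$ containing $x$ and not $y$ is finite: by the root-distance formula its cardinality is bounded by $2d(x,y)$. A pigeonhole argument produces a single root $\alpha$ of $A$ with $x\in\alpha$, $y\notin\alpha$, and $x_k\in\alpha$ for infinitely many $k$.

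To reach a contradiction I use the following dichotomy, which follows from the very definition of convergence in $\prod_{S}\Rsph(S)$: for each wall of $A$, the sequence $(x_k)$ is eventually on one side. Indeed, picking a panel $\sigma$ of that wall, the convergence $x_k\to\xi$ forces $\proj_\sigma(x_k)$ to be eventually equal to a fixed spherical residue contained in $\sigma$, which determines once and for all whether $x_k$ belongs to $\alpha$ or to $-\alpha$ eventually. Applied to our $\alpha$, this dichotomy combined with the fact that $x_k\in\alpha$ infinitely often forces $x_k\in\alpha$ eventually, that is $\alpha\in\Phi_A(\xi)$. Together with $x\in\alpha$ this means $\alpha\in\Phi_A(x)\cap\Phi_A(\xi)$, and the hypothesis on $y$ gives $y\in\alpha$, contradicting $y\notin\alpha$.

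The main obstacle I anticipate is a clean statement of the dichotomy above; once it is granted (it is morally built into the convergence in the product topology via projections onto panels, exactly as in the construction of $\Csph(X)$), the pigeonhole argument based on $|\Phi_A(x,y)|\leqslant 2d(x,y)$ makes the reverse inclusion routine. The easy inclusion, in turn, uses only the convexity of apartments and the characterization of $\Conv$ as an intersection of roots of $A$.
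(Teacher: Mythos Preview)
The paper does not give its own proof of this proposition; it is simply quoted as \cite[Proposition~2.27]{CL}. So there is no argument in the present paper to compare yours against, and your proposal has to be judged on its own.

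Your two inclusions are handled correctly once one grants that the defining sequence $(x_k)$ lies in $A$. The dichotomy you invoke is exactly what the product-topology definition of $\Csph(X)$ provides: for a panel $\sigma$ on the wall of $\alpha$, the coordinate $\proj_\sigma(x_k)$ must eventually stabilise, and ``$x_k$ meets $\alpha$'' is read off from this value; hence infinite occurrence forces eventual occurrence, and the pigeonhole against the finite set $\Phi_A(x,y)$ goes through. The easy inclusion is fine as written.

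There is, however, one point you should tighten. In the paper's logic this proposition is precisely what is used to show that $Q(x,\xi)$ does not depend on the choice of the sequence $(x_n)$: one fixes an \emph{arbitrary} sequence, lands in some apartment $A$ by Theorem~\ref{lemTits}, and then identifies the resulting set with the sequence-free intersection $\bigcap\Phi_A(x)\cap\Phi_A(\xi)$. Your proof starts instead by \emph{choosing} $(x_k)$ inside $A$. That presupposes either that $\xi\in\Csph(A)$ (so that such a sequence exists) or that $Q(x,\xi)$ is already known to be independent of the sequence---which is circular. Moreover, for a sequence not contained in $A$ your pigeonhole step breaks down as stated, since the half-apartments $\alpha_k$ witnessing $y\notin\Conv(x,x_k)$ need not be roots of $A$ and thus need not lie in the finite set $\Phi_A(x,y)$. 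The standard remedy is to retract onto $A$ with centre $x$: the retraction fixes $Q(x,\xi)$ pointwise, sends $\Conv(x,x_k)$ onto $\Conv(x,\rho(x_k))$, and produces a sequence $\rho(x_k)$ in $A$ still converging to $\xi$; after this reduction your argument runs unchanged. You should make this reduction explicit.
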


An interesting fact is that the notion of sectors behaves well with respect to the topology of $\Csph(X)$.

\begin{proposition}[{See \cite[Corollary 2.18]{CL}}]\label{induction}
  Let $\xi\in \Csph(X)$ and $x\in \Rsph(X)$. Let $(\xi_n)$ be a sequence of elements of $\Csph(X)$ which converges to $\xi$. Then $Q(x,\xi)$ is the pointwise limit of $Q(x,\xi_n)$.
\end{proposition}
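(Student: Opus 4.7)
My plan is to reduce the pointwise convergence to a finite verification. For each spherical residue $y$, I would characterize membership $y \in Q(x, \xi')$ by a finite conjunction of conditions on the projections $(\proj_S(\xi'))_S$, with $S$ running over a finite set depending only on $x$ and $y$. Since the topology on $\Csph(X)$ is the restriction of the product topology on $\prod_S \Rsph(S)$ and each factor is discrete, convergence $\xi_n \to \xi$ forces every finite collection of projections to stabilize; hence $y \in Q(x, \xi_n) \Leftrightarrow y \in Q(x, \xi)$ for $n$ large, which is exactly pointwise convergence of the sectors.

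The key input is Proposition \ref{sectorroots}, which realizes $Q(x, \xi)$ as $\bigcap_{\alpha \in \Phi_A(\xi) \cap \Phi_A(x)} \alpha$ whenever $A$ is an apartment containing $Q(x, \xi)$. I would first prove a preliminary lemma: for a root $\alpha$ of an apartment $A$ that carries a sequence of residues converging to $\xi$, the condition $\alpha \in \Phi_A(\xi)$ is equivalent to $\proj_\sigma(\xi)$ being the chamber of $\sigma$ lying in $\alpha$, for any panel $\sigma$ on the wall of $\alpha$. This follows from Proposition \ref{phi}: a sequence $z_k \in A$ with $z_k \to \xi$ satisfies $z_k \in \alpha$ iff $\proj_\sigma(z_k)$ is the $\alpha$-chamber of $\sigma$ (the wall of $\alpha$ divides $A$ into $\alpha$ and $-\alpha$), and the equivalence passes to the limit because $\Rsph(\sigma)$ is finite.

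With the lemma in hand, fix an apartment $A$ containing $x$ and $y$, and list the finite set $\mathcal{R}(x, y)$ of roots $\alpha$ of $A$ with $x \in \alpha$ and $y \notin \alpha$, choosing a panel $\sigma_\alpha$ on the wall of each such root. The criterion I would prove is: $y \in Q(x, \xi)$ if and only if for every $\alpha \in \mathcal{R}(x, y)$, the chamber $\proj_{\sigma_\alpha}(\xi)$ lies on the $-\alpha$-side of $\sigma_\alpha$. The forward direction uses the convexity of sectors, which gives $\Conv(x, y) \subset Q(x, \xi)$, and then Proposition \ref{sectorroots} together with the preliminary lemma forces $\proj_{\sigma_\alpha}(\xi) \in -\alpha$ for each $\alpha \in \mathcal{R}(x, y)$. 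For the converse, Theorem \ref{lemTits} enables one to enlarge $W$-isometric configurations to apartments, so the projection conditions allow one to locate an apartment $A'$ containing $x$, $y$, and a sequence to $\xi$, inside which a sequence convergent to $\xi$ must eventually lie in $\bigcap_{\alpha \in \mathcal{R}(x, y)} (-\alpha)$; combined with the definition of $Q(x, \xi)$ as $\bigcup_{n} \bigcap_{k \geq n} \Conv(x, z_k)$, this delivers $y \in Q(x, \xi)$.

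The hardest point I expect is the converse direction when no apartment containing $x$ and $y$ happens to carry a sequence converging to $\xi$. Then $y \notin Q(x, \xi)$ holds trivially, but I must still certify that this incompatibility is detected by one of the finitely many projection conditions, so that $y \notin Q(x, \xi_n)$ for $n$ large. The natural tool here is again Theorem \ref{lemTits}: the obstruction to extending the $W$-isometric set $\Conv(x, y) \cup \{\text{residues of a sequence to } \xi\}$ to a common apartment should translate into some $\proj_{\sigma_\alpha}(\xi)$ sitting on the $\alpha$-side rather than the $-\alpha$-side. Once this is confirmed, the pointwise convergence follows at once from the product-topology convergence $\xi_n \to \xi$, because the finite family of discrete-valued projection conditions stabilizes for $n$ large.
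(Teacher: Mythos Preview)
The paper does not prove this proposition; it is quoted from \cite[Corollary~2.18]{CL}, so there is no in-paper argument to compare against. I therefore comment only on the soundness of your proposal.

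Your strategy---characterise $y\in Q(x,\xi')$ by a finite list of projection constraints depending only on $x,y$, then use that the product topology makes finite families of projections stabilise---is the right one and does lead to a proof. Two points, however, require correction.

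First, the phrase ``on the $-\alpha$-side of $\sigma_\alpha$'' is ambiguous in a thick building: the panel $\sigma_\alpha$ has many adjacent chambers, only two of which lie in your chosen apartment $A$. The condition you need is the \emph{strong} one $\proj_{\sigma_\alpha}(\xi')=\proj_{\sigma_\alpha}(y)$, not the weak one $\proj_{\sigma_\alpha}(\xi')\neq\proj_{\sigma_\alpha}(x)$. Already in a tree with a trivalent vertex $\sigma$ and incident edges $x,y,y'$, taking $\xi$ beyond $y'$ satisfies the weak condition while $y\notin Q(x,\xi)$; so the weak reading makes your criterion false.

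Second, the case split you flag as ``hardest'' is unnecessary, and your sketch of the converse via Theorem~\ref{lemTits} is too vague to count as an argument. Choose the panels $\sigma_\alpha$ along a fixed minimal gallery $x=c_0,c_1,\dots,c_m=y$, so that the $i$-th panel separates $c_{i-1}$ from $c_i$ with $c_i$ on the $y$-side. If $z$ is any chamber with $\proj_{\sigma_\alpha}(z)=c_i$ at each step, then the gate property gives $d(c_i,z)=d(c_{i-1},z)-1$ for every $i$, hence $d(x,z)=d(x,y)+d(y,z)$ and $y\in\Conv(x,z)$. Applied to $z=z_k$ for $k$ large (where $z_k\to\xi'$), this yields $y\in Q(x,\xi')$ directly, with no appeal to an apartment carrying a sequence to $\xi'$ and no need for Theorem~\ref{lemTits}. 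The forward direction via convexity of sectors is fine. With these two fixes your criterion becomes: $y\in Q(x,\xi')$ iff $\proj_{\sigma_\alpha}(\xi')=\proj_{\sigma_\alpha}(y)$ for each of the finitely many panels $\sigma_\alpha$ on a minimal gallery from $x$ to $y$; and the pointwise convergence follows immediately.
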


The fundamental fact about sectors, which makes them useful, is that they always intersect when they have the same boundary point \cite[Proposition 2.30]{CL}:

\begin{proposition}\label{intersect}
 Let $x$ and $y$  in $\Rsph(X)$ and $\xi\in \Csph(X)$. Then $Q(x,\xi)\cap Q(y,\xi)\neq\varnothing$.
\end{proposition}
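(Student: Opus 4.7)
The strategy is to locate a common residue by projecting $y$ onto the sector $Q(x,\xi)$ and showing that this projection also lies in $Q(y,\xi)$. By Proposition \ref{sectorroots}, the sector $Q(x,\xi)$ coincides, inside an apartment $A$ containing it, with the intersection of all roots in $\Phi_A(\xi)\cap\Phi_A(x)$, and is therefore a combinatorially convex subcomplex of $X$. Each root has the gate property, and this property propagates to their (possibly infinite) intersection $Q(x,\xi)$: the minimum $d(y,Q(x,\xi))$ is a non-negative integer, so the nearest residue stabilises after finitely many successive projections. One thus obtains a unique residue $p\in Q(x,\xi)$, the projection of $y$ onto $Q(x,\xi)$, characterised by
$$d(y,q)\;=\;d(y,p)+d(p,q)\qquad\text{for every }q\in Q(x,\xi).$$

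Next, choose a sequence $(x_n)$ of spherical residues lying inside $Q(x,\xi)$ and converging to $\xi$. Such a sequence exists: inside $A$, the sector extends unboundedly towards $\xi$, and any defining sequence of $\xi$ in $A$ can be replaced by its sector-projection without changing its limit. Applying the gate identity with $q=x_n$ gives $d(y,x_n)=d(y,p)+d(p,x_n)$, which says that $p$ lies on a minimal gallery from $y$ to $x_n$; hence $p\in\Conv(y,x_n)$ for every $n$. By Proposition \ref{induction} applied to this sequence, $Q(y,\xi)$ is the pointwise limit of $Q(y,x_n)=\Conv(y,x_n)$, so $p\in Q(y,\xi)$. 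Combined with $p\in Q(x,\xi)$ by construction, this produces the desired common residue $p\in Q(x,\xi)\cap Q(y,\xi)$.

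The main obstacle is the gate property itself: for a single root, or for a finite intersection of roots, it is a classical fact of building theory, but $Q(x,\xi)$ is \emph{a priori} an infinite intersection of roots. The passage to the infinite case is justified by the integer-valuedness of combinatorial distances — as one successively intersects with more roots, the projection can only move finitely many times before stabilising inside every root in $\Phi_A(\xi)\cap\Phi_A(x)$, and the stable residue is the desired gate. A secondary but easier point is that a sequence of residues lying deep in $Q(x,\xi)$ actually converges to $\xi$; this follows from Proposition \ref{sectorroots} together with the explicit description of convergence in the apartment $A$.
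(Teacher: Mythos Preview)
The paper does not actually prove this proposition: it is quoted from \cite[Proposition~2.30]{CL}, so there is no in-paper argument to compare with. I can therefore only assess your proposal on its own merits.

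Your overall strategy --- produce a gate $p=\proj_{Q(x,\xi)}(y)$ and then use the gate identity $d(y,x_n)=d(y,p)+d(p,x_n)$ to place $p$ in every $\Conv(y,x_n)$ --- is reasonable, but the key step is not justified. You assert that ``each root has the gate property, and this property propagates to their (possibly infinite) intersection $Q(x,\xi)$''. Two problems:
\begin{itemize}
\item The gate property in building theory is a theorem about \emph{residues}; a root is a half-apartment inside some apartment $A$, and it is not at all obvious (nor standard) that a half-apartment of $A$ is gated with respect to an arbitrary residue $y\in X$ lying outside $A$. You would at least need to pass through a retraction onto $A$ and argue carefully that the gate for $\rho(y)$ inside the Coxeter complex $A$ is also a gate for $y$ in $X$; this is not automatic, since retractions only preserve distances to the \emph{centre}, and minimal galleries in $A$ need not pass through the gate.
\item Even granting the gate property for single roots, your ``successive projections stabilise'' argument does not work: projecting onto $\alpha_1$ and then onto $\alpha_2$ may throw you out of $\alpha_1$ again, so iterated single-root projections have no reason to converge into $\bigcap_i\alpha_i$, let alone to a point satisfying the gate identity for the intersection. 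Discreteness of $d$ gives you a nearest residue $p$, but not the identity $d(y,q)=d(y,p)+d(p,q)$ for all $q\in Q(x,\xi)$, which is what you actually use.
\end{itemize}
A minor point: the root-distance on $\Rsph(X)$ takes half-integer values (cf.\ the tree example in the paper), so ``non-negative integer'' is not quite right, though this does not affect discreteness.

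The secondary step --- finding $(x_n)\subset Q(x,\xi)$ converging to $\xi$ --- is also sketchier than it looks: your justification (``replace by its sector-projection'') again invokes a projection onto $Q(x,\xi)$, i.e.\ the very gate property in question (this time within $A$, where it \emph{is} classical, but you should say so). In \cite{CL} the non-emptiness of $Q(x,\xi)\cap Q(y,\xi)$ is established by a different route, using the root description of sectors (your Proposition~\ref{sectorroots}) and the compatibility of the $\Phi_A(\xi)$'s across apartments, rather than a gate/projection argument from an external $y$.
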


\begin{example}
 In the case of a tree, seen as a building of type $D_\infty$, the sector $Q(x,\xi)$ is the set of edges on the geodesic ray from $x$ to $\xi$. 

In the case of a regular tree of valency $r$, seen as a Coxeter complex for the group $(\Z/2\Z)^{\ast r}$, the sector $Q(x,\xi)$ is the set of vertices on the geodesic ray from $x$ to $\xi$.
\end{example}

\section{Trees in Coxeter complexes}\label{arbres} 
In this section, we explain how to construct trees in a general Coxeter complex. Furthermore, we prove that there are enough trees to fully encode the Coxeter complex. This will be a useful tool to prove amenability. The ideas that are presented here are essentially contained in \cite{Ja} and \cite{NV}.

\subsection{Construction of the trees}

Let $(W,S)$ be a Coxeter system, with $S$ finite. Let $\Sigma$ be the associated Coxeter complex. The complex $\Sigma$ has a geometric realization, called its Davis-Moussong realization. This construction is described in details for example in \cite{Dav08}. In this realization, the chambers ({\it i.e.}, vertices in $\Sigma$) are replaced by some suitable compact, piecewise Euclidean, metric space. We denote this geometric realization by $|\Sigma|$. It is a locally compact CAT(0) metric space. A wall in $\Sigma$ is geometrically realized as a convex subspace of $|\Sigma|$ which divides $|\Sigma|$ into two connected components. We also call a wall the geometric realization of a wall.

Since we know that there is an injection from $W$ to some $\mathrm{GL}_n(\C)$, Selberg's lemma \cite{Al} allows us to find a normal torsion-free finite index subgroup $W_0$ of $W$. We will now see that the orbits of walls in $|\Sigma|$ give rise to regular trees. This remark has already be done in \cite{NV} or \cite{Ja}.
The following lemma is well-known, see \cite{Mil} or \cite[Lemma 3.3]{NV}:

\begin{lemma}
Let $g\in W_0$, and let $H$ be a wall of $|\Sigma|$, of associated reflection $r$. Then either $gH=H$, or $gH\cap H=\varnothing$. In the first case, $r$ and $g$ commute.\begin{flushright}$\square$\end{flushright}
\end{lemma}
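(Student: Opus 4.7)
The plan is to contrapose: assume $gH\cap H\neq\varnothing$ and derive $gH=H$, then deduce the commutation statement from the uniqueness of the reflection fixing a given wall.

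First I would set $r'=grg^{-1}$. Since $g$ acts on $|\Sigma|$ by isometries and $r$ fixes $H$ pointwise while swapping its two sides, $r'$ fixes $gH$ pointwise and swaps its two sides, so $r'$ is precisely the reflection associated to the wall $gH$. Now suppose for contradiction that $gH\cap H\neq\varnothing$. Pick a point $p$ in the intersection. In the Davis--Moussong realization, the stabilizer in $W$ of any point of $|\Sigma|$ is a (finite) spherical parabolic subgroup; in particular the subgroup $\langle r,r'\rangle$, which is contained in the stabilizer of $p$, is finite. Being generated by two involutions, $\langle r,r'\rangle$ is a finite dihedral group, so $rr'$ has finite order.

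Next I would show that $rr'\in W_0$. Writing out $rr' = r\cdot grg^{-1} = (rgr^{-1})\,g^{-1}$, normality of $W_0$ gives $rgr^{-1}\in rW_0r^{-1}=W_0$ since $g\in W_0$, and $g^{-1}\in W_0$, so $rr'\in W_0$. But $W_0$ is torsion-free, forcing $rr'=1$, i.e.\ $r=r'$. Since a reflection is determined by its wall (and conversely), this yields $H=gH$, contradicting the standing assumption $gH\neq H$. Hence $gH\cap H=\varnothing$ whenever $gH\neq H$, which is the first claim.

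For the second claim, suppose $gH=H$. Then the reflection $r'=grg^{-1}$ associated to $gH$ has the same wall as $r$, so by the uniqueness of reflection per wall, $grg^{-1}=r$, i.e.\ $g$ and $r$ commute. The only nontrivial ingredient is the finiteness of point stabilizers in $|\Sigma|$ (equivalently, the classical fact that two reflections in $W$ whose walls meet generate a finite dihedral subgroup), which I would simply invoke from the theory of the Davis realization; everything else is a direct computation using normality and torsion-freeness of $W_0$.
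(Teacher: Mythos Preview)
Your argument is mathematically correct. The paper itself does not prove this lemma: it simply cites \cite{Mil} and \cite[Lemma 3.3]{NV} and marks the statement with a $\square$. Your self-contained argument via the torsion-freeness of $W_0$, the finiteness of point stabilizers in the Davis--Moussong realization, and the computation $rr'=(rgr^{-1})g^{-1}\in W_0$ is exactly the standard one from those references, so there is nothing to compare at the level of strategy.

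One minor presentational point: your logical framing wobbles between ``contrapose'' and ``suppose for contradiction,'' and you refer to a ``standing assumption $gH\neq H$'' that you never actually made. The clean statement of what you prove is simply the implication $gH\cap H\neq\varnothing \Rightarrow gH=H$, from which the dichotomy follows immediately; no contradiction hypothesis is needed. This is cosmetic and does not affect the validity of the argument.
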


Consequently, the walls of the $W_0$-orbit of some given wall $H$ have pairwise empty intersection. They divide $|\Sigma|$ into connected components. Let us consider the graph $T_{W_0}(H)$ whose vertices are these components, and are linked by an edge if they are adjacent, so that a wall in the $W_0$-orbit of $H$ is represented by an edge of $T_{W_0}(H)$.

It is clear that $T_{W_0}(H)$ is connected. Furthermore, removing an edge to $T_{W_0}(H)$ corresponds to removing a wall in $|\Sigma|$, and it turns it into a non-connected space; as the different walls in $W_0.H$ do not intersect, $T_{W_0}(H)$ is also divided into two connected components. Thus $T_{W_0}(H)$ is a tree.

By construction, $W_0$ acts transitively on the edges of the tree. Furthermore, since $W_0$ is normal in $W$, one can define a simplicial action of $W$ on the set of trees $T_{W_0}(H)$, with $W_0$ fixed. This action is defined by $w.T_{W_0}(H)=T_{W_0}(wH)$. Indeed, for all $g\in W_0$, there exists $g'\in W_0$ such that $wgH=g'wH$.
Furthermore, since $W_0$ is of finite index in $W$, and there is a finite number of $W$-orbits of walls, the $W_0$-orbits of walls are also finite in number. Thus, there exists a finite number of walls $H_1,\dots,H_l$, such that each wall of $\Sigma$ appears as an edge in exactly one of the $T_{W_0}(H_i)$, for each $1\leqslant i\leqslant l$. Let us set $T_i=T_{W_0}(H_i)$. 

\subsection{Encoding the Coxeter complex}\label{encoding}

In the preceeding section, we defined a finite number of trees $T_i$, with $1 \leqslant i\leqslant l$.  We also have seen that $W_0$ acts on each of these trees, and that $W$ acts on this set of trees by permuting them. More precisely, let $w\in W$. The image by $w$ of a wall in $\Sigma$ is another wall in $\Sigma$, and thus the image of some edge in some $T_i$ is an edge in some tree $T_{\sigma(i)}$, for some permutation $\sigma$ associated to $w$. Furthermore, if two edges are adjacent in $T_i$, then their images are again adjacent in $T_j$. So, from the action on the set of edges we get an action on the set of vertices on $T_1\cup\dots\cup T_l$. We define the action of $W$ on $T_1\times\dots\times T_l$ to be the diagonal action.

Let $r_i$ be the valency of the homogeneous tree $T_i$. Note that $r_i$ can be any integer, and can also be infinite (countable). We know that $T_i$ can be seen as the Coxeter complex of the Coxeter group $W^{r_i}\simeq (\Z/2\Z)^{*r_i}$. This allows us to speak of roots in $T_i$ and of the combinatorial compactification of $T_i$.

Using the arguments above, we can prove:

\begin{lemma}\label{lem:root}
 There is a $W$-equivariant bijection $\Psi$ between the set $\Phi(\Sigma)$ of roots of $\Sigma$ and the set $\Phi(T_1\dots\times\dots\times T_l)$ of roots in $T_1\dots\times\dots\times T_l$.
\end{lemma}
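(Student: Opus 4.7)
\medskip

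My plan is to unravel both sides of the purported bijection and then match them combinatorially.

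First, I would identify the right-hand side. Since $T_1\times\dots\times T_l$ is a building for the product Coxeter group $W_1\times\dots\times W_l$ with $W_i\simeq (\Z/2\Z)^{*r_i}$, each wall of the product is a wall of a single factor crossed with the remaining factors, and consequently every root has the form $T_1\times\dots\times\beta\times\dots\times T_l$ for a unique $i$ and a unique root $\beta\in\Phi(T_i)$. This gives a canonical identification $\Phi(T_1\times\dots\times T_l)\cong \bigsqcup_{i=1}^l \Phi(T_i)$. So the task reduces to constructing a $W$-equivariant bijection $\Psi:\Phi(\Sigma)\to\bigsqcup_i\Phi(T_i)$.

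To define $\Psi$, I use the geometric realization $|\Sigma|$. Given $\alpha\in\Phi(\Sigma)$, its wall $H$ is a wall of $|\Sigma|$ which, by the construction preceding the lemma, lies in the $W_0$-orbit of a unique $H_i$, and therefore $H$ appears as a unique edge $e$ of $T_i$. Now the vertices of $T_i$ are precisely the connected components of $|\Sigma|\setminus(W_0\cdot H_i)$. Because the walls in $W_0\cdot H_i$ are pairwise disjoint (by the lemma) and $H\in W_0\cdot H_i$, each such component lies entirely inside $|\alpha|$ or entirely inside $|{-\alpha}|$. The components contained in $|\alpha|$ span a connected subgraph of $T_i$ (any two such regions can be joined inside $|\alpha|$ without crossing $H$), and together with those in $|{-\alpha}|$ they partition the vertex set of $T_i$ into the two components of $T_i\setminus\{e\}$. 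I set $\Psi(\alpha)$ to be this component, viewed as a root of $T_i$.

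Bijectivity is then straightforward: injectivity because $\Psi(\alpha)$ determines $e$, hence $H$, and then determines which side of $H$ we picked; surjectivity because the two roots of $T_i$ sharing wall $e$ are exactly the images of the two opposite roots of $\Sigma$ sharing wall $H$. For $W$-equivariance, let $w\in W$. Then $wH$ lies in $W_0\cdot H_{\sigma(i)}$ where $\sigma$ is the permutation of $\{1,\dots,l\}$ induced by $w$; the action of $w$ sends the connected components of $|\Sigma|\setminus(W_0\cdot H_i)$ bijectively to those of $|\Sigma|\setminus(W_0\cdot H_{\sigma(i)})$, which is exactly the $W$-action on trees defined in the preceding paragraph of the paper. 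Hence $\Psi(w\alpha)=w\cdot\Psi(\alpha)$.

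The step I expect to require the most care is the verification that the family of components of $|\Sigma|\setminus(W_0\cdot H_i)$ contained in $|\alpha|$ is exactly one of the two halves of $T_i\setminus\{e\}$; everything else is essentially bookkeeping. This in turn rests on the two ingredients already available in the paper: the pairwise disjointness of walls in a single $W_0$-orbit (the lemma just stated), and the fact that every wall of $\Sigma$ appears in exactly one $T_i$ (noted at the end of the construction of the $T_i$'s). Once that is clear, $\Psi$ is forced to be the unique $W$-equivariant bijection extending the identification of walls with edges.
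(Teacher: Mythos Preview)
Your proposal is correct and follows essentially the same route as the paper: identify roots of the product with roots of individual factors, send a root $\alpha$ of $\Sigma$ with wall $H$ to the half of $T_i\setminus\{e\}$ consisting of those connected components of $|\Sigma|\setminus(W_0\cdot H_i)$ lying in $|\alpha|$, and check bijectivity and equivariance by tracking walls and components. The paper phrases bijectivity by exhibiting the explicit inverse rather than arguing injectivity/surjectivity separately, and its equivariance check is slightly more pedestrian (it picks an auxiliary wall $H'$ to pin down the correct side), but the content is the same.
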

\begin{proof}
 First of all, note that any root in $T_1\times\dots\times T_l$ is just a product $\tld\alpha_i:=T_1\times\dots\times T_{i-1}\times\alpha_i\times T_{i+1}\times\dots\times T_l$, where $\alpha_i$ is a root in $T_i$. Since an element $w\in W$ acts by permutation of the trees, it sends a ``half-tree'' on a ``half-tree'', and hence we see that $W$ acts on $\Phi(T_1\times\dots\times T_l)$.

If $\alpha$ is a root in $\Sigma$, and $H$ its boundary wall, then there is a unique $1\leqslant i\leqslant l$ such that $T_{W_0}(H)=T_i$. Then $\alpha$ is a connected component of $\Sigma\setminus H$, and hence is a union of connected components of $T_{W_0}(H)=T_i$. So it defines a subset of $T_i$; furthermore, this subset is a connected component of $T_i$ deprived of the edge corresponding to $H$. Hence it defines a root $\alpha_i$ in $T_i$, and we can define $\Psi(\alpha)$ to be the root $\tld\alpha_i$ of $T_1\times\dots\times T_l$.

This defines the map $\Psi$ from $\Phi(\Sigma)$ to $\Phi(T_1\times\dots\times T_l)$. This map has an inverse. Indeed, if $\tld\alpha_i$ is a root as above, so that $\alpha_i$ is a root in $T_i$, then $\alpha_i$ is a connected component of $T_i$ deprived of an edge corresponding to a wall $H$. Then the union of all connected components corresponding to vertices in $\alpha_i$ is a connected component of $\Sigma\setminus H$, so it is a root $\alpha$ in $\Sigma$. Obviously, this map from $\Phi(T_1\times\dots\times T_l)$ to $\Phi(\Sigma)$ is the inverse of $\Psi$.

Furthermore, $\Psi$ is equivariant. Indeed, let $\alpha$ be a root in $\Sigma$ and $\tld\alpha_i=\Psi(\alpha)$, and let $H$ be the boundary wall of $\alpha$. Then $H$ corresponds to an edge $e$ in $T_i$, which is by definition the boundary wall of $\alpha_i$. Hence, if $w\in W$, then $wH$ corresponds, by definition of the action, to the edge $we$ in some $T_j$, which is also the boundary wall of the root $\tld\beta_j:=w\tld\alpha_i$. Moreover, if $H'$ is a wall which corresponds to an edge $e'$ in $T_i$ which is contained in $\alpha_i$, then $H'$ itself is contained in $\alpha$. Then we see that $we'$ is an edge contained in $\beta_j$. By definition, $\Psi(w\alpha)$ is the root of $T_1\times\dots\times T_l$ with boundary wall $T_1\times\dots\times T_{j-1}\times e \times T_{j+1}\times\dots\times T_l$ and which contains $we'$. So there is no choice: $\Psi(w\alpha)$ is equal to $\tld\beta_j=w\tld\alpha_i$, which proves the equivariance of $\Psi$.
\end{proof}

\begin{lemma}\label{lem:embed1}
 Let $W$ be a Coxeter group and $\Sigma$ its Coxeter complex. Then $\Rsph(\Sigma)$ is embedded $W$-equivariantly in $\Rsph(T_1\times\dots\times T_l)=\Rsph(T_1)\times\dots\times\Rsph(T_l)$. 
\end{lemma}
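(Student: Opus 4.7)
The plan is to exhibit an explicit map $\phi\colon \Rsph(\Sigma)\to \Rsph(T_1)\times\cdots\times\Rsph(T_l)$, verify $W$-equivariance, and then derive injectivity from Lemma~\ref{lem:root}.

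To construct $\phi$, fix $R\in\Rsph(\Sigma)$. Since the stabiliser of $R$ in $W$ is a finite reflection group, it admits a common fixed point in $|\Sigma|$, through which every wall of $R$ passes; in particular any two walls of $R$ intersect. The first lemma of Section~\ref{arbres} then forces at most one wall of $R$ to lie in the $W_0$-orbit of each $H_i$. I define $\phi_i(R)$ by cases. If no wall of $R$ lies in $W_0\cdot H_i$, then a minimal gallery between any two chambers of $R$ stays in $R$ (residues being convex) and hence crosses no wall in $W_0\cdot H_i$, so all chambers of $R$ belong to a single connected component of $|\Sigma|\setminus W_0 H_i$, i.e.\ to a single vertex $v_i$ of $T_i$; set $\phi_i(R)=v_i$. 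If a unique wall $H$ of $R$ lies in $W_0\cdot H_i$, it corresponds to an edge $e_i$ of $T_i$ and splits $R$ into two nonempty halves sitting at the two endpoints of $e_i$; set $\phi_i(R)=e_i$. In either case $\phi_i(R)$ is a spherical residue of the tree $T_i$ (chambers and panels being the only such), so $\phi(R):=(\phi_1(R),\dots,\phi_l(R))$ lies in $\Rsph(T_1)\times\cdots\times\Rsph(T_l)$.

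Equivariance follows from the normality of $W_0$ in $W$: for $w\in W$, the walls of $wR$ are the $w$-translates of walls of $R$, and $wH$ lies in $W_0\cdot H_j$ precisely when $H$ lies in $W_0\cdot H_i$, where the index $j$ is determined by the permutation of $\{1,\dots,l\}$ induced by $w$. Transporting the case distinction through $w$ yields $\phi(wR)=w\phi(R)$. For injectivity, I would establish the following key compatibility: for every root $\alpha$ of $\Sigma$ with image $\Psi(\alpha)=\tld\alpha_i$ under Lemma~\ref{lem:root}, one has $R\subset\alpha$ if and only if $\phi_i(R)\subset\alpha_i$. If the boundary wall $H$ of $\alpha$ is not a wall of $R$, both conditions express that the unique component hosting $R$ lies on the $\alpha$-side of $H$; if $H$ is a wall of $R$, both conditions fail, since $R$ straddles $H$ and $\phi_i(R)=e_i$ straddles $\alpha_i$. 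Combining this with the standard fact that a spherical residue, being convex, equals the intersection of the half-apartments containing it, one recovers $R$ from $\phi(R)$.

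The main obstacle is the geometric observation that the walls of a spherical residue pairwise intersect (at its Davis centre): this is precisely what prevents $\phi_i(R)$ from being spread across several vertices of $T_i$ and forces it to be a single vertex or edge. Once this is in place, the rest of the proof amounts to a bookkeeping translation between walls of $\Sigma$ and walls of the product $T_1\times\cdots\times T_l$ via Lemma~\ref{lem:root}.
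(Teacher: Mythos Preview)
Your proof is correct and essentially the same as the paper's: both define the map by the same case distinction (whether a wall in a given $W_0$-orbit meets $R$ or not, assigning a vertex or an edge of $T_i$ accordingly) and both deduce injectivity and equivariance from the identity $\Phi(\psi(R))=\Psi(\Phi(R))$ via Lemma~\ref{lem:root}. The only cosmetic difference is that you phrase the key geometric input as ``walls of $R$ pairwise intersect at its Davis centre'' where the paper works directly with the realization $|R|$ and the observation that if $|R|$ meets a wall it is contained in it.
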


\begin{proof}

Let $R$ be a spherical residue in $\Sigma$. The intersection of the geometric realization of chambers in $R$ is not empty. We see it as the geometric realization of $R$ and we denote it by $|R|$. Note that if $R$ is contained in some root $\alpha$, then $|R|$ is contained in its geometric realization $|\alpha|$; furthermore, $|R|$ is equal to the intersection of all the roots which contain it.

Let $H$ be a wall in $\Sigma$. Assume first that $R$ does not intersect any wall in $W_0.H$. Then $|R|$, being connected, is contained in a unique connected component of $|\Sigma|\setminus W_0.|H|$. So we can associate to $R$ a unique vertex $\psi_H(R)$ in $T_{W_0}(H)$. Now, if $|R|$ intersects some wall in $W_0.H$, then it is contained in this wall. Since all the geometric realization of walls in $W_0.H$ do not intersect pairwise, this wall is unique. Hence we can associate to $R$ the edge of $T_{W_0}(H)$ which corresponds to the wall containing $|R|$.

Thus, we get a map $$\psi_H:\Rsph(\Sigma)\to\Rsph(T_{W_0}(H)).$$
If $T_i=T_{W_0}(H_i)$, the product $\psi:\psi_{H_1}\times\dots\times\psi_{H_l}$ is a map from $\Rsph(\Sigma)$ to $\Rsph(T_1)\times\dots\times\Rsph(T_l)$. 

What is left to prove is that this map is injective and $W$-equivariant. Let $\Phi(R)$ be the set of roots which contain $R$. Note that, if $R\neq S$, then $\Phi(R)\neq\Phi(S)$. Then we see that $\Phi(\psi(R))=\Psi(\Phi(R))$. The injectivity and equivariance of $\Psi$ then proves the equivariance of $\psi$.
%
%
\end{proof}

Using the same kind of arguments, we prove that this embedding extends to an embedding of the combinatorial compactification:

\begin{lemma}\label{lem:embed2}
 There exists an injective, $W$-equivariant map $\phi:\Csph(\Sigma)\to \Csph(T_1)\times\dots\times \Csph(T_l)$, such that $\phi|_{\Rsph(\Sigma)}=\psi$. Furthermore, the image by $\phi$ of a sector in $\Sigma$ is embedded into a product of half-lines and segments.
\end{lemma}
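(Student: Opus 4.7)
The plan is to extend $\psi$ to $\Csph(\Sigma)$ by continuity, using compactness of the target $\Csph(T_1)\times\cdots\times\Csph(T_l)$ and the bijection $\Psi$ of Lemma \ref{lem:root} as the main bookkeeping device. The key preliminary observation, already implicit in the proof of Lemma \ref{lem:embed1}, is a \emph{root compatibility}: for every $R\in\Rsph(\Sigma)$ and every $\alpha\in\Phi(\Sigma)$, one has $R\subset\alpha$ if and only if $\psi(R)\subset\Psi(\alpha)$, since containment in a root of $\Sigma$ is decided by the connected component of $|R|$ in $|\Sigma|$ relative to the $W_0$-orbit of walls defining the corresponding tree, and this is precisely the datum recorded by each $\psi_{H_i}$.

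For $\xi\in\Csph(\Sigma)$, I pick a sequence $(R_n)$ of spherical residues with $R_n\to\xi$. The sequence $\psi(R_n)$ lies in the compact space $\Csph(T_1\times\cdots\times T_l)=\Csph(T_1)\times\cdots\times\Csph(T_l)$, so some subsequence converges to a point $\phi(\xi)$. Applying Proposition \ref{phi} to the Coxeter complex $T_1\times\cdots\times T_l$, the limit is uniquely determined by the set of product-roots eventually containing $\psi(R_n)$; by root compatibility, this set is exactly $\Psi(\Phi_\Sigma(\xi))$. Since $\Phi_\Sigma(\xi)$ determines $\xi$ (Proposition \ref{phi} again) and $\Psi$ is a bijection, $\phi(\xi)$ depends only on $\xi$, giving a well-defined map; running the same argument in reverse (if two points of $\Csph(\Sigma)$ have the same image, their root systems agree, hence they coincide) yields injectivity. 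The equality $\phi|_{\Rsph(\Sigma)}=\psi$ follows from the constant sequence $R_n\equiv R$, and $W$-equivariance is immediate: if $R_n\to\xi$ and $w\in W$, then $wR_n\to w\xi$, and $\psi(wR_n)=w\psi(R_n)\to w\phi(\xi)$, forcing $\phi(w\xi)=w\phi(\xi)$ by uniqueness of the limit.

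For the final claim, let $Q(x,\xi)$ be a sector in $\Sigma$, and let $A$ be an apartment containing it. By Proposition \ref{sectorroots}, $Q(x,\xi)=\bigcap_{\alpha\in\Phi_A(\xi)\cap\Phi_A(x)}\alpha$. Via $\Psi$ and the root compatibility, $\phi(Q(x,\xi))$ is contained in the intersection of the corresponding product-roots $\Psi(\alpha)$; using the description $\Psi(\alpha)=\tld\alpha_i$ from Lemma \ref{lem:root}, this intersection distributes factor by factor, and its $i$-th coordinate projection is the intersection in $T_i$ of the relevant half-trees containing $\psi_{H_i}(x)$, which by Proposition \ref{sectorroots} applied in the tree is the sector $Q(\psi_{H_i}(x),\phi(\xi)_i)$. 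In a tree, such a sector is a geodesic segment when $\phi(\xi)_i\in\Rsph(T_i)$ and a geodesic half-line when $\phi(\xi)_i$ is a point at infinity. Hence $\phi(Q(x,\xi))$ embeds into a product of segments and half-lines.

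The main technical content is the root compatibility of $\psi$ with $\Psi$, which is essentially forced by the construction of $\psi$ through the tree-projections $\psi_{H_i}$; once that dictionary is set up, the rest of the proof is a straightforward transfer of the root-based characterization of boundary points (Proposition \ref{phi}) and of sectors (Proposition \ref{sectorroots}) from $\Sigma$ to the product of trees.
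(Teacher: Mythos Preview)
Your approach is essentially the paper's: both define $\phi(\xi)$ so that $\Phi(\phi(\xi))=\Psi(\Phi(\xi))$, use the root compatibility $R\subset\alpha\Leftrightarrow\psi(R)\subset\Psi(\alpha)$, and handle sectors via Proposition~\ref{sectorroots} factor by factor.

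There is, however, one genuine gap. You extract a convergent subsequence of $\psi(R_n)$ by appealing to compactness of $\Csph(T_1)\times\cdots\times\Csph(T_l)$. But the paper explicitly notes (Section~\ref{encoding}) that the valency $r_i$ of $T_i$ may be infinite; in that case $T_i$ is not locally finite and $\Csph(T_i)$ is only a bordification, not a compactification. So the subsequence step is unjustified in general. The paper avoids this by reversing the order of the argument: it first \emph{defines} $\phi(\xi)$ through the root set $\Psi(\Phi(\xi))$ and then checks directly, using root compatibility, that $\psi(R_n)$ converges to it. You can repair your proof the same way, or alternatively observe that $\Psi$ is a bijection on roots, so for every root $\beta=\Psi(\alpha)$ of the product the membership $\psi(R_n)\in\beta$ is equivalent to $R_n\in\alpha$ and hence stabilizes; this gives convergence of $\psi(R_n)$ without any compactness hypothesis. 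Once this is fixed, the rest of your argument (injectivity via $\Psi$, equivariance via uniqueness of limits, the sector description) matches the paper.
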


\begin{proof}

Let $\xi\in\Csph(\Sigma)$. We define $\phi(\xi)$ by the set of roots which contain it: we put $\Phi(\phi(\xi))=\Psi(\Phi(\xi))$. Note that the restriction $\phi|_{\Rsph(\Sigma)}$ is equal to $\psi$.

We have to prove that this $\phi(\xi)$ is indeed a point in $\Csph(T_1\times\dots\times T_l)$; in other words, that there exists a sequence of spherical residues which converge to it. Let $(R_n)$ be a sequence of spherical residues converging to $\xi$. This means that, for every root $\alpha\in\Phi(\xi)$ and for $n$ large enough, the residue $R_n$ is contained in $\alpha$. In other words, for $n$ large enough, we have $\alpha\in\Phi(R_n)$, and we deduce that $\Psi(\alpha)\in\Psi(\Phi(R_n))$. This means that $\Psi(\alpha)\in\Phi(\psi(R_n))$, so that $(\psi(R_n))$ converges to $\phi(\xi)$.

Once again, the equivariance and injectivity follow from the equivariance and injectivity of $\xi$.

Let $x\in\Rsph(\Sigma)$ and $\xi\in \Csph(\Sigma)$. Let $\psi(x)=(x_1,\dots,x_l)$ and $\psi(\xi)=(\xi_1,\dots,\xi_l)$. We know that $Q(x,\xi)$ is the intersection of the roots in $\Phi(\xi)\cap\Phi(x)$. In particular, the projection of $\psi((Q(x,\xi))$ on $T_1$ is the intersection of the roots in $T_1$ containing both $x_1$ and $\xi_1$. Hence, it is a sector in $T_1$. So it is either a half-line or a segment. Since it is the same for every tree, we get that $\psi(Q(x,\xi))$ is contained in a product of half-lines.
\end{proof}

The proof of the amenability relies on these facts. They allow us to work first with a product on trees instead of arbitrary Coxeter complexes. Since we know that the action on the boundary of a tree is amenable, it makes things much easier. Furthermore, the simple structure of sectors in this product of trees makes them easy to handle.

Let $y$ be a chamber in a sector $Q(x,\xi)$, where $x\in\Rsph(\Sigma)$ and $\xi\in \Csph(\Sigma)$. Let $\psi(x)=(x_1,\dots,x_l)$ and $\psi(y)=(y_1,\dots,y_l)$. Then $\psi(y)$ is in $\psi(Q(x,\xi))$, which is a product of half-lines and segments. So it is uniquely determined by an $l$-tuple of integers $d(x_1,y_1),\dots,d(x_l,y_l)$.

\begin{definition}\label{defposition}
 We call this $l$-tuple of integers  the \emph{position of $y$ towards $(x,\xi)$}.
\end{definition}

\section{Amenability}\label{amenability}

We recall some basic facts about amenability. Our reference is \cite{AR}.

Amenability of a group is a well-known notion, see for example \cite{Pat}. There are many different definitions of amenability, which can fortunately be proved to be equivalent. One of them is the following: a group $G$ is amenable if every continuous action on a compact space has an invariant probability measure. It implies more generally that $G$ fixes a point in every compact, convex subset of the unit ball of a dual Banach space with the weak-* topology; $G$ acting by affine maps. Another definition of amenability is that $G$ has an invariant mean.

This definition can be generalized to actions of groups (in fact, to groupoids). This generalization was first made by Zimmer \cite{Zim}. His first definition was a generalization of the fixed point property: this definition is roughly the existence of a fixed point on a bundle of affine spaces over $S$. This definition can be found for example in \cite[4.3]{Zimmer}. In the case of discrete groups, he also proves his definition to be equivalent to Definition \ref{def:expectation}, which is a generalization of the existence of an invariant mean in some sense. This result was generalized in \cite{AEG} to locally compact groups. In this setting, the notion of mean should be generalized by the notion of a conditional expectation:

\begin{definition}\label{def:expectation} 

Let $X$ be a space equipped with a measure $\mu$. Let $G$ be a locally compact group acting on $X$ in such a way that $\mu$ is quasi-invariant.
 A \emph{conditional expectation} $m:L^\infty(G\times X)\to L^\infty(X)$ is a linear, continuous map such that
\begin{itemize}
 \item $m$ is of norm one;
 \item $m(\mathbf 1_{G\times X})=\mathbf 1_X$;
 \item for any $f\in L^\infty(G\times X)$ and any measurable subset $A\subset X$ we have the equality $m(f (\mathbf 1_{G\times A}))=m(f) \mathbf 1_A$.
\end{itemize}
The action of $G$ on $X$ is called \emph{amenable in the sense of Zimmer's} if there is a $G$-equivariant conditional expectation $m:L^\infty(G\times X)\to L^\infty(X)$ (where $G\times X$ is endowed with the diagonal action $g.(h,x)=(gh,g.x)$)
\end{definition}

This definition does not much take into account the topology of $G$: one only needs the Haar measure on $G$ in order to state it. There are also some notions of amenability which take more topology into account. Unfortunately, these notions are not equivalent to the amenability in the sense of Zimmer. A naive idea would be to require the existence of a continuous system of probability measures $m:X\to \PR(G)$ which is $G$-equivariant. However, this notion would be too strong: in the case of a group acting on a point, this would be equivalent to having a finite Haar measure, which means that the group would be compact. So, the condition has to be relaxed. It can be done by requiring the existence of continuous maps $m:X\to \PR(G)$ which are \emph{asympotically} equivariant:

\begin{definition}\label{critere}
Let $X$ be a locally compact space endowed with a continuous action of a locally compact group $G$. The action of $G$ on $X$ is said to be \emph{topologically amenable} if there exists a sequence of continous maps $m_n:X\to\PR(G)$ such that $\Vert m_n(g.x)-g.m_n(x)\Vert$ tends to $0$ uniformly on every compact subset of $G\times X$.
\end{definition}

This is the definition we will use. Here the norm we use on $\PR(G)$ is the total variation, which can be seen as the dual norm of the norm on continuous compactly supported functions. The continuity of $m_n$ is to be understood with respect to the weak-* topology on $\PR(G)$. As we said before, topological amenability is not the same as amenability in the sense of Zimmer. In fact, it is a stronger notion:

\begin{proposition}[{\cite[Proposition 3.3.5]{AR}}]
Let $X$ be locally compact space with a continuous action of the locally compact group $G$. If the action of $G$ on $X$ is topologically amenable, then for every quasi-invariant Borel measure $\mu$ on $X$, the action of $G$ on $X$ is amenable in the sense of Zimmer.\begin{flushright}$\square$\end{flushright}
\end{proposition}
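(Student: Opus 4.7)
The plan is to construct the required equivariant conditional expectation $m$ as a weak-$*$ ultralimit of the natural (non-equivariant) conditional expectations attached to the almost-equivariant sequence $m_n:X\to\PR(G)$ provided by topological amenability.

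\textbf{Step 1.} For each $n$ define $T_n:L^\infty(G\times X)\to L^\infty(X)$ by
$$
T_n(f)(x):=\int_G f(g,x)\,dm_n(x)(g).
$$
Because each $m_n(x)$ is a probability measure, $T_n$ is a well-defined linear contraction with $T_n(\mathbf 1_{G\times X})=\mathbf 1_X$, and a direct computation gives the module property $T_n(f\,\mathbf 1_{G\times A})=T_n(f)\,\mathbf 1_A$. Thus each $T_n$ is a conditional expectation in the sense of Definition \ref{def:expectation}; only equivariance is missing.

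\textbf{Step 2.} Measure the defect of equivariance. With the convention $(g\cdot f)(h,x)=f(g^{-1}h,g^{-1}x)$, the change of variable $h\mapsto gh$ inside the integral defining $T_n(g\cdot f)(x)$ yields
$$
T_n(g\cdot f)(x)-(g\cdot T_n(f))(x)=\int_G f(h,g^{-1}x)\,d\bigl((g^{-1})_*m_n(x)-m_n(g^{-1}x)\bigr)(h),
$$
whose absolute value is bounded by $\|f\|_\infty\cdot\|(g^{-1})_*m_n(x)-m_n(g^{-1}x)\|_{TV}$. Topological amenability (Definition \ref{critere}) is exactly the statement that this right-hand side tends to $0$ uniformly as $(g,x)$ varies over a compact subset of $G\times X$.

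\textbf{Step 3.} Take a weak-$*$ ultralimit. Since $\mu$ is quasi-invariant we may work with $L^\infty(X)=L^1(X,\mu)^*$. Fix a non-principal ultrafilter $\mathcal U$ on $\N$ and set $m(f):=\lim_{n\to\mathcal U}T_n(f)$ in the weak-$*$ topology; Banach-Alaoglu produces the ultralimit inside the $\|f\|_\infty$-ball. Linearity, contractivity and $m(\mathbf 1)=\mathbf 1$ pass to the ultralimit, and the module identity survives because multiplication by $\mathbf 1_A\in L^\infty(X)$ is weak-$*$ continuous. For equivariance, given $\varphi\in L^1(X,\mu)$ and $\eps>0$ choose a compact $K\subset X$ with $\int_{K^c}|\varphi|\,d\mu<\eps$; the local uniform convergence from Step 2 on $\{g\}\times K$ together with the uniform bound $2\|f\|_\infty$ on the complement yields
$$
\int_X\bigl(T_n(g\cdot f)-g\cdot T_n(f)\bigr)\varphi\,d\mu\xrightarrow{n\to\infty}0,
$$
and passing to $\mathcal U$ gives $m(g\cdot f)=g\cdot m(f)$ in $L^\infty(X)$.

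\textbf{Main obstacle.} The delicate point is that the almost-equivariance assumption is purely topological (total-variation convergence of $\PR(G)$-valued maps on compact subsets), whereas the conclusion lives in the measurable framework of $(X,\mu)$. The bridge is the observation in Step 3 that pointwise-uniform convergence on compact sets transfers to weak-$*$ convergence against $L^1$ test functions via a tightness argument, and that the $G$-action on $L^\infty(X)$ is weak-$*$ continuous (using the modular cocycle of $\mu$). Once this duality setup is in place, the rest of the construction is essentially formal.
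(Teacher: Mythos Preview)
The paper does not give its own proof of this proposition: it is quoted from \cite[Proposition 3.3.5]{AR} and closed immediately with a $\square$. So there is nothing in the paper to compare your argument against; the author simply imports the result.

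Your outline is the standard route and is structurally correct. One technical point you skipped in Step~1 deserves mention: the probability measures $m_n(x)$ supplied by Definition~\ref{critere} need not be absolutely continuous with respect to Haar measure, so the integral $\int_G f(g,x)\,dm_n(x)(g)$ is not a priori well-defined for an \emph{equivalence class} $f\in L^\infty(G\times X,\mathrm{Haar}\otimes\mu)$. The usual fix is to convolve each $m_n(x)$ on the right by a fixed continuous compactly supported probability density on $G$; this yields measures with $L^1$-densities, preserves continuity in $x$, and preserves the almost-equivariance estimate since left translation commutes with right convolution. With that adjustment your Steps~1--3 go through. The tightness step in Step~3 also tacitly assumes $\mu$ is inner regular, a mild hypothesis normally subsumed in this setting.
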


Amenability of groups is well-behaved under taking subgroups and quotients. The corresponding properties, suitably defined, are still true for amenable actions. In particular, \cite[Proposition 5.1.1]{AR} proves:

\begin{proposition}\label{subgroup}
 Let $G$ be a locally compact group acting on a space $X$. Let $H$ be a closed subgroup of $G$. Assume the action of $G$ on $X$ is topologically amenable. Then the action of $H$ on $X$ is topologically amenable.\begin{flushright}$\square$\end{flushright}
\end{proposition}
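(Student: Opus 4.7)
The idea is to transfer the amenability from $G$ to $H$ by post-composing the given $G$-amenable system $m_n\colon X\to\PR(G)$ with an averaging operator $T\colon\PR(G)\to\PR(H)$ enjoying three properties: (i) $T$ is continuous from $\PR(G)$ to $\PR(H)$ for the weak-$*$ topologies; (ii) $T$ is $H$-equivariant, $T(h\cdot\mu)=h\cdot T\mu$ for all $h\in H$ and $\mu\in\PR(G)$, where $h\cdot$ denotes left translation on either space of measures; (iii) $T$ is norm-decreasing, $\|T\mu\|\leqslant\|\mu\|$ in total variation. Granting such a $T$, set $m_n^H := T\circ m_n$. Continuity of $m_n^H$ is immediate from (i), and for every $(h,x)\in H\times X$ we have
$$\|m_n^H(h.x)-h.m_n^H(x)\| = \|T\bigl(m_n(h.x)-h.m_n(x)\bigr)\| \leqslant \|m_n(h.x)-h.m_n(x)\|,$$
which tends to $0$ uniformly on compact subsets of $H\times X$ because any such compact set is compact in $G\times X$ and $m_n$ is $G$-amenable. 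This is precisely the condition of Definition \ref{critere} for the $H$-action on $X$.

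To build $T$, I would invoke a Bruhat function: a continuous, compactly supported, nonnegative function $\beta\colon G\to\R$ such that
$$\int_H \beta(h^{-1}g)\,dh = 1 \qquad\text{for every } g\in G,$$
where $dh$ is a fixed left Haar measure on $H$. The existence of such a $\beta$ is classical (Bourbaki, \emph{Int\'egration} VII), relying on the paracompactness of $G/H$ to glue local data through a partition of unity. Using $\beta$, define $T$ by duality: for $\varphi\in C_c(H)$ and $\mu\in\PR(G)$, set
$$\int_H \varphi\,d(T\mu) := \int_G \int_H \varphi(h)\,\beta(h^{-1}g)\,dh\,d\mu(g).$$
The inner integral $g\mapsto\int_H\varphi(h)\beta(h^{-1}g)\,dh$ has support contained in $\Supp(\varphi)\cdot\Supp(\beta)$, which is compact in $G$, and is continuous, so $\mu\mapsto T\mu$ is weak-$*$ continuous, giving (i); the normalization of $\beta$ gives both that $T\mu$ is a probability measure and property (iii); and (ii) follows by expanding $T(h_0\cdot\mu)$ and performing the change of variable $h\mapsto h_0 h$ in the inner integral, after which the left-invariance of $dh$ eliminates $h_0$ and one recovers the defining formula for $h_0\cdot T\mu$.

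The only non-formal ingredient in this plan is the existence of the Bruhat function $\beta$, which I would quote rather than reprove; the rest of the argument consists of direct manipulations of integrals. A tempting alternative would be to choose an $H$-equivariant Borel retraction $p\colon G\to H$ arising from a Borel cross-section of $H\backslash G$ and take $T\mu=p_*\mu$; this preserves norms and is automatically $H$-equivariant, but it need not be weak-$*$ continuous because $p$ is only Borel, not continuous. That is precisely the obstruction that the smoothing against the continuous function $\beta$ is designed to overcome, and it is why the Bruhat-function approach is the natural route.
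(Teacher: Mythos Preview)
The paper does not give a proof of this proposition: it simply records the statement with a $\square$ and cites \cite[Proposition 5.1.1]{AR}. So there is no in-paper argument to compare against; your Bruhat-function approach is in fact the standard route underlying the cited reference, and the overall strategy is sound.

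There is, however, one genuine slip. A Bruhat function $\beta$ with $\int_H\beta(h^{-1}g)\,dh=1$ for \emph{all} $g\in G$ can be taken compactly supported only when $H\backslash G$ is compact: if $\Supp(\beta)=K$ is compact, the integral condition forces $G=HK$. In general (and this is what Bourbaki actually provides) one only gets that $\Supp(\beta)\cap HK$ is compact for every compact $K\subset G$. In the example $G=\R^2$, $H=\R\times\{0\}$, $\beta(x,y)=f(x)$ with $f\in C_c(\R)$, $\int f=1$, one sees $\Supp(\beta)=\Supp(f)\times\R$ and the function $F(g)=\int_H\varphi(h)\beta(h^{-1}g)\,dh=(\varphi*f)(x)$ is independent of $y$, hence not in $C_c(G)$ or even $C_0(G)$.

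This does not wreck your argument, only the stated justification of (i). The function $F$ is always bounded by $\|\varphi\|_\infty$ (from the normalisation of $\beta$) and continuous (dominated convergence, using that $\Supp(\varphi)$ is compact in $H$), so $F\in C_b(G)$. That is enough: if $\mu_\alpha\to\mu$ weak-$*$ in $\PR(G)$ with $\mu$ a probability measure, then by inner regularity choose a compact $K$ with $\mu(G\setminus K)<\eps$, take $\chi\in C_c(G)$ with $\mathbf 1_K\leqslant\chi\leqslant 1$, and observe that eventually $\int\chi\,d\mu_\alpha>1-2\eps$, so the net is tight and $\int F\,d\mu_\alpha\to\int F\,d\mu$. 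With this correction your operator $T$ is weak-$*$ continuous on $\PR(G)$, and the rest of your proof goes through verbatim.
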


In the case of a group $G$ acting on a building $X$, in order to define this sequence $(m_n)$, we will, as in \cite{Ka}, define some $G$-equivariant Borel maps $\mu_n:X\times\Csph(X)\to\PR(X)$. Then, if $o\in X$, we see that $\mu'_n=\mu_n(o,\cdot)$ is a map from $\Csph(X)$ to $\PR(X)$ which is Borel and such that $\Vert g.\mu'_n(\xi)-\mu'_n(g\xi)\Vert$ tends to $0$ uniformly on every compact subset of $G\times\Csph(X)$. Then we use the following proposition, which is a particular (easy) case of \cite[Proposition 11]{Oz}. We will use this proposition with $D$ equal to the set of chambers of $X$ and $Y$ equal to $\Csph(X)$.

\begin{proposition}\label{prop:critere}
 Let $G$ be a locally compact, second countable group, acting continuously and properly on a discrete, countable set $D$ and acting continuously on a metric, locally compact space $Y$.

Assume that, for every compact subset $Q\subset G$ and every $\eps >0$, there exists a continuous map $\zeta:K\to \PR(D)$ such that:
$$\sup_{q\in Q}\sup_{x\in K}\Vert \zeta(qx)-q\zeta(x)\Vert\leqslant\eps.$$

Then the action of $G$ on $Y$ is amenable.
\end{proposition}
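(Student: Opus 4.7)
The plan is to build the sequence $m_n : Y \to \PR(G)$ required by Definition \ref{critere} by composing the hypothesized maps $\zeta_n : Y \to \PR(D)$ with a fixed \emph{transfer operator} $T : \PR(D) \to \PR(G)$ constructed from the proper action of $G$ on $D$. The point is that properness of $G\curvearrowright D$ makes each stabilizer $G_d$ a compact subgroup, which allows one to lift a probability measure on $D$ to an honest probability measure on $G$.

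First I would fix the transfer operator. Choose orbit representatives $o_{\mathcal O}$ for $G\curvearrowright D$ and, for each $d$ in the orbit $\mathcal O$, a distinguished $g_d\in G$ with $g_d\cdot o_{\mathcal O}=d$; the set $\{h\in G : h\cdot o_{\mathcal O}=d\}$ is then the compact coset $g_d\, G_{o_{\mathcal O}}$. Let $\lambda_d$ be its normalized Haar measure and define $T$ by sending $\delta_d$ to $\lambda_d$ and extending linearly. Two properties will be essential: (i) $T$ is a contraction for the total variation norm, and (ii) $T$ is \emph{exactly} $G$-equivariant, because $g_*\lambda_d$ and $\lambda_{g d}$ are both Haar probabilities on the same left coset of $G_{o_{\mathcal O}}$ and hence coincide.

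Next I would assemble the sequence. Fix exhaustions $Q_n\nearrow G$, $K_n\nearrow Y$ by compacta and $\eps_n\to 0$. Applying the hypothesis to the enlarged compact $K_n':=K_n\cup Q_nK_n$ yields a continuous map $\zeta_n$ satisfying $\sup_{q\in Q_n,\, x\in K_n}\|\zeta_n(qx)-q\zeta_n(x)\|\leq\eps_n$; if needed, Dugundji's extension theorem (valid because $\PR(D)$ is a closed convex subset of the locally convex space $\ell^1(D)$ and $Y$ is metrizable) gives a continuous extension to all of $Y$. Set $m_n := T\circ\zeta_n$. For any $f\in C_c(G)$, properness of $G\curvearrowright D$ ensures that $\{d\in D : \mathrm{supp}(\lambda_d)\cap\mathrm{supp}(f)\neq\varnothing\}$ is finite, so
\[
\int_G f\, dm_n(y) \;=\; \sum_{d\in D}\zeta_n(y)(d)\int_G f\, d\lambda_d
\]
is a finite sum of continuous functions of $y$, whence $m_n : Y\to\PR(G)$ is weak-$*$ continuous. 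Contraction and exact equivariance of $T$ then translate the hypothesis directly into
\[
\|m_n(qx) - q\,m_n(x)\| \;=\; \|T(\zeta_n(qx)) - T(q\zeta_n(x))\| \;\leq\; \|\zeta_n(qx) - q\zeta_n(x)\| \;\leq\; \eps_n
\]
for $(q,x)\in Q_n\times K_n$, verifying Definition \ref{critere}.

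The main technical point is that the equivariance of $T$ must be \emph{exact}, not approximate: any fixed discrepancy between $g_*\lambda_d$ and $\lambda_{gd}$ would dominate the $\eps_n$ from the hypothesis and spoil the conclusion. The choice of $\lambda_d$ as the full Haar probability on the whole coset (rather than some ad hoc section of the action) is precisely what delivers this exactness. A secondary but routine obstacle is the continuous extension of $\zeta_n$ from its natural compact domain to $Y$ while staying inside $\PR(D)$ rather than drifting into the ambient $\ell^1(D)$; this is exactly the situation covered by Dugundji's theorem for convex targets.
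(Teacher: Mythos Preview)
Your proposal is correct and follows essentially the same route as the paper: both construct the required maps by composing $\zeta$ with the transfer $\delta_d\mapsto\lambda_d$, where $\lambda_d$ is the normalized Haar measure on the coset $\{g:g\cdot o_{\mathcal O}=d\}$ of the compact stabilizer, and both rely on the exact $G$-equivariance and contractivity of this transfer. The only difference is presentational: the paper works with a single $(Q,\eps)$ and leaves the exhaustion implicit, whereas you spell out the sequence and address the extension from a compact domain to all of $Y$ via Dugundji (a point the paper does not treat, since in its application $\zeta$ is already globally defined).
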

Here $\PR(D)$ is endowed with the weak-* topology, which, in this case, coincides in fact with the norm topology.
\begin{proof}
Let $Q$ be a compact subset of $G$, $\eps>0$, and $\zeta:Y\to \PR(D)$ the associated map.

Let $V$ be a fundamental domain for the $G$-action  on $D$, with  associated projection $v:D\to V$.
Let $a\in D$, $g\in G$ and $b=ga$. Let $G_a$ be the stabilizer of $a$ and let $H_a:=\Haar(G_a)$ denote its Haar measure, viewed as a measure on $G$ with support in $G_a$. Then $g.H_a$ is a probability measure on $G$ which only depends on $a$ and $b:=ga$, we denote it by $H_a^b$.

We define $\mu:Y\to\PR(G)$ by the following formula:
$$\mu(x) = \sum_{a\in D} \zeta(x)(a) H^a_{v(a)}.$$

Then $\mu$ is continuous, even for the norm topology on $\PR(G)$, since $\Vert\mu_{y}-\mu_x\Vert\leqslant\Vert\zeta_{y}-\zeta_x\Vert$
and $\zeta$ is continuous with respect to the norm topology on $\PR(D)$.

Furthermore we have, for $q\in Q$ and $x\in K$,
$$q.\mu(x)=\sum_{a\in D}\zeta(x)(a) H^{qa}_{v(a)},$$
so that
\begin{eqnarray*} 
\Vert q\mu(x)-\mu(qx)\Vert&=& \Vert\sum_{a\in D} (\zeta(x)(a)-\zeta(qx)(a)) H^{qa}_{v(a)}\Vert\\
&\leqslant& \sum_{a\in D} \vert \zeta(x)(a) - \zeta(qx)(a)\vert\\
&\leqslant& \eps,
\end{eqnarray*}

which proves the proposition.
\end{proof}
\begin{corollary}\label{cor:moyennable}
 Let $X$ be a building and $G$ be a locally compact subgroup of $\Aut(X)$ which acts properly on $\Rsph(X)$. Assume that there exists a sequence of $G$-equivariant continuous maps $\mu_n:\Rsph(X)\times\Csph(X)\to\PR(\Rsph(X))$  such that $\Vert\mu_n(x,\xi)-\mu_n(x',\xi)\Vert$ tends to $0$ uniformly on $\Csph(X)$ for every $x,x'\in \Rsph(X)$.

Then the action of $G$ on $\Csph(X)$ is amenable.
\end{corollary}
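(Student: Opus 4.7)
The plan is to reduce the statement to Proposition \ref{prop:critere} with $D = \Rsph(X)$, which is a countable discrete set on which $G$ acts properly by hypothesis, and $Y = \Csph(X)$. So it suffices to produce, for every compact $Q \subset G$ and every $\eps > 0$, a continuous map $\zeta : \Csph(X) \to \PR(\Rsph(X))$ such that $\sup_{q \in Q} \sup_{\xi \in \Csph(X)} \Vert \zeta(q\xi) - q\zeta(\xi) \Vert \leqslant \eps$.

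First I would fix a basepoint $o \in \Rsph(X)$ and define, for each $n$, the map $\mu'_n : \Csph(X) \to \PR(\Rsph(X))$ by $\mu'_n(\xi) = \mu_n(o,\xi)$. Continuity of $\mu'_n$ is inherited from $\mu_n$. The $G$-equivariance of $\mu_n$ gives
\begin{equation*}
q \cdot \mu'_n(\xi) = q \cdot \mu_n(o,\xi) = \mu_n(qo, q\xi), \qquad \mu'_n(q\xi) = \mu_n(o, q\xi),
\end{equation*}
so that
\begin{equation*}
\Vert \mu'_n(q\xi) - q \cdot \mu'_n(\xi) \Vert = \Vert \mu_n(o, q\xi) - \mu_n(qo, q\xi) \Vert.
\end{equation*}

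The key observation is the following. Since $G$ acts properly on the discrete set $\Rsph(X)$, for any compact $Q \subset G$ the orbit $Q \cdot o$ is a finite subset of $\Rsph(X)$. By the hypothesis of the corollary, for every fixed $x' \in \Rsph(X)$ the quantity $\sup_{\eta \in \Csph(X)} \Vert \mu_n(o, \eta) - \mu_n(x', \eta) \Vert$ tends to $0$ as $n \to \infty$. Taking the maximum over the finite set $\{x' = qo : q \in Q\}$, we may choose $n$ large enough so that this maximum is at most $\eps$. For this $n$, setting $\eta = q\xi$ and taking the supremum over $q \in Q$ and $\xi \in \Csph(X)$, we obtain
\begin{equation*}
\sup_{q \in Q}\sup_{\xi \in \Csph(X)} \Vert \mu'_n(q\xi) - q \cdot \mu'_n(\xi) \Vert \leqslant \max_{x' \in Q \cdot o} \sup_{\eta \in \Csph(X)} \Vert \mu_n(o,\eta) - \mu_n(x',\eta) \Vert \leqslant \eps.
\end{equation*}

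Setting $\zeta := \mu'_n$ gives a continuous map verifying the hypothesis of Proposition \ref{prop:critere}, and the corollary follows. There is no substantive obstacle; the only subtle point is the passage from a pointwise hypothesis in the first variable (``for every $x,x'$'') to a uniform estimate over $q \in Q$, which works precisely because properness of the $G$-action on the discrete set $\Rsph(X)$ forces the orbit $Q \cdot o$ to be finite, so that a finite maximum of convergent sequences is itself convergent.
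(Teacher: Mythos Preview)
Your proof is correct and follows essentially the same approach as the paper: fix a basepoint $o$, set $\mu'_n(\xi)=\mu_n(o,\xi)$, use the $G$-equivariance of $\mu_n$ to rewrite $\Vert \mu'_n(q\xi)-q\mu'_n(\xi)\Vert$ as $\Vert \mu_n(o,q\xi)-\mu_n(qo,q\xi)\Vert$, observe that properness makes $Q\cdot o$ finite so that the uniform convergence hypothesis applies, and conclude via Proposition~\ref{prop:critere}. Your write-up is in fact a bit more explicit than the paper's about why the finiteness of $Q\cdot o$ is the point that makes the argument work.
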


\begin{proof}
Fix a spherical residue $x$, and let $m_n(\xi)=\mu_n(x,\xi)$. Then $m_n$ is a continuous map from $\Csph(X)$ to $\PR(\Rsph(X))$, and furthermore we have $$\Vert m_n(g\xi) - g.m_n(\xi)\Vert = \Vert\mu_n(x,g\xi)-\mu_n(gx,g\xi)\Vert.$$
If $g$ is in a compact subset of $G$, then $gC$ only runs through a finite number of chambers, so we see that $\lim_n\Vert m_n(g\xi)-g.m_n(\xi)\Vert=0$ uniformly on every compact of $G\times\Csph(X)$. In view of Proposition \ref{prop:critere}, this proves that the action of $G$ on $\Csph(X)$ is amenable.

\end{proof}

We proceed in two steps to define the maps $\mu_n$: first, we define them in an apartment (Section \ref{section:amenCox}, then we glue them to get a measure on the whole building (Section \ref{section:amenImm}.

\section{Amenability for Coxeter complexes}\label{section:amenCox}

Let $W$ be a Coxeter group. We want to prove that the action of $W$ on the combinatorial compactification $\Csph(\Sigma)$ of its Coxeter complex $\Sigma$ is amenable. In view of Definition \ref{critere}, we have to define a sequence of continuous functions $m_n:\Csph(\Sigma)\to\PR(W)$ which are asymptotically equivariant. As in Corollary \ref{cor:moyennable}, we construct in fact a sequence of $W$-equivariant map $\mu_n:\Rsph(\Sigma)\times\Csph(\Sigma)\to\PR(\Rsph(\Sigma))$ such that 
$$\lim_{n\to +\infty}\Vert\mu_n(x,\xi)-\mu_n(x',\xi)\Vert=0,$$
 uniformly on $\xi\in\Csph(\Sigma)$.

The measures we define here will be later glued together in order to get measures on a given thick building. To be able to do this gluing, we need in fact a more precise information on the support of these measures.

\begin{proposition}\label{appt}
There exists a sequence of continuous $W$-equivariant maps $$\lambda_n:\Sigma\times\Csph(\Sigma)\to\PR(\Sigma),$$
such that, for any $n\in \N$:
\begin{enumerate}[(i)]
 \item  For any $x\in\Rsph(\Sigma)$ and $\xi\in \Csph(\Sigma)$, we have: $\Supp(\lambda_n(x,\xi))\subset Q(x,\xi)$
 \item For any $x,x'\in\Rsph(\Sigma)$, we have $\lim_n \Vert\lambda_n(x,\xi)-\lambda_n(x',\xi)\Vert=0$, uniformly in $\xi$.
\end{enumerate}
\end{proposition}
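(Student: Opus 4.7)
The plan is to reduce the construction on $\Sigma$ to a construction on a product of trees, exploiting the $W$-equivariant injection $\psi : \Rsph(\Sigma) \hookrightarrow \Rsph(T_1) \times \cdots \times \Rsph(T_l)$ of Lemma \ref{lem:embed1} (and its extension $\phi$ to $\Csph(\Sigma)$ of Lemma \ref{lem:embed2}). On each tree, Kaimanovich-type measures on sectors are available; these will be combined via the position parametrization of Definition \ref{defposition}.

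\textbf{Step 1 (Amenability for the individual trees).} For each tree $T_i$, construct a continuous, $\Aut(T_i)$-equivariant map $\lambda_n^{(i)} : \Rsph(T_i) \times \Csph(T_i) \to \PR(\Rsph(T_i))$ with $\Supp \lambda_n^{(i)}(x_i,\xi_i) \subset Q(x_i,\xi_i)$ and
$$\lim_{n\to\infty}\bigl\|\lambda_n^{(i)}(x_i,\xi_i) - \lambda_n^{(i)}(x_i',\xi_i)\bigr\| = 0$$
uniformly in $\xi_i$. A natural candidate is the uniform measure on the first $n$ spherical residues of $Q(x_i,\xi_i)$ (a half-line when $\xi_i$ is at infinity, a segment when $\xi_i$ is a residue, by Proposition \ref{sectorroots}). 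Two combinatorial sectors toward the same $\xi_i$ merge after at most $d_{T_i}(x_i,x_i')$ steps, so a direct count gives $\|\lambda_n^{(i)}(x_i,\xi_i) - \lambda_n^{(i)}(x_i',\xi_i)\| = O\bigl(d_{T_i}(x_i,x_i')/n\bigr)$, uniformly in $\xi_i$.

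\textbf{Step 2 (Assembly on $\Sigma$).} For $x \in \Rsph(\Sigma)$ and $\xi \in \Csph(\Sigma)$, write $\psi(x) = (x_1,\ldots,x_l)$, $\phi(\xi) = (\xi_1,\ldots,\xi_l)$. For $y \in Q(x,\xi)$ with $\psi(y) = (y_1,\ldots,y_l)$, define
$$\lambda_n(x,\xi)(\{y\}) \;=\; \frac{1}{Z_n(x,\xi)}\prod_{i=1}^{l}\lambda_n^{(i)}(x_i,\xi_i)(\{y_i\}),$$
where $Z_n(x,\xi)$ is the total mass. Equivalently, $\lambda_n(x,\xi)$ is the restriction of the product $\bigotimes_i \lambda_n^{(i)}(x_i,\xi_i)$ to the injective image $\psi(Q(x,\xi)) \subset \prod_i Q(x_i,\xi_i)$, renormalized and transported back to $Q(x,\xi) \subset \Sigma$ via the position parametrization.

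\textbf{Step 3 (Verification).} Property (i) is immediate. The $W$-equivariance follows from the $W$-equivariance of $\psi,\phi$ and the canonical nature of the tree measures: $W$ permutes the trees (with $W_0$ acting within each tree), and the product construction respects this. Continuity at fixed $n$ holds because $\lambda_n(x,\xi)$ has finite support, while $\xi \mapsto Q(x,\xi)$ is pointwise continuous (Proposition \ref{induction}) and the tree weights are continuous in $\xi_i$. For property (ii), combine the Step 1 estimates with the tensor inequality $\|\bigotimes \nu_i - \bigotimes \nu_i'\| \leq \sum_i \|\nu_i - \nu_i'\|$ to control the product measures, and use Proposition \ref{intersect} together with Proposition \ref{sectorroots} to show that $Q(x,\xi) \cap Q(x',\xi)$ contains a sector from a common residue; this controls the ratio $Z_n(x,\xi)/Z_n(x',\xi) \to 1$ and yields the required $L^1$-convergence on $\Sigma$.

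\textbf{Main obstacle.} The hard point is to ensure that the convergence in (ii) is uniform in $\xi \in \Csph(\Sigma)$. The combinatorial ``rank'' of $Q(x,\xi)$ -- and hence the growth rate of $Z_n(x,\xi)$ -- depends on the type of $\xi$ (generic boundary point, chemin\'ee-type, or even $\xi \in \Rsph(\Sigma)$), and the most delicate case is when $\xi \in \Rsph(\Sigma)$ lies near $x$: there the sector $Q(x,\xi)$ is finite and small, the naive uniform tree measures become independent of $n$, and the two measures $\lambda_n(x,\xi)$, $\lambda_n(x',\xi)$ can stay at bounded distance. The tree measures $\lambda_n^{(i)}$ therefore need a careful weighting (e.g.\ biased toward the tail of the sector close to $\xi_i$) so that the asymptotic base independence survives uniformly across all $\xi$.
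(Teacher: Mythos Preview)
Your overall architecture (trees $\to$ product of trees $\to$ general $\Sigma$ via the embedding of Lemmas \ref{lem:embed1}--\ref{lem:embed2}) matches the paper's. But the two points you flag as ``obstacles'' are genuine gaps that you do not close, and the paper's proof differs from yours precisely in how it resolves them.

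\emph{The tree measures.} Taking the uniform measure on the first $n$ residues of $Q(x_i,\xi_i)$ fails exactly where you say: when $\xi_i\in\Rsph(T_i)$ and the sector has length $<n$, your measure no longer depends on $n$ and the $O(d(x_i,x_i')/n)$ estimate collapses. The paper avoids this by using Kaimanovich's Ces\`aro trick (Lemma \ref{Ka}): for each $n$ it averages the uniform measures on nested intervals $Z(x,\xi,n,k)$, $k=1,\dots,n$, centred around the point at distance $n$ from $x$, with an adjusted definition when the sector is finite so that for small $k$ the set degenerates to $\{\xi\}$. The sandwiching hypothesis of Lemma \ref{Ka} holds with $\tau=d(x,x')$ in all cases, and the bound $|Z(x,\xi,n,k)|\leqslant 4k+1$ gives uniform convergence across all $\xi$, finite sectors included.

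\emph{Passing from the product of trees to $\Sigma$.} The restriction-and-renormalize step is the real problem. The image $\psi(Q(x,\xi))$ is in general a thin subset of the box $\prod_i Q(x_i,\xi_i)$, and the normalizing constant $Z_n(x,\xi)$ depends on its shape, which varies with $\xi$; you give no argument that $Z_n(x,\xi)/Z_n(x',\xi)\to 1$ uniformly, and this is not obvious. The paper does \emph{not} renormalize. Instead, for each convex $Y\subset\Rsph(\Sigma)$ contained in a sector it builds, by induction on the number of trees, a mass- and positivity-preserving operator $S_Y:\ell^1(\tld Y)\to\ell^1(Y)$ (where $\tld Y$ is the corresponding box in $\Rsph(\tld\Sigma)$), which sweeps the product measure onto $Y$ by collapsing each tree coordinate onto the range occupied by $Y$. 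Because $S_Y$ has norm $1$ and is compatible with restriction to convex subsets ($S_Z=S_Y|_{\ell^1(\tld Z)}$), one can split $\|\lambda_n(x,\xi)-\lambda_n(y,\xi)\|$ over $Q(x,\xi)\setminus Q(y,\xi)$, $Q(y,\xi)\setminus Q(x,\xi)$ and $Q(x,\xi)\cap Q(y,\xi)$, and bound each piece directly by the corresponding piece of $\|\tld\lambda_n(x,\xi)-\tld\lambda_n(y,\xi)\|$. No normalization constant ever appears.

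A small further point: since $W$ permutes the trees $T_i$ rather than acting inside each, your product construction is only $W_0$-equivariant, not $W$-equivariant. The paper fixes this at the very end by averaging over the finite set $W/W_0$.
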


When $Z$ is a finite subset of some discrete set $Y$, let us denote by $m_Z$ the measure defined on $Y$ by $m_Z(A)=\frac{|A\cap Z|}{|Z|}$. The proof we make is inspired from \cite[Theorem 1.33]{Ka}.  More precisely, we use the following lemma, which can be proved by a simple calculation:
 
 \begin{lemma}[See {\cite[Lemma 1.35]{Ka}}]\label{Ka}
 Let $\{Z_k\}_{k\geqslant 1}$ et $\{Z'_k\}_{k\geqslant 1}$ be increasing sequences of finite subsets of some discrete set $Y$. Assume there is an integer $\tau>0$ such that, for any $k\geqslant 1$:
 \begin{equation}
 Z_k\subset Z'_{k+\tau},\hspace{1cm}Z'_k\subset Z_{k+\tau}. \label{sandwich}
 \end{equation}
 Let us define the measures:
 $$\lambda_n=\frac 1 n \sum_{k=1}^n m_{Z_k},\hspace{1cm}\lambda'_n=\frac 1 n \sum_{k=1}^n m_{Z'_k}.$$
Then $$\Vert \lambda_n-\lambda'_n\Vert\leqslant \frac{2\tau} n + \frac{4(n-\tau)} n\left[1-\left(\frac {|Z_1|}{|Z_{n+\tau}|}\right)^{\frac{2\tau}{n-\tau}}\right],$$
for all $n>\tau$. 
 \end{lemma}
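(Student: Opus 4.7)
The plan is to bound $\lambda_n-\lambda'_n$ termwise and then turn an arithmetic sum into a geometric one via AM--GM. The only concrete computation we need about uniform measures is that, whenever $A\subset B$ are finite, $\|m_A-m_B\|=2(1-|A|/|B|)$ (just split into $y\in A$ and $y\in B\setminus A$). Everything else is elementary manipulation of the ratios $|Z_k|/|Z_{k+\tau}|$.

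First, I would compare each pair $m_{Z_k}$ and $m_{Z'_k}$ through the common superset $Z_{k+\tau}$, using the sandwich hypothesis $Z'_k\subset Z_{k+\tau}$ (and $Z_k\subset Z_{k+\tau}$, which is free because $Z_\bullet$ is increasing). By the triangle inequality together with the closed form above,
\[
\|m_{Z_k}-m_{Z'_k}\| \;\leqslant\; 2\!\left(1-\tfrac{|Z_k|}{|Z_{k+\tau}|}\right) + 2\!\left(1-\tfrac{|Z'_k|}{|Z_{k+\tau}|}\right) \;\leqslant\; 4\!\left(1-\tfrac{\min(|Z_k|,|Z'_k|)}{|Z_{k+\tau}|}\right).
\]
For $k>\tau$, the other sandwich inclusion $Z_{k-\tau}\subset Z'_k$ together with monotonicity of $Z_\bullet$ gives $\min(|Z_k|,|Z'_k|)\geqslant |Z_{k-\tau}|$; for $k\leqslant\tau$, I would just use the crude bound $\|m_{Z_k}-m_{Z'_k}\|\leqslant 2$. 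Averaging over $k=1,\dots,n$ therefore yields
\[
\|\lambda_n-\lambda'_n\| \;\leqslant\; \tfrac{2\tau}{n}+\tfrac{4}{n}\sum_{k=\tau+1}^{n}\!\left(1-\tfrac{|Z_{k-\tau}|}{|Z_{k+\tau}|}\right),
\]
which already isolates the leading $\tfrac{2\tau}{n}$ term with the right constant.

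Next, write $x_k=|Z_{k-\tau}|/|Z_{k+\tau}|\in(0,1]$ and apply AM--GM to the $n-\tau$ terms: $\sum_{k=\tau+1}^{n}(1-x_k)\leqslant (n-\tau)\bigl(1-(\prod_{k=\tau+1}^n x_k)^{1/(n-\tau)}\bigr)$. Reindexing with $j=k-\tau$ turns the product into $\prod_{j=1}^{n-\tau}|Z_j|/|Z_{j+2\tau}|$; this is the step where telescoping happens. When $n>3\tau$ the overlapping indices cancel and the product reduces to $\prod_{j=1}^{2\tau}|Z_j|/|Z_{j+n-\tau}|$, a product of exactly $2\tau$ ratios. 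Using $|Z_j|\geqslant|Z_1|$ and $|Z_{j+n-\tau}|\leqslant|Z_{n+\tau}|$ for each $j\in\{1,\dots,2\tau\}$, this product is at least $(|Z_1|/|Z_{n+\tau}|)^{2\tau}$, so its $(n-\tau)$-th root is at least $(|Z_1|/|Z_{n+\tau}|)^{2\tau/(n-\tau)}$, giving exactly the claimed bound after subtracting from $1$.

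For the remaining regime $\tau<n\leqslant 3\tau$ (where the index sets do not overlap), the same factorwise comparison $|Z_j|\geqslant|Z_1|$ and $|Z_{j+2\tau}|\leqslant|Z_{n+\tau}|$ gives $(\prod)^{1/(n-\tau)}\geqslant |Z_1|/|Z_{n+\tau}|$; since here $2\tau/(n-\tau)\geqslant 1$ and the ratio lies in $(0,1]$, we have $|Z_1|/|Z_{n+\tau}|\geqslant (|Z_1|/|Z_{n+\tau}|)^{2\tau/(n-\tau)}$, so the same final inequality persists. The main obstacle is really just the bookkeeping in this last step: verifying that the cancellation in $\prod_{j=1}^{n-\tau}|Z_j|/|Z_{j+2\tau}|$ leaves precisely $2\tau$ uncancelled boundary factors (and not $\tau$ or $4\tau$), because that is what produces the exponent $2\tau/(n-\tau)$ and the correct coefficient $4$ in front of the second summand.
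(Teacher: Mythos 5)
Your argument is correct, and it is complete at each step. The paper itself does not prove this lemma; it only cites \cite[Lemma 1.35]{Ka} with the remark that ``it can be proved by a simple calculation,'' so there is no in-paper proof to compare against. Your proof is the natural simple calculation one would expect: the identity $\Vert m_A - m_B\Vert = 2(1-|A|/|B|)$ for $A\subset B$, a triangle inequality through the common superset $Z_{k+\tau}$, the lower bound $\min(|Z_k|,|Z'_k|)\geqslant |Z_{k-\tau}|$ from the other sandwich inclusion, AM--GM to convert the arithmetic mean of $1-x_k$ into $1-(\prod x_k)^{1/(n-\tau)}$, and then the telescoping count. I checked the bookkeeping: when $n>3\tau$, the product $\prod_{j=1}^{n-\tau}|Z_j|/|Z_{j+2\tau}|$ telescopes to exactly $\prod_{j=1}^{2\tau}|Z_j|/|Z_{n-\tau+j}|$ (indices $2\tau+1,\dots,n-\tau$ cancel, leaving $2\tau$ numerator factors and $2\tau$ denominator factors), and monotonicity bounds each factor below by $|Z_1|/|Z_{n+\tau}|$, giving the exponent $2\tau/(n-\tau)$. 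Your separate treatment of the range $\tau<n\leqslant 3\tau$, where the product has only $n-\tau\leqslant 2\tau$ factors and no cancellation occurs, is also correct: there the root of the product is at least $|Z_1|/|Z_{n+\tau}|$, which dominates $(|Z_1|/|Z_{n+\tau}|)^{2\tau/(n-\tau)}$ since the exponent is $\geqslant 1$ and the base lies in $(0,1]$. The constants $2\tau/n$ and $4(n-\tau)/n$ both emerge with the right values.
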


In particular, if $\lim_n |Z_n|^{1/n}=0$, then $\lim_n\Vert \lambda_n-\lambda'_n\Vert=0$.

We divide the proof of Proposition \ref{appt} into three steps. We first deal with the case of a tree, for which the amenability is a particular case of \cite{Ka} for example (but we need to prove a little more to control the support). Then we use this particular case to deal with products of trees. Finally, we use the embedding of a general Coxeter complex in a product of trees described in Section \ref{encoding} in order to prove Proposition \ref{appt}.

\subsection{The case of  trees}

If $W=(\Z/2Z)^{\ast r}$, the Coxeter complex of $W$ is a regular tree $T$ of valency $r$. In this case, $\Rsph(T)$ is equal to the set of vertices and edges of $T$. The combinatorial boundary $\Csph(T)$ is equal to the union of $\Rsph(T)$ with the usual boundary $\partial_\infty T$.

\begin{lemma}
 Let $T$ be a regular tree of valency $r\in \N\cup\{\infty\}$. Let $W=(\Z/2\Z)^{\ast r}$. There exist $W$-equivariant maps $$\lambda_n:\Rsph(T)\times \Csph(T)\to\PR(\Rsph(T)),$$
such that, for any $n\in \N$, $x\in\Rsph(T)$ and $\xi\in \Csph(T)$:
\begin{enumerate}[(i)]
 \item  For any $n\in \N$, $x\in\Rsph(T)$ and $\xi\in \Csph(T)$ we have: $\Supp(\lambda_n(x,\xi))\subset Q(x,\xi)$
 \item For any $x,x'\in \Rsph(T)$, we have $\lim_{n\to \infty}\Vert\lambda_n(x,\xi)-\lambda_n(x',\xi)\Vert=0$ uniformly in $\xi$.
\end{enumerate}
\end{lemma}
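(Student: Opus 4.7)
Given $x\in\Rsph(T)$ and $\xi\in\Csph(T)$, I will enumerate the sector $Q(x,\xi)$ along the unique geodesic from $x$: write $Q(x,\xi)=\{q_0(x,\xi),q_1(x,\xi),\ldots\}$ in order of increasing $\Rsph$-distance from $x$ (so $q_0(x,\xi)=x$). When $\xi\in\Rsph(T)$ the sector is finite, with $N=|Q(x,\xi)|$ residues and $q_{N-1}(x,\xi)=\xi$; in that case I extend the enumeration by the convention $q_k(x,\xi):=q_{N-1}(x,\xi)=\xi$ for all $k\geq N$. Set
\[
\lambda_n(x,\xi)\;:=\;\frac{1}{n}\sum_{k=0}^{n-1}\delta_{q_k(x,\xi)}\;\in\;\PR(\Rsph(T)).
\]
Both $W$-equivariance and the support condition (i) are built into the definition.

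For the continuity in $\xi$, I will invoke Proposition \ref{induction}: if $\xi_j\to\xi$ in $\Csph(T)$ then $Q(x,\xi_j)$ converges pointwise to $Q(x,\xi)$. Because a sector in a tree is a linear geodesic out of $x$, the first $n$ residues of $Q(x,\xi)$ are eventually also the first $n$ residues of $Q(x,\xi_j)$, so $\lambda_n(x,\xi_j)=\lambda_n(x,\xi)$ for $j$ large.

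For the asymptotic equivariance (ii), fix $x,x'\in\Rsph(T)$ and let $\tau:=d_{\Rsph}(x,x')$. For any $\xi\in\Csph(T)$, the two geodesics $Q(x,\xi)$ and $Q(x',\xi)$ meet at a unique ``$Y$-vertex'' $m$ and then coincide on the common subsector $[m,\xi]$; writing $a=d_{\Rsph}(x,m)$ and $b=d_{\Rsph}(x',m)$, one has $a+b=\tau$. A residue-by-residue comparison of $\lambda_n(x,\xi)$ and $\lambda_n(x',\xi)$ isolates three contributions: the two disjoint heads $[x,m)$ and $[x',m)$, each carrying mass $1/n$ in only one of the two measures; the common interior, where the $1/n$-atoms coincide and cancel except on a shift of $|b-a|$ positions; and (in the finite-sector case) the endpoint $\xi$, where the two repetition-mass totals differ by $|N_{x'}-N_x|/n=|b-a|/n$. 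Summing yields the uniform bound
\[
\Vert\lambda_n(x,\xi)-\lambda_n(x',\xi)\Vert\;\leq\;\frac{2\tau}{n},
\]
establishing (ii).

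The main subtlety is handling $\xi\in\Rsph(T)$, where sectors are finite and may be short. A naive ``uniform on the first $n$ elements'' would force $\lambda_n(x,x)=\delta_x$ while $\lambda_n(x',x)$ would remain spread over the whole segment $[x',x]$, leaving a total variation of order $1$ no matter how large $n$ is---in direct conflict with (ii). The repetition convention---piling all leftover atoms onto the terminal residue $\xi$---is what resolves this: both measures then concentrate on $\xi$ at the same rate, and the uniform estimate above survives. This is the ``little more'' that one must add to Kaimanovich's classical amenability argument (which produces such measures without controlling their supports) in order to obtain (i) together with uniform (ii).
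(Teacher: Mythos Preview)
Your argument is correct and takes a genuinely different, more elementary route than the paper. The paper follows Kaimanovich: it places, for each $k\leqslant n$, the uniform measure on the window $Z(x,\xi,n,k)=\{z\in Q(x,\xi):\,n-k\leqslant d(z,x)\leqslant n+k\}$ (with a modification when $\xi\in\Rsph(T)$), and then Ces\`aro-averages over $k$; the asymptotic comparison of $\lambda_n(x,\xi)$ and $\lambda_n(x',\xi)$ is obtained from the abstract ``sandwich'' Lemma~\ref{Ka}, using that $|Z(x,\xi,n,k)|$ grows only linearly in $k$. Your construction instead spreads mass uniformly over the \emph{first} $n$ residues of the sector (piling the overflow onto $\xi$ in the finite case), and you compute the total variation directly by splitting into the two disjoint heads, the shifted common tail, and the endpoint. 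This yields an explicit rate $\Vert\lambda_n(x,\xi)-\lambda_n(x',\xi)\Vert\leqslant C\,d(x,x')/n$ without invoking any auxiliary lemma, which is arguably cleaner for trees; the paper's approach has the advantage of plugging into Kaimanovich's general machinery, but that generality is not needed here. One small bookkeeping point: since $\Rsph(T)$ contains both vertices and edges, the number of residues in the head $[x,m)$ is $2\,d_{\Rsph}(x,m)$ rather than $d_{\Rsph}(x,m)$, so your constant should be $4$ rather than $2$; this of course does not affect the conclusion. Finally, note that your $\lambda_n(x,\xi)(y)$ depends only on the index of $y$ in $Q(x,\xi)$ (and on $|Q(x,\xi)|$ when finite), so the ``position'' property recorded in Remark~\ref{position} --- which the paper needs later to glue measures across apartments --- holds for your construction as well.
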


\begin{proof}
In the following we denote by $d$ the root-distance between residues in $\Rsph(T)$. By definition, $d$ is the usual distance on the set of vertices and on the set of edges, and a vertex is at distance $1/2$ of its adjacent edges. Let $x\in \Rsph(T)$ and $\xi$ a point in $T\cup\Csph(T)$. Note that in this case, $Q(x,\xi)$ is either a segment or a half-line; both being seen as a union of vertices and edges.

If $\xi\in \Csph(T)\setminus\Rsph(T)$ and $k\leqslant n$, we define the following set of vertices: $$Z(x,\xi,n,k)=\{z\in Q(x,\xi)\,|\, n-k \leqslant d(z,x)\leqslant n+k\}.$$
If $\xi\in T$, we define in the same way, for $n-d(x,\xi)\leqslant k\leqslant n$,$$Z(x,\xi,n,k)=\{z\in [x,\xi]\,|\, n-k \leqslant d(z,x)\leqslant n+k\}\cup\{\xi\},$$
and if $k\leqslant n- d(x,\xi)$, we put $Z(x,\xi,n,k)=\xi$.

We define then $\lambda_n(x,\xi)$ as in Lemma \ref{Ka} by a Ces\`aro average $$\lambda_n(x,\xi)=\frac 1 n \sum_{k=1}^nm_{Z(x,\xi,n,k)}.$$

 Let $x$ and $x'$ be two spherical residues in $T$, $\xi\in \Csph(T)$ and $\tau\geqslant d(x,x')$. Then the sequences of sets $\{Z(x,\xi,n,k)\}_{k=1}^n$ and $\{Z(x',\xi,n,k)\}_{k=1}^n$ are ``$\tau$-sandwiched'' in the sense of (\ref{sandwich}). Indeed, let $y\in Z(x,\xi,n,k)$. Assume first that $\xi\not\in \Rsph(T)$ or that $k\geqslant n-d(x,\xi)$. Then $d(x,y)\leqslant d(x,x')+d(x',y)\leqslant \tau+n+k$, and $d(x',y)\geqslant d(x,y)-d(x,x')\geqslant n-k-\tau$. So $y\in Z(x',\xi,n,k+\tau)$. If $k\leqslant n-d(x,\xi)$, then we have $y=\xi$. But then, $d(x',\xi)\geqslant n-k-\tau$, so we have also $y\in Z(x',\xi,n,k+\tau)$. Hence, $Z(x,\xi,n,k)\subset Z(x',\xi,n,k+\tau)$. By symmetry, we have also $Z(x',\xi,n,k)\subset Z(x,\xi,n,k+\tau)$.

 Thus, we can apply Lemma \ref{Ka} to the sequences $\{Z(x,\xi,n,k)\}_{k=1}^n$ and 
$\{Z(x,\xi,n,k)\}_{k=1}^n$. As $|Z(x,\xi,n,k)|\leqslant 4k+1$, we see that $\Vert \lambda_n(x,\xi)-\lambda_n(x',\xi)\Vert$ tends to $0$. Furthermore, the convergence is uniform on $\xi\in\Csph(T)$.

Condition (i) is clear since $Z(x,\xi,n,k)\subset Q(x,\xi)$. Moreover, to check continuity of $\lambda_n$, it is enough to check continuity with respect to the second argument, since $\Rsph(T)$ is a discrete set. To do so, we see that if $\xi_m\in \Csph(T)$ is such that $\lim_m \xi_m=\xi$, then  $Q(x,\xi_m)$ converges to $Q(x,\xi)$, by Proposition \ref{induction}.
 Furthermore, if $\xi_m\in \Rsph(T)$ and if $n\in \N$, $x\in\Rsph(T)$ are fixed, then for  $m$ large enough, we see that $k\geqslant n-d(x,\xi)$ for any $k$. Hence, for $m$ large enough, we have $Z(x,\xi_m,n,k)=Z(x,\xi,n,k)$ for any $k$. This proves that $\lambda_n(x,\xi_m)=\lambda_n(x,\xi)$ for $m$ large enough, thus proving the continuity of $\lambda_n$.

Furthermore, $\lambda_n$ is $\Aut(T)$-equivariant: indeed, if $g\in \Aut(T)$, we have

\begin{eqnarray*}
g.\lambda_n(x,\xi)&=&\frac 1 n \sum_{k=1}^ng.m_{Z(x,\xi,n,k)}\\
&=&\frac 1 n \sum_{k=1}^nm_{Z(gx,g\xi,n,k)}\\
&=&\lambda_n(gx,g\xi).
\end{eqnarray*}

\end{proof}

\subsection{Products of trees}

We extend now this construction to the case when $\Sigma$ is a product of trees $T_1\times\dots\times T_l$. If $T_i$ is of valency $r_i$, such a $\Sigma$ is the Coxeter complex of $W= (\Z/2\Z)^{*r_1}\times\dots\times (\Z/2\Z)^{*r_l}$.

\begin{lemma}
 If $W=(\Z/2\Z)^{*r_1}\times\dots\times (\Z/2\Z)^{*r_l}$, so that $\Sigma$ is a product of regular trees, then the statement of Proposition \ref{appt} holds.
\end{lemma}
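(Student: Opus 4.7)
The plan is to bootstrap from the single-tree case via a product construction, using the identifications $\Rsph(T_1\times\dots\times T_l)=\Rsph(T_1)\times\dots\times\Rsph(T_l)$ and $\Csph(T_1\times\dots\times T_l)=\Csph(T_1)\times\dots\times\Csph(T_l)$ already established in the paper. Given $x=(x_1,\dots,x_l)\in\Rsph(\Sigma)$ and $\xi=(\xi_1,\dots,\xi_l)\in\Csph(\Sigma)$, and writing $\lambda_n^{(i)}$ for the measures produced by the previous lemma on the tree $T_i$, I would simply set
$$\lambda_n(x,\xi)=\lambda_n^{(1)}(x_1,\xi_1)\otimes\dots\otimes\lambda_n^{(l)}(x_l,\xi_l)\in\PR(\Rsph(\Sigma)).$$
Continuity with respect to $\xi$ and $W$-equivariance follow immediately: $W=(\Z/2\Z)^{\ast r_1}\times\dots\times(\Z/2\Z)^{\ast r_l}$ acts diagonally, each factor $\lambda_n^{(i)}$ is $\Aut(T_i)$-equivariant (in particular $(\Z/2\Z)^{\ast r_i}$-equivariant), and the tensor product of continuous maps to $\PR$ (with the weak-$*$ topology) is continuous.

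For condition (i), one uses the fact that in a product of buildings, sectors decompose as products: if $x=(x_i)$ and $\xi=(\xi_i)$, then $Q(x,\xi)=Q(x_1,\xi_1)\times\dots\times Q(x_l,\xi_l)$. This is a direct consequence of the product formulas $\proj_{(R_1,R_2)}(S_1,S_2)=(\proj_{R_1}(S_1),\proj_{R_2}(S_2))$ and the characterization of $Q(x,\xi)$ via pointwise projections, since $\Conv$ also respects products. Hence
$$\Supp(\lambda_n(x,\xi))=\prod_i\Supp(\lambda_n^{(i)}(x_i,\xi_i))\subset\prod_i Q(x_i,\xi_i)=Q(x,\xi).$$

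For condition (ii), I would use the elementary telescoping estimate for tensor products of probability measures: if $\mu_i,\mu_i'$ are probability measures, then
$$\Bigl\Vert\bigotimes_{i=1}^l\mu_i-\bigotimes_{i=1}^l\mu_i'\Bigr\Vert\leqslant\sum_{i=1}^l\Vert\mu_i-\mu_i'\Vert,$$
obtained by inserting intermediate terms $\mu_1'\otimes\dots\otimes\mu_{i-1}'\otimes\mu_i\otimes\dots\otimes\mu_l$ and using that tensoring with a probability measure is an isometry for the total variation norm. Applying this with $x=(x_i)$, $x'=(x_i')$ and fixed $\xi=(\xi_i)$ gives
$$\Vert\lambda_n(x,\xi)-\lambda_n(x',\xi)\Vert\leqslant\sum_{i=1}^l\Vert\lambda_n^{(i)}(x_i,\xi_i)-\lambda_n^{(i)}(x_i',\xi_i)\Vert,$$
and each summand tends to $0$ uniformly in $\xi_i$ by the tree case. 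Since the sum is finite, the convergence is uniform in $\xi\in\Csph(\Sigma)$, which is exactly (ii).

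There is essentially no obstacle here: the product-of-trees case is structurally trivial once the single-tree measures are in hand, because all three relevant objects — residues, compactification, and sectors — split as products, and the total variation of a tensor product of probability measures is controlled by the telescoping bound above. The real work (invoking Lemma \ref{Ka}, controlling support by $Q(x,\xi)$, and establishing continuity at points of $\partial_\infty T$) has already been done in the tree case; the product step simply transports it factor by factor.
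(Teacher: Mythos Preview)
Your proof is correct and follows the same construction as the paper: take the tensor product of the single-tree measures and use the product decompositions of $\Rsph$, $\Csph$, and $Q(x,\xi)$. Your telescoping bound $\Vert\bigotimes\mu_i-\bigotimes\mu_i'\Vert\leqslant\sum_i\Vert\mu_i-\mu_i'\Vert$ is in fact the right estimate here; the paper writes this difference as a \emph{product} of the factor norms, which is a slip (take $l=2$ with $\mu_1=\mu_1'$ to see it fails), so your version is the cleaner one.
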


\begin{proof}
Let $x=(x_1,\dots,x_l)\in \Rsph(\Sigma)$ and $\xi=(\xi_1,\dots,\xi_l)\in\Csph(\Sigma)$. The above argument proves that there exist some maps $\lambda_n^i:\Rsph(T_i)\times\Csph(T_i)\to \PR(\Rsph(T_i))$ which satisfy the conditions of Proposition \ref{appt}. We can now define $\lambda_n(x,\xi)=\lambda_n^1(x_1,\xi_1)\otimes\dots\otimes\lambda_n^l(x_i,\xi_i)$.

As a product of continuous functions, $\lambda_n$ is continuous. Since $Q(x,\xi)=Q(x_1,\xi_1)\times\dots\times Q(x_l,\xi_l)$ and $\Supp(\lambda_n^i(x_i,\xi_i))\subset Q(x_i,\xi_i)$, we have $\Supp(\lambda_n(x,\xi))\subset Q(x,\xi)$. Furthermore, if $x'=(x'_1,\dots,x'_l)\in\Sigma$, we see that $\Vert\lambda_n(x,\xi)-\lambda_n(x',\xi)\Vert=\prod_{i=1}^l\Vert\lambda_n^i(x_i,\xi_i)-\lambda_n^i(x'_i,\xi_i)\Vert$,
  so  that 
$$\lim_{n\to +\infty} \Vert\lambda_n(x,\xi)-\lambda_n(x',\xi)\Vert=0.$$
 Since the convergence is uniform on each $\Csph(T_i)$, it is also uniform on $\Csph(\Sigma)$. Furthermore, since each $\lambda_n^i$ is $\Aut(T_i)$-equivariant, we see that $\lambda_n$ is $\Aut(T_1)\times\dots\times\Aut(T_l)$-equivariant. As the Coxeter group $W$ is included in $\Aut(T_1)\times\dots\times\Aut(T_l)$, this proves the lemma.
\end{proof}

\subsection{General Coxeter complexes}
In this section, we prove Proposition \ref{appt} for a general Coxeter complex $\Sigma$, associated with a Coxeter group $W$.

 We have seen in Section \ref{arbres} that $W$ acts on a product of trees $\widetilde\Sigma:=T_1\times\dots\times T_l$. Moreover, we know that there exists a finite index subgroup $W_0$ of $W$ that stabilizes each of the trees $T_i$. The argument above proves that there exists a continuous map $W_0$-equivariant map $\widetilde\lambda_n: \Rsph(\Sigma)\times\Csph(\Sigma)\to\PR(\Rsph(\widetilde\Sigma))$ for which  condition (ii) of Proposition \ref{appt} is satisfied, and such that $\Supp\widetilde\lambda_n(x,\xi)\subset \widetilde Q(x,\xi)$, where $\widetilde Q(x,\xi)$ is the combinatorial sector associated to $(x,\xi)$ in $\widetilde \Sigma$.

Since we want a measure on $\Rsph(\Sigma)$ and not $\Rsph(\tld \Sigma)$, we have to transform this measure into a measure whose support is really included in $Q(x,\xi)\subset \Rsph(\Sigma)$.

Let $Y$ be a convex subset of $\Rsph(\Sigma)$ -- convex meaning that $Y$ is the intersection of the roots that contain it. By Lemma \ref{lem:root}, to these roots are associated some roots of $\tld\Sigma$ by the application $\Psi$, and we define $\tld Y$ to be the intersection of the roots $\Psi(\alpha)$, for all the roots $\alpha$ containing $Y$. By construction, this set is a convex subset of $\Rsph(\tld \Sigma)$, which contains the image of $Y$ in $\Rsph(\tld \Sigma)$. Note also that $Y$, being a connected subset of $\Rsph(\Sigma)$ in the graph-theoretical sense, is also connected when viewed as a subset of $\Rsph(\tld\Sigma)$. Because of Proposition \ref{sectorroots}, the set $Q(x,\xi)$ is convex, and its associated set $\tld Q(x,\xi)$ is indeed the sector associated to $(x,\xi)$ in $\tld\Sigma$.

\begin{lemma}
Let $Y$ be a convex subset of $\Rsph(\Sigma)$. Assume that $Y$ is contained in some $Q(x,\xi)$. Then there exists an operator $S=S_Y\,:\, \ell^1(\tld Y)\to \ell^1(Y),$ such that:
\begin{enumerate}[(i)]
 \item The operator $S$ is continuous of norm $1$.
 \item If $Z\subset Y$ is a convex subset of $Y$, then $S_Z=S_Y|_{\ell^1(\tld Z)}$.
 \item If $f$ is a positive function of norm $1$, then $Sf$ is also a positive function of norm $1$.
 \item If $g\in W_0$, then $S_{gY}=gS_Y$.
\end{enumerate}
\end{lemma}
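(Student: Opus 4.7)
The plan is to construct $S_Y$ as a kernel operator induced by a canonical set-valued retraction $\tilde Y\to Y$. The four properties will then follow from corresponding properties of the retraction: (i) and (iii) from the fact that any pushforward of $\delta$-measures to probability measures is positive and preserves the unit sphere of positive functions; (iv) from the $W_0$-equivariance of $\Psi$ established in Lemma \ref{lem:root}; and (ii) from the intrinsic nature of the retraction with respect to the ambient convex set.

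First I would exploit the product structure: by the proof of Lemma \ref{lem:root}, every root of $\tld\Sigma$ has the form $T_1\times\cdots\times\alpha_i\times\cdots\times T_l$, so $\tld Y=\tld Y_1\times\cdots\times\tld Y_l$ with each factor $\tld Y_i$ a convex subtree of $T_i$. Via Lemma \ref{lem:embed1}, $\psi$ identifies $Y$ with a subset of this product, and the basic equivalence $\psi(y)\in\Psi(\alpha)\iff y\in\alpha$ follows from the identity $\Phi(\psi(R))=\Psi(\Phi(R))$. For each $\tld y\in\tld Y$, I would set $Y_{\tld y}$ to be the intersection in $\Sigma$ of all roots $\alpha$ with $\tld y\in\Psi(\alpha)$; this is convex in $\Sigma$, and the inclusion $Y_{\tld y}\subset Y$ is automatic because every root $\alpha\supset Y$ already contains $\tld y$ in $\Psi(\alpha)$. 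Defining $S_Y\delta_{\tld y}=|Y_{\tld y}|^{-1}\sum_{z\in Y_{\tld y}}\delta_z$ and extending by linearity, one obtains a bounded positive operator into $\ell^1(Y)$.

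Property (ii) is then transparent: $Y_{\tld y}$ depends only on $\tld y$ and the root system of $\Sigma$, not on the ambient convex set. Hence if $Z\subset Y$ is convex, then $\tld Z\subset\tld Y$ (since $\{\alpha\supset Y\}\subset\{\alpha\supset Z\}$), and for $\tld z\in\tld Z$ one has $Y_{\tld z}=Z_{\tld z}$, yielding $S_Y|_{\ell^1(\tld Z)}=S_Z$. Property (iv) reduces to the identity $gY_{\tld y}=Y_{g\tld y}$ for $g\in W_0$, which follows from the equivariance of $\Psi$.

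The main obstacle is verifying that $Y_{\tld y}$ is always nonempty and finite, so that $S_Y$ is well-defined with operator norm one. Nonemptiness amounts to the finite intersection property of the family of roots of $\Sigma$ pulled back via the condition $\tld y\in\Psi(\cdot)$, and is nontrivial for $\tld y\not\in\psi(\Sigma)$. I expect to address this by restricting attention to an apartment containing $Q(x,\xi)$ and using Lemma \ref{lem:embed2} together with the position data of Definition \ref{defposition}, which parameterize chambers of $Q(x,\xi)$ by integer tree-coordinates; if $Y_{\tld y}$ happens to be empty for some $\tld y$, one replaces it by the set of chambers of $Y$ whose coordinates minimally approximate those of $\tld y$ in the $\ell^1$ sense, a modification that is still intrinsic and therefore preserves (ii).
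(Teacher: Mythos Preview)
Your primary construction breaks down immediately: for a generic $\tilde y\in\tilde Y\setminus\psi(\Sigma)$ the set $Y_{\tilde y}$ is empty, not merely hard to control. Take $W=D_\infty$ with $W_0=\langle st\rangle$, so that $l=2$ and $\psi(\Sigma)$ sits in $T_1\times T_2\cong\Z\times\Z$ as the staircase $\{(a,b):a-b\in\{0,1\}\}$. For a chamber--chamber point $\tilde y=(a,b)$, the roots $\alpha$ with $\tilde y\in\Psi(\alpha)$ coming from the $T_1$--orbit of walls intersect to the pair $\{2a-1,2a\}\subset\Sigma$, while those from the $T_2$--orbit intersect to $\{2b,2b+1\}$; these two pairs meet precisely when $a-b\in\{0,1\}$, i.e.\ precisely when $\tilde y\in\psi(\Sigma)$. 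So the ``intersection of roots'' idea gives you nothing new: it simply recovers $\psi^{-1}(\tilde y)$ and is empty off the image.

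Your fallback then carries all the weight, but the justification you give for (ii) no longer applies to it. The set of $\ell^1$--closest chambers of $Y$ manifestly depends on $Y$, so the claim that it is ``intrinsic and therefore preserves (ii)'' is exactly what needs proof: you must show that whenever $\tilde y\in\tilde Z$, every $\ell^1$--closest chamber of $Y$ already lies in $Z$. This is not obvious (it would require, roughly, that crossing a wall separating $Z$ from $c$ strictly increases the tree--coordinate distance to $\tilde y$, which is a statement about how $\psi$ interacts with single reflections), and you have not argued it. Note also that in the $D_\infty$ example the $\ell^1$--closest set at $(n,0)$ has about $n$ elements, so even granting (ii) you would be averaging over sets of unbounded size.

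The paper avoids these difficulties by a completely different, explicit construction: it builds $S_Y$ inductively on the number of trees, at each step collapsing the mass outside the projection $p_1(Y)\subset T_1$ onto the two endpoints of that interval, then recursing on the slices $Y\cap p_1^{-1}(k)$. Property (ii) is then automatic because each one--dimensional collapse is determined by the interval $p_1(Y)$ alone, and (iv) holds because $W_0$ stabilises each tree and the endpoint--collapse is independent of the orientation of the interval.
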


\begin{proof}

The construction of $S$ is by induction on the number $l$ of trees. Assume first that $l=1$. Then, with the notations of Lemma \ref{lem:embed1} and Lemma \ref{lem:embed2}, we see that $\tld Q(x,\xi)=Q(\psi(x),\phi( \xi))$ is a half-line or a segment. Since $\tld Y$ is a connected, non-empty subset of $\tld Q(x,\xi)$, we can identify it (as a graph) to an integer interval of the form $[0,N)$ where $N\in \N\cup\{\infty\}$. By the above remarks, we see that $Y$ is then identified to a connected subgraph of $[0,N)$, and therefore is of the form $[n_1,n_2)$ where $n_1\geqslant 0$ and $n_2\leqslant N$.

If $f\in\ell^1(\tld Y)$, then let $Sf(n_1)=\sum_{k\leqslant n_1} f(k)$, $Sf(k)=f(k)$ if $k\in [n_1+1,n_2-1)$, and $Sf(n_2)=\sum_{n_2\leqslant k< N}f(k)$ if $n_2$ is finite.

Note that $S$ is independent of the choice of one of the two choices of ordering which may be possible on $\tld Y$, and depends only on the graph structure. Therefore, if $g\in \Aut(T_1)$, then we have  $gS_Y=S_{gY}$. The other properties of $S$ are quite obvious.

Now, if $l$ is general, then we know that $\tld Q(x, \xi)$ is a product of segments and half-lines. Since $\tld Y$ is a convex subset of $\tld Q(x, \xi)$, it is also isomorphic, as a graph, to a product of segments and half-lines. Hence it is isomorphic, as above, to a product $[0,N^1)\times [0,N^2)\times\dots\times [0,N^l)$. Let $p_i$, for $1\leqslant i\leqslant l$, be the projection of $\Rsph(\tld\Sigma)$ on the graph $\Rsph(T_i)$.

The projection $p_1(Y)$ is a non-empty connected subgraph of $[0,N^1)$, so it is of the form $[n^1_1,n^1_2)$. We first define an operator $R=R_Y\,:\, \ell^1(\tld Y)\to\ell^1([n^1_1,n^1_2)\times [0,N^2)\times\dots\times [0,N^l))$ by the following formulas, where $z\in [0,N^2)\times\dots\times [0,N_l)$:
$$Rf(n_1^1,z)=\sum_{k\leqslant n_1^1} f(k,z),$$
$$Rf(n_2^1,z)=\sum_{n_2^1\leqslant k< N^1} f(k,z)$$
 if $n_2$ is finite, and finally $Rf(k,z)=f(k,z)$ if $n_1^1<k<n_2^1-1$.

Once again, it is easy to see that $R$ satisfies the four points of the proposition.

Now, if $k\in [n_1^1,n_2^1)$, then by definition $Y\cap p_1^{-1}(k)\neq\varnothing$, and this set is connected, as it is the image of an intersection of roots in $\Sigma$. By induction we have some operators $S_{Y,k}:\ell^1(\{k\}\times [0,N^2)\times\dots\times [0,N_l))\to\ell^1(p^{-1}(k)\cap Y)$. Then we define: $S_Yf(k,z)=S_{Y,k}(R(f)(k,\cdot))(z)$.

If $Z$ is a convex subset of $Y$, then by induction we see that if $p^{-1}(k)\cap Z\neq\varnothing$, then $S_{Z,k}$ is equal to $S(Y,k)$ restricted to $\ell^1(p^{-1}(k)\cap \tld Z)$. So $S_Z$ is equal to the restriction of $S_Y$ to $\ell^1(\tld Z)$.

Let $g\in W_0$. Then $g$ stabilizes each tree, so that $g=g_1\times\dots\times g_l \in \Aut(T_1)\times\cdots\times \Aut(T_l)$. Then it is easy to see that $gR_y=R_{gY}$, and that, if $x_1$ is the point of $p_1(Y)\subset T_1$ indexed by $k$, then $gS_{Y,k}=gS_{Y\cap p^{-1}(x_1)}=S_{gY\cap p^{-1}(g_1x_1)}$ by induction, so that we have $gS_Y=S_{gY}$.
\end{proof}

\begin{proof}[Proof of Proposition \ref{appt}]For $(x,\xi)\in\Rsph(\Sigma)\times\Csph(\Sigma)$, let $\lambda_n(x,\xi)=S_{Q(x,\xi)}(\tld\lambda_n(x,\xi))$. By construction, $\lambda_n$ is $W_0$-equivariant, and $\Supp(\lambda_n(x,\xi))\subset Q(x,\xi)$.

It remains to check that 
$$\lim_{n\to\infty}\Vert\lambda_n(x,\xi) - \lambda_n(y,\xi)\Vert=0,$$
 uniformly in $\xi$. We know it is already the case for $\tld \lambda_n$. We also know by \cite[Proposition 2.30]{CL} that $Q(x,\xi)\cap Q(y,\xi)$ is a nonempty convex subset of $\Sigma$. So, we have that 
$$S_{Q(x,\xi)\cap Q(y,\xi)}=S_{Q(x,\xi)}|_{\ell^1(Q(x,\xi)\cap Q(y,\xi))}.$$ 

Now, \begin{multline}\label{eqnlambda}
\Vert\lambda_n(x,\xi) - \lambda_n(y,\xi)\Vert= \lambda_n(x,\xi)(Q(x,\xi)\setminus Q(y,\xi) )\\+ \lambda_n(y,\xi)(Q(y,\xi)\setminus Q(x,\xi))+\vert\lambda_n(x,\xi)-\lambda_n(y,\xi)\vert(Q(x,\xi)\cap Q(y,\xi)).
\end{multline}

The third term can be easily estimated: 
$$\vert\lambda_n(x,\xi)-\lambda_n(y,\xi)\vert(Q(x,\xi)\cap Q(y,\xi))= S_{Q(x,\xi)\cap Q(y,\xi)}(\vert\tld\lambda_n(x,\xi)-\tld\lambda_n(y,\xi)\vert)(Q(x,\xi)\cap Q(y,\xi)).$$ 
Since $S$ is continuous of norm $1$, we see that 
$$\vert\lambda_n(x,\xi)-\lambda_n(y,\xi)\vert(Q(x,\xi)\cap Q(y,\xi))\leqslant \vert\tld\lambda_n(x,\xi)-\tld\lambda_n(y,\xi)\vert(Q(x,\xi)\cap Q(y,\xi)),$$
 which converges to $0$ uniformly in $\xi$.

Now we estimate $\lambda_n(x,\xi)(Q(x,\xi)\setminus Q(y,\xi))=S_{Q(x,\xi)}(\tld\lambda_n(x,\xi))(Q(x,\xi)\setminus Q(y,\xi))$. Using the fact that

$$S_{Q(x,\xi)}\tld\lambda_n(x,\xi)=S_{Q(x,\xi)\cap Q(y,\xi)}(\tld\lambda_n(x,\xi)|_{Q(x,\xi)\cap Q(y,\xi)})+S_{Q(x,\xi)}(\tld\lambda_n(x,\xi)|_{Q(x,\xi)\setminus Q(y,\xi)}),$$
we see that $S_{Q(x,\xi)}(\tld\lambda_n(x,\xi)(Q(x,\xi)\setminus Q(y,\xi))\leqslant \tld\lambda_n(x,\xi)( Q(x,\xi)\setminus Q(y,\xi))$. 
Using the equation (\ref{eqnlambda}) with $\tld\lambda$ instead of $\lambda$, we have
$$\lim_{n\to+\infty}\tld\lambda_n(x,\xi)(Q(x,\xi)\setminus Q(y,\xi))=0,$$
so that $$\lim_{n\to+\infty}\lambda_n(x,\xi)(Q(x,\xi)\setminus Q(y,\xi))=0.$$

By symmetry, we have that $\lim_n \lambda_n(y,\xi)(Q(y,\xi)\setminus Q(x,\xi))=0$. Putting the pieces together, we have  $\lim_{n\to\infty}\Vert\lambda_n(x,\xi) - \lambda_n(y,\xi)\Vert=0$  uniformly in $\xi$.\\

We have also to check the continuity of $\lambda_n$. Let $\xi,\xi'\in\Csph(\Sigma)$. Since we already know that $\tld\lambda_n$ is continuous, we have to prove that if $\Vert \tld\lambda_n(x,\xi)-\tld\lambda_n(x,\xi')\Vert\leqslant \varepsilon$, then  $\Vert S_{Q(x,\xi)}\tld\lambda_n(x,\xi)-S_{Q(x,\xi')}\tld\lambda_n(x,\xi')\Vert$ is also small. 
We use the same kind of arguments: since we have $\Vert \tld\lambda_n(x,\xi)-\tld\lambda_n(x,\xi')\Vert\leqslant \varepsilon$, we know that $\tld\lambda_n(x,\xi)|_{\tld Q(x,\xi)\setminus \tld Q(x,\xi')}\leqslant\varepsilon$, and therefore $S_{Q(x,\xi)}(\tld\lambda_n(x,\xi)|_{Q(x,\xi)\cap Q(x,\xi')})\leqslant \varepsilon$.
The same thing is of course valid for $\lambda_n(x,\xi')$.

Furthermore, we see that $\vert\tld\lambda_n(x,\xi)-\tld\lambda_n(x,\xi)\vert(Q(x,\xi)\cap Q(x,\xi'))\leqslant \varepsilon$, so that 
$$\Vert S_{Q(x,\xi)\cap Q(x,\xi')}(\lambda_n(x,\xi)|_{Q(x,\xi)\cap Q(x,\xi')}-\lambda_n(x,\xi')|_{Q(x,\xi)\cap Q(x,\xi')}
)\Vert\leqslant\varepsilon,$$

so finally $\Vert\lambda_n(x,\xi)-\lambda_n(x,\xi')\Vert\leqslant 3\varepsilon$, which proves the continuity of $\lambda_n$ in $\xi$, and hence the continuity of $\lambda_n$ since $\Rsph(\Sigma)$ is discrete.\\

The only thing left to do is to force $\lambda_n$ to be $W$-equivariant instead of $W_0$-equivariant. To do so, define  $$\mu_n(x,\xi)=\frac {|W_0|}{|W|} \sum_{w\in W/W_0} w\lambda_n(w^{-1}x,w^{-1}\xi).$$

As $\lambda$ is $W_0$-equivariant, $w\lambda_n(w^{-1}x,w^{-1}\xi)$ does not depend of the choice of $w$ in a class of $W/W_0$, so that $\mu_n$ is well-defined. Moreover, it is easy to check that $\mu_n$ is $W$-equivariant. 
We also have that
$$\lim_{n\to+\infty}\Vert\lambda_n(w^{-1}x,w^{-1}\xi)-\lambda_n(w^{-1}x',w^{-1}\xi)\Vert= 0$$ 
uniformly in $\xi$, so $\lim_n\Vert w.\lambda_n(w^{-1}x,w^{-1}\xi)-w.\lambda_n(w^{-1}x',w^{-1}\xi)\Vert=0$, and in the end $$\lim_{n\to\infty}\Vert\mu_n(x,\xi)-\mu_n(x',\xi)\Vert=0$$ 
uniformly in $\xi$.
Finally, we have 
$$\Supp(w\mu_n(w^{-1}x,w^{-1}\xi))\subset wQ(w^{-1}x,w^{-1}\xi)=Q(x,\xi),$$
 which proves (i).
\end{proof}

\begin{remark}\label{position} Let $y$ be a spherical residue in $Q(x,\xi)$, and let $\widetilde y =(y_1,\dots,y_l)$ be its image in $\Rsph(T_1)\times\dots\times \Rsph(T_l)$. Note that $\mu_n(x,\xi)(y)$ only depends on the position of $y$ towards $(x,\xi)$. Indeed, in each tree $T_i$, the value $\lambda_n^i(x_i,\xi_i)(y_i)$ only depends on the distance between $x_i$ and $y_i$. Consequently, $\widetilde\lambda_n(x,\xi)(\widetilde y)$ only depends on the position (defined in Definition \ref{defposition}) of $y$ towards $(x,\xi)$. Then it is also the case of $S(x,\xi)\tld\lambda_n(x,\xi)(y)$. Finally, $w\lambda_n(w^{-1}x,w^{-1}\xi)(\widetilde y)$ only depends on the position of $w^{-1}y$ towards $(w^{-1}x,w^{-1}\xi)$, that is on the position of $y$ towards $(x,\xi)$.
\end{remark}

\section{Amenability for buildings}\label{section:amenImm}
In this section, $X$ is a  building, $W$ its Weyl group and $G$ its automorphism group.

\begin{theorem}\label{immeuble}
There exists a sequence of continuous $G$-equivariant maps $$\mu_n:\Rsph(X)\times \Csph(X)\to \PR(\Rsph(X))$$ such that $$\lim_{n\to+\infty}\Vert\mu_n(x,\xi)-\mu_n(y,\xi)\Vert=0$$
 uniformly on $\Csph(X)$, for all spherical residues $x$ and $y$.
\end{theorem}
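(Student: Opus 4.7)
The plan is to construct the maps $\mu_n$ by transporting the apartment-level measures $\lambda_n$ from Proposition \ref{appt} to the building via the $W$-isometry $Q(x,\xi) \hookrightarrow \Sigma$ which each combinatorial sector admits. For $(x,\xi) \in \Rsph(X) \times \Csph(X)$, Proposition \ref{sectorroots} together with Theorem \ref{lemTits} places $Q(x,\xi)$ inside some apartment $A$ of $X$, and any $W$-isometry $\iota\colon A \to \Sigma$ taking $x$ to $1_W$ determines a unique limit point $\xi_\iota := \lim \iota(x_k) \in \Csph(\Sigma)$, for any sequence $(x_k) \subset Q(x,\xi)$ with $x_k \to \xi$. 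I would then define
$$\mu_n(x,\xi) := \iota^{-1}_* \lambda_n(1_W,\xi_\iota),$$
a probability measure supported on $Q(x,\xi) \subset \Rsph(X)$.

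The formal properties are all handled by the intrinsic character of Remark \ref{position}: the weight $\lambda_n(1_W,\xi_\iota)(y)$ depends only on the position of $y$ towards $(1_W,\xi_\iota)$, a combinatorial invariant of the pointed sector. Thus $\mu_n(x,\xi)$ does not depend on the choice of $A$ or $\iota$, and because any automorphism of $X$ carries sectors to sectors while preserving positions, $G$-equivariance is automatic. Continuity in $\xi$ follows from the pointwise convergence $Q(x,\xi_m) \to Q(x,\xi)$ provided by Proposition \ref{induction}, combined with the discreteness of $\Rsph(X)$.

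The main work lies in the asymptotic invariance. Mimicking the end of the proof of Proposition \ref{appt}, I would decompose
$$\|\mu_n(x,\xi) - \mu_n(y,\xi)\| = \mu_n(x,\xi)(Q(x,\xi) \setminus Q(y,\xi)) + \mu_n(y,\xi)(Q(y,\xi) \setminus Q(x,\xi)) + |\mu_n(x,\xi) - \mu_n(y,\xi)|(Q(x,\xi) \cap Q(y,\xi)).$$
By Proposition \ref{intersect} the intersection is nonempty, and for any $z$ lying in it, Proposition \ref{sectorroots} (via inclusion of root sets) yields $Q(z,\xi) \subseteq Q(x,\xi) \cap Q(y,\xi)$, so the intersection actually contains a full sub-sector. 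Since $Q(z,\xi)$ sits inside any apartment $A_x$ containing $Q(x,\xi)$, both $\mu_n(x,\xi)$ and $\mu_n(z,\xi)$ may be computed via a single $W$-isometry $A_x \cong \Sigma$, reducing $\|\mu_n(x,\xi) - \mu_n(z,\xi)\|$ to the Coxeter-complex bound of Proposition \ref{appt}; a symmetric computation in an apartment $A_y$ containing $Q(y,\xi)$ controls $\|\mu_n(y,\xi) - \mu_n(z,\xi)\|$, and the triangle inequality then closes the estimate.

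The principal obstacle is the uniformity in $\xi$: the auxiliary residue $z$, and therefore the distances $d(x,z), d(y,z)$ entering the sandwich estimate of Lemma \ref{Ka}, a priori depend on $\xi$. I would resolve this by choosing $z$ to be a combinatorial projection of $y$ onto $Q(x,\xi)$, or equivalently a minimizer of $d(x,\cdot) + d(y,\cdot)$ on the intersection $Q(x,\xi) \cap Q(y,\xi)$; convexity of sectors and the fact that $Q(x,\xi)$ and $Q(y,\xi)$ share the asymptotic direction $\xi$ should force $d(x,z), d(y,z) \leqslant d(x,y) + C$ for a constant $C$ independent of $\xi$. Feeding this uniform bound into the explicit sandwich estimate handles the intersection term, while the two complementary masses $\mu_n(x,\xi)(Q(x,\xi) \setminus Q(y,\xi))$ and its symmetric counterpart are of order $O(d(x,y)/n)$ since $\mu_n(x,\xi)$ concentrates on annuli of radius roughly $n$ around $x$, whereas $Q(x,\xi) \setminus Q(y,\xi) \subseteq Q(x,\xi) \setminus Q(z,\xi)$ lies within bounded distance of $x$. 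This gives the required uniform convergence.
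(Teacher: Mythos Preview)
Your construction of $\mu_n$ via a normalised $W$-isometry $\iota\colon A\to\Sigma$ is equivalent to the paper's, which instead fixes a globally coherent system of trees on all apartments transported by retractions; Remark~\ref{position} is exactly what makes the two descriptions agree, and your treatment of well-definedness, equivariance and continuity is correct.

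The gap is precisely where you locate it, and it is genuine. Two points:
\begin{enumerate}
\item The claim that a suitable $z\in Q(x,\xi)\cap Q(y,\xi)$ satisfies $d(x,z),d(y,z)\leqslant d(x,y)+C$ uniformly in $\xi$ is asserted (``should force'') but not proved. It can be made true if one takes $z$ to be the gate of $y$ in the convex set $Q(x,\xi)$ and then verifies that this gate lies in $Q(y,\xi)$; but neither the gate property for sectors nor that inclusion is established anywhere in the paper, so this is real missing work, not a routine detail.
\item The assertion that $Q(x,\xi)\setminus Q(z,\xi)$ ``lies within bounded distance of $x$'' is false. For $W=D_\infty\times D_\infty$, $x=(0,0)$, $z=(1,0)$ and $\xi=(+\infty,+\infty)$ one has $Q(x,\xi)\setminus Q(z,\xi)=\{0\}\times\Z_{\geqslant 0}$, an unbounded half-line, even though $d(x,z)=1$. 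So the mass estimate $O(d(x,y)/n)$ for the complementary terms cannot be obtained by that reasoning.
\end{enumerate}

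The paper bypasses both difficulties with a single device: the retraction $\rho$ onto an apartment $A\supset Q(x,\xi)$ centred at $x$. Setting $y':=\rho(y)\in A$, one has $d(x,y')=d(x,y)$ automatically (retractions preserve $W$-distance from the centre), $\rho$ fixes $\xi$ (a sequence in $A\cap A'$ converging to $\xi$ is fixed pointwise), and one checks $Q(x,\xi)\cap Q(y,\xi)=Q(x,\xi)\cap Q(y',\xi)$. Each of the three pieces of your decomposition is then bounded by the corresponding piece of $\Vert\lambda_n^A(x,\xi)-\lambda_n^A(y',\xi)\Vert$, and Proposition~\ref{appt} applies in $A$ to the pair $(x,y')$, whose distance is \emph{independent of $\xi$}. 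The retraction thus replaces your hoped-for uniform bound on $d(x,z)$ by the trivial identity $d(x,y')=d(x,y)$, and no control on the shape of $Q(x,\xi)\setminus Q(y,\xi)$ is ever needed.
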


\begin{proof}
 \vspace{.5cm}
{\bf Construction of $\mu_n$}.
Let $A_0$ be an apartment. We choose some trees $T_1,\dots,T_l$ as in Section \ref{arbres}, so that $A$ is embedded into the product $T_1\times\dots\times T_l$. Let $A$ be another apartment. Assume first that $A_0\cap A$ contains a chamber $x$. Let $\rho$ be the retraction onto $A_0$ centered at $x$. Then $\rho_{A'}$ is an isomorphism, and we carry the choice of trees for $A_0$ into a choice of trees for $A$ \textit{via} the retraction $\rho$. This choice of trees does not depend on the choice of $x$ in $A\cap A_0$.

If $A'$ is another apartment, we know that for any $x\in A'$, $y\in A_0$, there is an apartment $A$ containing $x$ and $y$. We can carry in the same way the choice of trees we made on $A$ to $A'$, using the retraction $\rho_{x,A}$. Again, the choice we make does not depend on the choice of $x\in A'\cap A$ and $y\in A_0\cap A$. Furthermore, it does not depend on the choice of $A$ either. Indeed, let $A''$ be another apartment. Assume first that $A''\cap A$ contains some chamber $c$. Then we have a commutative diagram:

 $$\xymatrix @!0 @C=4pc @R=2pc{ & A \ar[dd]  \ar[rd] \\ A' \ar[ru] \ar[rd] & & A_0,\\ & A''\ar[ru] }$$

where all the arrows are given by restriction centered at a chamber in the intersection of the apartments. It follows that the choice of trees given by $A'$ or $A''$ is the same. Now, if $A''\cap A$ is empty, then there is an apartment $A'''$ containing some chamber $x\in A'\cap A$ and $y\in A_0\cap A''$. In view of the discussion above, the choice of trees in $A'$ given by $A'''$ is the same as the one given by $A''$, and also the same as the one given by $A$. Hence $A''$ and $A$ give the same choice of trees, which proves that this choice only depends on the initial choice of trees we made on $A_0$.


From an apartment $A$ endowed with a choice of trees $T_1,\dots,T_l$, we get a continuous map $\lambda_n^A:\Rsph(A)\times \Csph(A)\to \PR(\Rsph(A))$, as in Proposition \ref{appt}. Furthermore, if some apartments $A$ and $A'$ are such that there exists $x\in X$ with $Q(x,\xi)\subset A\cap A'$, in restriction to $Q(x,\xi)$, the trees we have chosen for $A$ and $A'$ are the same. More precisely, the positions of a residue $y$ in $Q(x,\xi)$ towards $(x,\xi)$ are the same, whether they are determined in $A$ or in $A'$. Moreover, by remark \ref{position}, $\lambda_n(x,\xi)(B)$ only depends on the position of chambers of $B$ in each of the trees $T_1,\dots, T_l$ towards $(x,\xi)$. Consequently, if $B$ is a subset of $X$, we can see that $\lambda_n^A(B\cap Q(x,\xi))$ does not depend on the choice of the  $A$  containing $Q(x,\xi)$.

 Thus, we can define $\mu_n(x,\xi)$ by $\mu_n(x,\xi)(B)=\lambda_n^A(x,\xi)(B\cap Q(x,\xi))$, for any apartment $A$ containing $Q(x,\xi)$. 

\vspace{.5cm}
{\bf Equivariance of $\mu_n$}. 

Let $g\in G$ and let $F$ be a subset of $\Rsph(X)$. Let $x\in\Rsph(X)$ and $\xi\in\Csph(X)$. Let $A$ be an apartment containing $Q(x,\xi)$. We have $g.\mu_n(x,\xi)(F)=\lambda_n^A(x,\xi)(g^{-1}F\cap Q(x,\xi))=g.\lambda_n^A(x,\xi)(F\cap Q(gx,g\xi))$. But $g.\lambda_n^A(x,\xi)$ is precisely the measure supported on $Q(gx,g\xi)$ defined by Proposition \ref{appt}, in the apartment $gA$, with respect to the trees that are the images by $g$ of the trees we have chosen in $A$. Thus, all we have to do is to prove that the choice of trees we made on the apartments of $X$ is invariant with respect to $G$.
Let $A_1=gA_2$ and $A_2$ be an apartment such that each of the intersections $A_2\cap A_1$ and $A_2\cap A_0$ contain a chamber. Since $G$ is type-preserving, there exists an element $w\in W$ such that the following diagram is commutative:

$$
\xymatrix {A_0 \ar[r]^w \ar[d]_g& A_0\ar[d]^\rho \\ A_1 &A_2,\ar[l]^{\rho'} }
$$
where $\rho$ is the retraction onto $A_2$ centered at any chamber in any chamber in $A_2\cap A_0$, and $\rho'$ is the retraction onto $A_1$ centered at a chamber in $A_1\cap A_2$. Hence the images of the trees in $A_0$ by $g$ are the same as the images of the trees by $\rho'\circ\rho\circ w$, which are, by definition, the trees in $A_1$. Thus the system of trees in the apartments of $X$ is $G$-invariant.

Consequently, the measure $g.\lambda_n^A(x,\xi)$ is equal to $\lambda_n^{gA}(gA,g\xi)$, and we get $g.\mu_n(x,\xi)(F)=\mu_n(gx,g\xi)(F)$.

\vspace{.5cm}
{\bf Limit of $\Vert\mu_n(x,\xi)-\mu_n(y,\xi)\Vert$}.

 Let $x$ and $y$ be chambers in $X$. We want to estimate $\Vert\mu_n(x,\xi)-\mu_n(y,\xi)\Vert$. The measure $|\mu_n(x,\xi)-\mu_n(y,\xi)|$ is supported in $Q(x,\xi)\cup Q(y,\xi)$. This set can be divided into three disjoint parts: $Q(x,\xi)\setminus Q(y,\xi)$, $Q(x,\xi)\cap Q(y,\xi)$ and $Q(y,\xi)\setminus Q(z,\xi)$. As the support of $\mu_n(x,\xi)$ (respectively of $\mu_n(y,\xi)$) is included in $Q(x,\xi)$ (resp. $Q(y,\xi)$), we have:

\begin{multline*}
\Vert\mu_n(x,\xi)-\mu_n(y,\xi)\Vert=\mu_n(x,\xi)(Q(x,\xi)\setminus Q(y,\xi))+|\mu_n(x,\xi)-\mu_n(y,\xi)|(Q(x,\xi)\cap Q(y,\xi))\\
+\mu_n(y,\xi)(Q(y,\xi)\setminus Q(x,\xi)).
\end{multline*}
 
 We prove that all these three terms tend to $0$.

 Let $A$ be an apartment containing $Q(x,\xi)$ and $A'$ an apartment containing $Q(y,\xi)$. We know by Proposition \ref{intersect} that there exists a chamber in $A\cap A'$.
 Let $\rho$ be the retraction onto $A$ centered at $x$. As there exists a sequence of chambers converging to $\xi$ and contained in $A\cap A'$, we see that  $\rho$ also fixes $\xi$. Let $y'=\rho(y)$. 
By definition of the trees in $A'$, we know that $\rho.\lambda_n^{A'}(y,\xi)=\lambda_n^{A}(\rho(y),\xi)$.

 Since $Q(x,\xi)\cap Q(y,\xi)\subset A\cap A'$, we see that $Q(x,\xi)\cap Q(y,\xi)=Q(x,\xi)\cap Q(y',\xi)$. 
We deduce that $Q(x,\xi)\setminus Q(y,\xi)=Q(x,\xi)\setminus Q(y',\xi)$. 
Hence we have 
$$\mu_n(x,\xi)(Q(x,\xi)\setminus Q(y,\xi))=\lambda_n^{A}(x,\xi)(Q(x,\xi)\setminus Q(y,\xi))=\lambda_n^{A}(Q(x,\xi)\setminus Q(y',\xi)).$$

 Now, in the same way as above, $Q(x,\xi)\cup Q(y',\xi)$ can be divided into three parts, and we can write:

\begin{multline}\label{decomp}
\Vert\lambda^A_n(x,\xi)-\lambda^A_n(y',\xi)\Vert=\lambda^A_n(x,\xi)(Q(x,\xi)\setminus Q(y',\xi))\\+|\lambda^A_n(x,\xi)-\lambda^A_n(y',\xi)|(Q(x,\xi)\cap Q(y',\xi))
+\lambda^A_n(y',\xi)(Q(y',\xi)\setminus Q(x,\xi)).
\end{multline}
 
 Consequently, we have
$$\lambda^A_n(x,\xi)(Q(x,\xi)\setminus Q(y',\xi))\leqslant \Vert\lambda^A_n(x,\xi)-\lambda^A_n(y',\xi)\Vert.$$
 As this quantity tends to $0$ uniformly in $\xi$, there is a sequence of numbers $\eps_n$ only depending on $x$ and $y$ such that $\Vert\lambda^A_n(x,\xi)-\lambda^A_n(y',\xi)\Vert\leqslant \eps_n$ and $\lim_n\eps_n=0$. Thus, we have $\mu_n(x,\xi)(Q(x,\xi)\setminus Q(y,\xi))\leqslant\eps_n$.
 
 By switching $x$ and $y$, we see that we have also  $\mu_n(y,\xi)(Q(y,\xi)\setminus Q(x,\xi))\leqslant\eps_n$. So, only the second term is left, that is $|\mu_n(x,\xi)-\mu_n(y,\xi)|(Q(x,\xi)\cap Q(y,\xi))$. As $Q(x,\xi)\cap Q(y,\xi)\subset A\cap A'$, we can write $$|\mu_n(x,\xi)-\mu_n(y,\xi)|(Q(x,\xi)\cap Q(y,\xi))=|\lambda^A_n(x,\xi)-\lambda^{A'}_n(y,\xi)|(Q(x,\xi)\cap Q(y,\xi)). $$
 
 As before, this term is better evaluated in $A\cap A'$. More precisely, we have:
 
 \begin{eqnarray*}
 |\lambda^A_n(x,\xi)-\lambda^{A'}_n(y,\xi)|(Q(x,\xi)\cap Q(y,\xi))
&=&|\lambda^A_n(x,\xi)-\lambda^{A'}_n(y,\xi)|(\rho^{-1}(Q(x,\xi)\cap Q(y,\xi)))\\
 &=&|\rho.\lambda^A_n(x,\xi)-\rho.\lambda^{A'}_n(y,\xi)|(Q(x,\xi)\cap Q(y,\xi))\\
 &=&|\lambda^A_n(x,\xi)-\lambda^A_n(y',\xi)|(Q(x,\xi)\cap Q(y,\xi)),
 \end{eqnarray*}
 since $\rho$ fixes $x$ and $\xi$.
 
As we have already seen, we have $Q(x,\xi)\cap Q(y,\xi)=Q(x,\xi)\cap Q(y',\xi)$. Consequently, the last line is equal to
 $|\lambda^A_n(x,\xi)-\lambda^A_n(y',\xi)|(Q(x,\xi)\cap Q(y',\xi))$, which is also less that $\eps_n$ by \ref{decomp}.

Consequently, we have $\Vert\mu_n(x,\xi)-\mu_n(y,\xi)\Vert\leqslant 3\eps_n$, thus $\Vert\mu_n(x,\xi)-\mu_n(y,\xi)\Vert$ tends to $0$ uniformly in $\xi$.

\vspace{.5cm}
{\bf Continuity of $\mu_n$}. 
As  $\Rsph(X)$ is a discrete set, we have to prove that $\mu_n$ is continuous with respect to its second argument.

By Proposition \ref{induction} , we see that if $\xi_k,\xi\in\Csph(X)$ are such that $\lim_k\xi_k=\xi$, then $\lim_k Q(x,\xi_k)=Q(x,\xi)$, with the pointwise topology.

Let $F$ be a finite subset of $\Rsph(X)$. For $n$ large enough, we have seen that $Q(x,\xi_n)\cap F=Q(x,\xi)\cap F$. Moreover, 
 $\mu_n(x,\xi_k)(F)=\lambda_n(x,\xi_k)(F\cap Q(x,\xi_k))$. Let $A_k$ be an apartment containing $Q(x,\xi_k)$ and $A$ an apartment containing $Q(x,\xi)$. By Remark \ref{position}, $\lambda_n(x,\xi_k)(F\cap Q(x,\xi_k)$ (resp.  $\lambda_n(x,\xi)(F\cap Q(x,\xi))$) only depends on the position of the residues in $F\cap Q(x,\xi)$ towards $x$ in the trees chosen relatively to $A_k$ (resp. to $A$). Consequently, these two quantities are equal, in other words, $\mu_n(x,\xi_k)(F)=\mu_n(x,\xi)(F)$. 

Hence we have $\lim_{k\to +\infty}\mu_n(x,\xi_k)=\mu_n(x,\xi)$ for the pointwise topology. But, in restriction to $\PR(\Rsph(X))$, the pointwise topology and the norm topology are the same. So $\mu_n$ is continuous with respect to its second variable, and hence is continuous, since $\Rsph(X)$ is discrete.
\end{proof}

\begin{corollary}\label{fin}
Let $X$ be any building, and $\Gamma$ a locally compact subgroup of $\Aut(X)$ such that stabilisers in $\Gamma$ of spherical residues in $X$ are compact. Then the action of $\Gamma$ on the combinatorial compactification of $X$ is amenable.
\end{corollary}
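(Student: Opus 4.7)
The plan is to derive this corollary almost immediately from Theorem \ref{immeuble} by invoking Corollary \ref{cor:moyennable}. Since the action of $\Aut(X)$ on $X$ is type-preserving and the maps of Theorem \ref{immeuble} are $\Aut(X)$-equivariant, their restriction to $\Gamma$ yields a sequence of continuous $\Gamma$-equivariant maps
\[
\mu_n : \Rsph(X)\times\Csph(X)\to\PR(\Rsph(X))
\]
with $\lim_{n\to\infty}\|\mu_n(x,\xi)-\mu_n(y,\xi)\|=0$ uniformly on $\Csph(X)$ for every pair $x,y\in\Rsph(X)$. So the hypotheses of Corollary \ref{cor:moyennable} concerning the maps are already satisfied.

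The only point to verify is that $\Gamma$ acts properly on $\Rsph(X)$. Since $\Rsph(X)$ is discrete and countable (countability being clear once one observes that apartments are countable and any spherical residue lies in an apartment), and since $\Gamma$ is locally compact acting continuously, properness of the action on $\Rsph(X)$ is equivalent to the compactness of pointwise stabilisers; this is exactly the hypothesis. Thus all the assumptions of Corollary \ref{cor:moyennable} hold.

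Applying Corollary \ref{cor:moyennable} directly, we conclude that the action of $\Gamma$ on $\Csph(X)$ is topologically amenable in the sense of Definition \ref{critere}.

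The proof is essentially a matter of matching hypotheses; there is no real obstacle once Theorem \ref{immeuble} is established. The one subtlety worth flagging explicitly is that, strictly speaking, Corollary \ref{cor:moyennable} is stated for a subgroup $G$ acting properly on $\Rsph(X)$, and we must make sure that the statement remains valid when we replace $G$ by our $\Gamma$ and that $\Gamma$-equivariance of $\mu_n$ is inherited from $\Aut(X)$-equivariance \textemdash\ both are immediate.
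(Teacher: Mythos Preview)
Your proof is correct and matches the paper's approach exactly: Theorem \ref{immeuble} supplies the maps $\mu_n$, and Corollary \ref{cor:moyennable} finishes the argument. (One minor quibble: your justification of countability of $\Rsph(X)$ via ``apartments are countable and every residue lies in an apartment'' is not sound, since there may be uncountably many apartments; but this point is inessential here, and the paper's own two-sentence proof does not address it either.)
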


\begin{proof} We know there are some maps $\mu_n$ as in Theorem \ref{immeuble}. The result follows from Corollary \ref{cor:moyennable}.
\end{proof}

Note that the condition that stabilisers of spherical residues are compact is equivalent to the properness of the action of $\Gamma$ on the Davis-Moussong realisation of $X$, since the stabiliser of a point is the stabiliser of the minimal facet containing it, which corresponds to a spherical residue.

\bibliographystyle{alpha}
\bibliography{bibmoyenn}
\end{document}